\documentclass[12pt,reqno]{amsart}

\usepackage{amssymb,longtable}

\textwidth=14.9cm \textheight=23cm \topmargin=0cm
\oddsidemargin=0.5cm \evensidemargin=0.5cm

\newcommand{\e}{\varepsilon}

\newcommand{\LL}{\mathcal{L}}
\newcommand{\M}{\mathcal{M}}
\newcommand{\la}{\lambda}
\newcommand{\al}{\alpha}

\newcommand{\fy}{\varphi}

\newcommand{\p}{\partial}
\newcommand{\I}{\infty}
\newcommand{\wt}[1]{\widetilde {#1}}
\newcommand{\ti}{\tilde}
\newcommand{\R}{\mathbb{R}}

\newcommand{\C}{\mathbb{C}}

\newcommand{\Z}{\mathbb{Z}}

\newcommand{\cS}{\mathcal{S}}
\renewcommand{\Re}{\mathop{\mathrm{Re}}}
\renewcommand{\Im}{\mathop{\mathrm{Im}}}
\renewcommand{\bar}{\overline}

\newcommand{\UU}{\mathcal{U}}
\newcommand{\Sg}{\mathfrak{S}}

\numberwithin{equation}{section}

\newtheorem{thm}{Theorem}[section]
\newtheorem{cor}[thm]{Corollary}
\newtheorem{lem}[thm]{Lemma}
\newtheorem{prop}[thm]{Proposition}

\theoremstyle{remark}
\newtheorem{rem}{Remark}[section]
\newtheorem{defn}{Definition}[section]

\newcommand{\ran}{\rangle}
\newcommand{\lan}{\langle}

\newcommand{\lec}{\lesssim}
\newcommand{\gec}{\gtrsim}

\newcommand{\EQ}[1]{\begin{equation} \begin{split} #1 \end{split} \end{equation}}
\setlength{\marginparwidth}{2cm}

\newcommand{\Del}[1]{}
\newcommand{\CAS}[1]{\begin{cases} #1 \end{cases}}
\newcommand{\pt}{&}
\newcommand{\pr}{\\ &}
\newcommand{\pq}{\quad}
\newcommand{\pn}{}

\newcommand{\prQ}{\\ &\qquad}

\newcommand{\prQQ}{\\ &\qquad\qquad}

\newcommand{\LR}[1]{{\lan #1 \ran}}
\newcommand{\de}{\delta}
\newcommand{\si}{\sigma}

\newcommand{\be}{\beta}
\newcommand{\ka}{\kappa}
\newcommand{\ga}{\gamma}

\newcommand{\na}{\nabla}

\renewcommand{\th}{\theta}

\newcommand{\B}{\mathcal{B}}
\newcommand{\De}{\Delta}
\newcommand{\Om}{\Omega}
\newcommand{\Ga}{\Gamma}

\newcommand{\Cu}{\bigcup}

\renewcommand{\H}{\mathcal{H}}

\newcommand{\GZ}{\mathfrak{G}}
\newcommand{\sg}{\mathfrak{s}}
\newcommand{\HH}{\mathcal{H}}

\def\dist{\mathrm{dist}}
\def\calH{{\mathcal H}}
\def\Rw{\binom{w}{\bar{w}}}
\def\Rv{\binom{v}{\bar{v}}}
\def\nn{\nonumber}
\def\Wdisp{W_{\mathrm{disp}}}
\def\Wroot{W_{\mathrm{root}}}
\def\Preal{P_{\mathrm{real}}}
\def\tor{{\mathbb T}}

\def\eps{\varepsilon}
\def\sign{\mathrm{sign}}

\def\const{\mathrm{const}}

\def\cN{{\mathcal N}}
\def\err{{\mathrm{err}}}
\def\T{{\mathcal T}}
\def\Id{\mathrm{Id}}


\author{K.~Nakanishi}
\address{Department of Mathematics, Kyoto University\\ Kyoto 606-8502, Japan}
\email{n-kenji@math.kyoto-u.ac.jp}

\author{W.~Schlag}
\address{Department of  Mathematics, The University of Chicago\\ Chicago, IL 60615, U.S.A.} 
\email{schlag@math.uchicago.edu}

\title[Global NLS dynamics above the ground state energy]{Global dynamics above the ground state energy\\ for the  cubic NLS equation in 3D}


\begin{document}



\subjclass[2010]{35L70, 35Q55} 
\keywords{nonlinear Schr\"odinger equation, ground state, hyperbolic dynamics, stable manifold, unstable manifold, scattering theory, blow up}

\begin{abstract}
We extend the result in \cite{NakS} on the nonlinear Klein-Gordon equation to the nonlinear Schr\"odinger equation
with the focusing cubic nonlinearity in three dimensions, for radial data of energy at most slightly above that of the ground state.
We prove that the initial data set splits into nine nonempty, pairwise disjoint regions which are characterized
by the distinct behaviors of the solution for large time: blow-up, scattering to $0$, or scattering to the family of ground states generated by the phase and scaling freedom. 
Solutions of this latter type form 
a smooth center-stable manifold, which contains the ground states
and separates the phase space locally into two connected regions exhibiting 
blow-up and scattering to $0$, respectively. 
The special solutions found by Duyckaerts, Roudenko~\cite{DuyR}, following the seminal work on threshold solutions by Duyckaerts, Merle~\cite{DM1},  appear here as the unique one-dimensional unstable/stable 
manifolds emanating from the ground states. In analogy with~\cite{NakS}, the proof combines the hyperbolic dynamics near the ground states with
the variational structure away from them. 
The main technical ingredient in the proof is a ``one-pass'' theorem which precludes ``almost homoclinic orbits", i.e., those solutions starting in, then moving away from, and finally returning to, a small neighborhood of the ground states. 
The main new difficulty compared with the Klein-Gordon case is the lack of finite propagation speed. We need the radial Sobolev inequality for the error estimate in the virial argument. 
Another major difference between \cite{NakS} and this paper is the need to control two modulation parameters. 
\end{abstract}

\maketitle

\tableofcontents

\section{Introduction}\label{sec:intro}

The local well-posedness of the cubic NLS equation
\EQ{\label{eq:Schr3}
i\p_{t} u - \Delta u = |u|^{2} u
}
in the energy space $H^{1}$ is classical, see Strauss~\cite{Strauss}, Sulem, Sulem~\cite{SS99}, Cazenave~\cite{Caz}, and Tao~\cite{Tao}.  
One has mass and energy conservation 
\EQ{\label{eq:MundE}
M(u) &= \frac12\| u\|_{2}^{2} = \const.\\
E(u) &= \frac12\|\nabla u\|_{2}^{2} - \frac14 \|u\|_{4}^{4}=\const.,
}
where $\|\cdot\|_p$ denotes the $L^p(\R^3)$ norm. 
Data with small $H^{1}$ 
norm have globally defined solutions which scatter to a free wave. 
In the defocusing case it is known that all energy solutions scatter to zero, see Ginibre, Velo~\cite{GVelo1}, \cite{GVelo2}. 
In contrast,  \eqref{eq:Schr3} is known to exhibit energy data for which the solutions blow up in finite time. In fact, Glassey~\cite{Glassey}
proved that all data of negative energy are of this type provided they also have finite variance. The latter assumption was later removed
in the radial case by Ogawa, Tsutsumi~\cite{OgTs}. 

Eq.~\eqref{eq:Schr3} possesses a family of special oscillatory solutions of the form $u(t,x)=e^{-it\alpha^{2}+i\th} Q(x,\alpha)$ where $\alpha>0$ and 
\[
-\Delta Q(\cdot,\alpha) + \al^{2} Q(\cdot,\alpha) = |Q|^{2}Q(\cdot,\alpha)
\]
There is a unique positive, radial solution to this equation called the ground state, see Strauss~\cite{Strauss77}, Berestycki, Lions~\cite{BerLions}, Coffman~\cite{Coff}, Kwong~\cite{Kwong}.  
It is characterized as the solution of minimal action. 
Letting modulation and Galilean symmetries act on these special solutions $u(t,x)$  generates an eight-dimensional manifold of solitons.
In the radial case, the manifold is only two-dimensional. 

The question of orbital stability of these solitons in the energy space was settled by Weinstein~\cite{Wein1}, \cite{Wein2}, Berestycki, Cazenave~\cite{BerCaz}, and Cazenave, Lions~\cite{CazLions}.
A general theory which covers this case was developed by Grillakis, Shatah, Strauss~\cite{GSS1}, \cite{GSS2}. 
The cut-off in the power $|u|^{p-1}u$ in the $n$-dimensional case turns out to be the $L^{2}$ critical one $p_{0}=\frac{4}{n}+1$, with $p\ge p_{0}$ being unstable and $p<p_{0}$ stable. 
In particular, the cubic NLS~\eqref{eq:Schr3} is unstable. 
Recently, Holmer, Roudenko~\cite{HolR} showed that for all radial solutions $u$ with mass $\|u\|_{2}=\|Q\|_{2}$ and energy $E(u)<E(Q)$ there is the following dichotomy: 
if $\|\nabla u\|_{2}<\|\nabla Q\|_{2}$ one has global existence and scattering (as $|t|\to\infty$), whereas for $\|\nabla u\|_{2}> \|\nabla Q\|_{2}$ there is finite time blowup in both
time directions. The radial assumption was then removed in Duyckaerts, Holmer, Roudenko~\cite{DHR}.  Note that the mass condition is easily removed by scaling, with $M(u)E(u)$ being the natural
scaling-invariant version of the energy, and with $M(u)\|\nabla u\|_{2}^2$ replacing $\|\nabla u\|_{2}^2$. It follows from the variational properties of~$Q$ that these regions are invariant
under the NLS flow. 
The methods in both papers follow the ideology of Kenig-Merle~\cite{KM1}, \cite{KM2} which in turn use the concentrated compactness
decompositions of Bahouri, Gerard~\cite{BaG}, Merle, Vega~\cite{MeV},  as well as  Keraani~\cite{Keraani}. 

In a different direction, in recent years several authors have studied {\em conditional asymptotic stability} for the case of~\eqref{eq:Schr3} as well as other equations, see~\cite{S}, \cite{KrS1}, and Beceanu~\cite{Bec1}. This refers to the fact that solitons remain asymptotically stable even in the unstable case provided the perturbations are chosen to lie
on a manifold of finite codimension near the soliton manifold. The number of ``missing'' dimensions here equals the number of exponentially unstable modes of the linearized
equation. In the case of NLS this number equals~$1$.  These investigations are related to the classical notion of stable, unstable, and center-stable manifolds in dynamical
systems, see Bates, Jones~\cite{BJones} and Gesztesy, Jones, Latushkin, Stanislavova~\cite{GJLS} for a development of these ideas applicable to  NLKG and NLS. 

In this paper we find that the center-stable manifolds act as boundary between a region of finite time blow-up and one of scattering to zero. 
In what follows
\[
 \cS_{\alpha}:=\{ e^{i\theta}Q(\cdot,\alpha)\mid  \theta\in\R\}, \pq \cS:=\Cu_{\al>0} \cS_\al, 
\]
and we set $Q=Q(\cdot,1)$ for convenience. Then $Q(x,\alpha)=\alpha Q(\alpha x)$ and $M(Q(\cdot,\alpha))=\alpha^{-1}M(Q)$. 
First, we present the following result which does not rely on the notion of a center-stable manifold. Let $\HH=H^{1}_{\mathrm{rad}}(\R^{3})$ and 
\EQ{\label{eq:HHe}
 \HH^\e:=\{u\in \HH \mid M(u)E(u)<M(Q)(E(Q)+\e^2)\}
}
as well as 
\EQ{\label{eq:HHea} 
 \HH^\e_{\alpha}:= \HH^\e\cap \{u\in\HH\mid M(u)= M(Q(\cdot,\alpha))\}
}
for any $\alpha>0$. 

\begin{thm}
\label{thm:main}   
There exists $\e>0$ small such that all solutions of \eqref{eq:Schr3} with data in~$\HH_{1}^{\e}$ exhibit one of the following nine
different scenarios, with each case being attained by infinitely many data in~$\HH^{\e}_{1}$: 
\begin{enumerate}
\item Scattering to $0$ for both  $t\to\pm\I$, 
\item Finite time blowup on both sides $\pm t>0$, 
\item Scattering to $0$ as $t\to\I$ and finite time blowup in $t<0$, 
\item Finite time blowup in $t>0$ and scattering to $0$ as $t\to-\I$, 
\item Trapped by $\cS_{1}$ for $t\to\I$ and scattering to $0$ as $t\to-\I$, 
\item Scattering to $0$ as $t\to\I$ and trapped by $\cS_{1}$ as $t\to-\I$, 
\item Trapped by $\cS_{1}$ for $t\to\I$ and finite time blowup in $t<0$, 
\item Finite time blowup in $t>0$ and trapped by $\cS_{1}$ as $t\to-\I$, 
\item Trapped by $\cS_{1}$  as $t\to\pm\I$, 
\end{enumerate}
where ``trapped by $\cS_{1}$" means that the solution stays in a $O(\e)$ neighborhood of $\cS_{1}$ relative to~$H^{1}$ forever after 
some time (or before some time). The initial data sets for (1)-(4), respectively, are open in~$\HH^{\e}_{1}$. The set of data in $H^{1}$ for 
which the associated solutions of~\eqref{eq:Schr3} forward scatter, 
i.e., $(1)\cup(3)\cup(6)$, is open, pathwise connected, and unbounded; in fact, it contains curves which connect $0$ to~$\I$
in~$H^{1}$. 
\end{thm}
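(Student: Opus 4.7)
The plan is to follow the strategy of \cite{NakS} adapted to the Schr\"odinger setting, partitioning each time direction into three exhaustive behaviors—scattering to $0$, finite-time blowup, or trapping by $\cS_1$—and showing independence of the forward and backward trichotomies to produce the $3\times 3=9$ disjoint cases. The partition itself is dynamical: near $\cS_1$ one uses hyperbolic dynamics via modulation, far from $\cS_1$ one uses the variational sign of a Pohozaev-type functional $K$, and the two regimes are glued by a one-pass theorem.

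In an $O(\e)$ neighborhood of $\cS_1$ I would modulate using two parameters, a phase $\th(t)$ and a scale $\al(t)$, writing $u(t)=e^{i\th}\bigl[Q(\cdot,\al)+v\bigr]$ with $v$ orthogonal to the generalized kernel of the linearized operator $\LL_\al$. The spectrum of $\LL_\al$ contributes a pair of real eigenvalues $\pm k$ (the $1$D unstable/stable directions) together with the symmetry-induced generalized kernel. Decomposing the equation into these subspaces gives a finite ODE system for the unstable/stable amplitudes and the two modulation parameters, driven by nonlinear remainders controlled in $H^1$. Standard invariant-manifold machinery then yields a smooth codimension-one center-stable manifold $\M\supset\cS$, with the Duyckaerts--Roudenko special solutions realizing the one-dimensional stable/unstable manifolds emanating from each $e^{i\th}Q(\cdot,\al)$. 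Handling two modulation parameters rather than one is a technical complication over \cite{NakS} but does not change the structural picture.

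The main obstacle is the one-pass theorem: a solution that exits the $O(\e)$-neighborhood of $\cS_1$ after some initial sojourn cannot return. I would prove this by a localized virial identity of the form $V(t)=\int \chi(x/R)|x|^2 |u|^2\,dx$ (or a suitable gauge-covariant analogue) whose second derivative equals $-cK(u)$ plus cutoff errors. Outside a neighborhood of $\cS_1$ the variational structure pins the sign of $K(u)$, while inside the neighborhood the modulation expansion and orthogonality give quantitative control in terms of the unstable/stable amplitudes. Because there is no finite propagation speed, the cutoff error in $V''$ is the crucial new difficulty; I would dominate it using the radial Sobolev inequality $\||x|u\|_{L^\I}\lec \|u\|_{H^1}$, which yields $L^2$ decay of $|u|$ outside large balls and hence smallness of the tail contribution. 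A monotonicity argument on $V'(t)$ (together with a lower bound on the sojourn time forced by the hyperbolic ODE) then excludes an almost-homoclinic return.

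With the one-pass theorem in hand the pieces assemble as follows. For any solution in $\HH_1^\e$ the forward behavior trichotomizes: either the solution remains in the $\e$-neighborhood of $\cS_1$ forever after some time (trapped), or it exits once and for all; in the latter case the conserved sign of $K$ after exit drives either Kenig--Merle-type scattering (when $K>0$, via concentration-compactness plus rigidity, with the rigid profile excluded by $E<E(Q)+\e^2$) or finite-time blowup (when $K<0$, via a Payne--Sattinger-type variational trapping together with Glassey's convexity argument adapted to the radial setting). The same trichotomy applies backward, and the two are independent, yielding the nine cases. Non-emptiness is obtained by concatenation: the Duyckaerts--Roudenko orbits and ground states themselves populate the trapped cases, perturbations along the $1$D stable/unstable directions populate the mixed cases $(5)$--$(8)$, and transverse perturbations of $\M$ populate $(1)$--$(4)$. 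Openness of $(1)$--$(4)$ is immediate from the openness of scattering in $H^1$ (Strichartz perturbation) and of the blowup condition $K<-\de<0$ together with the one-pass theorem. Finally, for path connectedness and unboundedness of the forward-scattering set $(1)\cup(3)\cup(6)$, I would connect the origin to arbitrarily large data by a curve $\la\mapsto u_\la\in\HH$ constructed to remain on the forward-scattering side: small data scatter, and one follows a path along scaled profiles, crossing $\M$ if necessary along its smooth codimension-one locus while preserving forward scattering on one side. The unbounded curves are produced by taking, e.g., the one-parameter family $\la e^{i\vartheta}Q(\cdot,\al)+w$ with $w$ a small stable-manifold correction, which lies in $(6)$ for all large $\la$ after reparametrization, plus an $H^1$-continuous deformation back to $0$.
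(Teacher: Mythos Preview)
Your overall architecture is right---trichotomize each time direction via one-pass plus variational sign, then cross the two trichotomies---but there are two substantive divergences from the paper, one methodological and one a real technical gap.

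\textbf{Methodological.} The paper deliberately proves Theorem~\ref{thm:main} \emph{without} any center-stable manifold or invariant-manifold machinery; that apparatus (and the fine spectral input it needs, some of which is only known via numerically assisted arguments) is reserved for Theorem~\ref{thm:main*}. Instead the paper uses a crude but explicit modulation: $\alpha$ is \emph{fixed} by mass conservation ($\alpha=M(Q)/M(u)$), and $\theta$ by a sign condition on $(u\mid e^{i\theta}Q_\alpha')$. Non-emptiness of cases (1)--(9) is then obtained by elementary constructions of initial data of the form $Q+\alpha Q'+\lambda_+\GZ_++\lambda_-\GZ_-$ together with the ejection lemma, not via Duyckaerts--Roudenko orbits or transverse perturbations of~$\M$. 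Your route through $\M$ is not wrong, but it imports hypotheses (gap condition, threshold regularity) that the paper takes pains to avoid for this theorem.

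\textbf{Technical gap in the virial.} Your proposed quantity $V(t)=\int\chi(x/R)|x|^2|u|^2$ is precisely the kind of naive cutoff the paper warns against. The paper uses the Ogawa--Tsutsumi \emph{saturated} virial $\langle\phi_m u\mid iu_r\rangle$, which for large $r$ interpolates with a Morawetz estimate rather than simply truncating; the paper states explicitly that this feature is ``crucial''. The issue bites in the scattering region $\Sg=+1$: there the variational bound is only $K(u)\ge\min(\kappa_1,\kappa_0\|\nabla u\|_2^2)$, so $K(u)$ can be arbitrarily small and a fixed cutoff error $O(R^{-2})$ cannot be absorbed. The paper's remedy is to choose $\phi(r)=r/(1+r)$ (slow decay), rewrite the identity in terms of $K(\chi_m u)$ rather than $K(u)$, and then apply the variational lemma to $\chi_m u$ via the auxiliary functional $I$; the residual $|u|^4$ error is absorbed into the \emph{kinetic} part of $K(\chi_m u)$ by a weighted radial Sobolev inequality. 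Your sketch (radial Sobolev controls the tail, monotonicity of $V'$) would handle the blowup side $\Sg=-1$ but does not, as stated, close in the scattering region.

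Smaller points: openness of the blowup set is not ``immediate''---the paper gives a short argument using the virial monotonicity to show $K(v(t))\le -1$ persists under perturbation. For path-connectedness of the forward-scattering set the paper does not touch $\M$ at all: it connects any scattering solution to small data via the wave operator. And the unbounded curve is simply obtained by following a solution in $\B_-^\e\cap\cS_+^\e$ backward toward its blowup time.
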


The reason behind the number $9$ is simply that all combinations of the three possibilities at $t=+\I$ (blowup, scattering, trapping) and the corresponding ones
at $t=-\I$ are allowed. 
The theorem applies to solutions of any mass by rescaling. More precisely, if  $u\in \HH^{\e}_{\alpha}$, then the statement
remains intact with $\cS_{1}$ replaced by $\cS_{\alpha}$ and ``trapped'' by $\cS_{\alpha}$ now meaning that $\dist(u,\cS_{\alpha})\lec\e$ where the
distance is measured in the metric 
\EQ{\label{eq:alphmetric}
\| \cdot \|_{H^{1}_{\alpha}}:= \Big( \alpha^{-1} \| \cdot \|_{\dot H^{1}}^{2} + \alpha \| \cdot\|_{2}^{2}\Big)^{\frac12}.
}
As in~\cite{NakS}, the main novel ingredient is the ``one-pass theorem'', see Theorem~\ref{thm:onepass} below. It precludes almost homoclinic
orbits which start very close to~$\cS_{1}$ and eventually return very close to~$\cS_{1}$. In combination with an analysis of the hyperbolic dynamics 
near $\cS_{1}$ which results from the exponentially unstable nature of the ground state solution, this allows one to show that the fate of the solution is governed
by a virial-type functional $K$ after it exits a neighborhood of~$\cS_{1}$. 

Using some finer spectral properties
 of the  Hamiltonian obtained by linearizing the NLS equation around~$Q$, see Proposition~\ref{prop:spectral}, 
we can formulate the following stronger statement which describes in more detail what ``trapping'' means. In this case it is better not to freeze the mass. 
In other words, we work with the full set~$\HH^{\e}$. 
We require the following terminology:

\begin{defn}\label{def:scattoQ} Let $u(0)\in \HH^{\e}$ define a solution $u(t)$ of~\eqref{eq:Schr3} for all $t\ge0$. 
We say that {\em $u$ forward scatters to $\cS$} iff 
there exist continuous curves $\theta:[0,\I)\to\R$ and $\alpha:[0,\I)\to (0,\I)$, as well as $u_{\I}\in \HH$ such that
for all $t\ge0$ 
\EQ{\label{eq:scattoS}
u(t) = e^{i\theta(t)}Q(\cdot,\alpha(t)) + e^{-it\Delta} u_{\I} + \Omega(t)  
}
where $\|\Omega(t)\|_{H^{1}}\to0$ as $t\to\I$,  $\alpha(t)\to \alpha_{\infty}>0$ as $t\to\I$. 
\end{defn} 

Note that one then necessarily has 
\EQ{\label{eq:MEal}
M(u) &= M(Q(\cdot,\alpha_{\I})) + M(u_{\I}) = \alpha_{\I}^{-1} M( Q) +  M(u_{\I}) \\
E(u) &= E( Q(\cdot,\alpha_{\I})) + \frac12 \|\nabla u_{\I}\|_{2}^{2} = \alpha_{\I} E(Q) + \frac12 \| \nabla u_{\I} \|_{2}^{2}
}
whence (using that $E(Q)=M(Q)>0$), 
\EQ{
 \pt \alpha_{\I}^{-1} \| \nabla u_{\I}\|_{2}^{2} + \alpha_{\I} \| u_{\I} \|_{2}^{2} + \frac{\| u_{\I}\|_{2}^{2}  \| \nabla u_{\I} \|_{2}^{2}}{2M(Q)} \le 2\e^{2}, 
 \pq \frac{M(Q)}{M(u)} \le \al_\I \le \frac{E(u)}{E(Q)},}
and in particular, we conclude that  $\| u_{\I} \|_{H^{1}_{{{\alpha_{\I}}}}}\le\e$, that $\al_\I$ is bounded from both above and below, and that $M(u)E(u)\ge M(Q)E(Q)$. 

The heuristic meaning of~\eqref{eq:scattoS} is simply that $u$ asymptotically decomposes
into a soliton $e^{i\theta_\I(t)}Q(\cdot,\alpha_{\I})$ plus an $H^{1}$-solution to the free Schr\"odinger equation 
(however,  the phase $\theta_\I$ is not precisely the one associated with $Q(\cdot,\alpha_{\I})$ which would mean $-t\alpha_{\I}^{2}+\ga_{\I}$).
In fact, in those cases where we can
establish~\eqref{eq:scattoS} we will be able to obtain finer statements on $\theta$ and $\alpha$, cf.\ Section~\ref{sec:mf}. 

\begin{thm}\label{thm:main*} 
There exists $\e>0$ small such that all solutions of \eqref{eq:Schr3} with data in~$\HH^{\e}$ exhibit one of the  nine
different scenarios described in Theorem~\ref{thm:main}, provided we  replace ``trapped by $\cS_{1}$" with ``scattering to $\cS$". Moreover, 
each case is attained by infinitely many data in~$\HH^{\e}$. 
The sets $(5)\cup(7)\cup(9)$ and $(6)\cup(8)\cup(9)$ are smooth codimension-one manifolds in the phase space $\HH$. 
Similarly, (9) is a smooth manifold of codimension two, and it contains~$\cS$.
\end{thm}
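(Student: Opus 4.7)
The plan is to build on Theorem~\ref{thm:main} and upgrade it in two independent directions: replace the qualitative notion ``trapped by $\cS_{1}$'' by the quantitative ``scattering to $\cS$'' of Definition~\ref{def:scattoQ}, and endow the resulting boundary sets with smooth-manifold structure of the advertised codimension. The passage from $\HH_{1}^{\e}$ to $\HH^{\e}$ is harmless because the rescaling $u(t,x)\mapsto \alpha\, u(\alpha^{2}t,\alpha x)$ is a smooth diffeomorphism $\HH^{\e}_{\alpha}\to\HH^{\e}_{1}$ that carries $\cS_{\alpha}$ onto $\cS_{1}$.

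For the scattering upgrade, I would fix a forward-trapped solution $u$ and perform modulation analysis, producing $C^{1}$ curves $\theta(t)$ and $\alpha(t)$ so that $v(t):=u(t)-e^{i\theta(t)}Q(\cdot,\alpha(t))$ is symplectically orthogonal to the two neutral directions $iQ(\cdot,\alpha(t))$ and $\partial_{\alpha}Q(\cdot,\alpha(t))$. The linearized equation for $v$ has generator $\LL$ whose spectral structure --- one real unstable eigenvalue $+k$, one stable eigenvalue $-k$, and a generalized null space spanned by the symmetry generators --- is provided by Proposition~\ref{prop:spectral}. Trapping forces the unstable coordinate $\lambda_{+}(t)$ to tend to zero exponentially (otherwise it would drive the orbit out of the $O(\e)$ neighborhood), while the modulation parameters satisfy an ODE whose right-hand side is quadratic in $v$. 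For the continuous-spectrum part of $v$ I would run a standard Strichartz bootstrap, exploiting the dispersive bounds for $e^{-it\LL}$ on the complement of the discrete spectrum, to obtain $v(t)=e^{-it\Delta}u_{\I}+\Omega(t)$ with $\|\Omega(t)\|_{H^{1}}\to 0$. Integrating the modulation ODE against this decay then yields $\alpha(t)\to\alpha_{\I}>0$ and gives the decomposition~\eqref{eq:scattoS}.

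For the manifold structure, I would invoke the classical invariant-manifold machinery (Lyapunov--Perron or graph transform, in the spirit of \cite{BJones} and~\cite{GJLS}) applied to $\LL$. In a small $H^{1}$-neighborhood $\mathcal{N}$ of $\cS$, this produces smooth local center-stable and center-unstable manifolds $\M^{\mathrm{loc}}_{\mathrm{cs}}$ and $\M^{\mathrm{loc}}_{\mathrm{cu}}$, each cut out by a single scalar ``unstable-coordinate'' equation and hence of codimension one in $\HH$. Their intersection has tangent space equal to the two-dimensional modulation space along $\cS$ because $\lambda_{+}$ and $\lambda_{-}$ are independent linear functionals, so it is codimension two and contains $\cS$. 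Global manifolds are then obtained by pulling back under the NLS flow: the forward-trapped set equals $\bigcup_{s\ge 0}\Phi_{-s}(\M^{\mathrm{loc}}_{\mathrm{cs}})$, where $\Phi_{t}$ denotes the flow. Here the one-pass theorem (Theorem~\ref{thm:onepass}) is indispensable, because it guarantees that any orbit in this union visits $\mathcal{N}$ in a single time window; this precludes the accumulation of inconsistent sheets and ensures the union is a single smooth manifold rather than a countable overlay. The backward case produces $(6)\cup(8)\cup(9)$ analogously, and (9) arises as the transverse intersection.

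The main obstacle, as the authors emphasize, is proving the one-pass theorem itself: in the absence of finite propagation speed the virial identity cannot be localized cleanly, and the long-range nonlinear tail must be controlled using the radial Sobolev estimate $|x|\,|u(x)|\lesssim \|u\|_{H^{1}}$. A secondary but pervasive complication is the presence of two modulation parameters $\theta$ and $\alpha$ rather than only the phase as in~\cite{NakS}: the solvability conditions for $(\dot\theta,\dot\alpha)$ become a $2\times 2$ linear system that must be shown to remain non-degenerate along $\cS$, and the convergence $\alpha(t)\to\alpha_{\I}$ is not implied by mass and energy conservation but must be extracted from the Strichartz analysis of $v$. Finally, ``infinitely many data'' in each of the nine cases follows from openness for cases (1)--(4), from relative openness within the codimension-one trapping manifolds for (5)--(8), and from the fact that $\cS\subset(9)$ is already a two-parameter family for~(9).
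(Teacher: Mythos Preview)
Your overall plan matches the paper's approach: rescale to mass one, identify ``trapped'' with ``lies on the local center-stable manifold $\M_{\cS}$'', deduce scattering to $\cS$ from the manifold construction, and globalize by pulling back along the flow. Two points deserve sharpening.

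First, citing the abstract invariant-manifold theorems of \cite{BJones} or \cite{GJLS} for $\M^{\mathrm{loc}}_{\mathrm{cs}}$ is not enough here: those results give existence of the manifold but say nothing about the long-time dynamics \emph{on} it (the paper makes exactly this point in the introduction). Instead the paper constructs $\M$ directly (Proposition~\ref{prop:mf}) via a Lyapunov--Perron fixed-point argument in Strichartz spaces, using Beceanu's estimate (Lemma~\ref{lem:Beclem}) for the linearized evolution with a time-dependent, non-decaying lower-order perturbation $a(t)\sigma_3$. This single construction simultaneously yields the scattering decomposition~\eqref{eq:scattoS} and a uniqueness characterization: any solution that stays within $C\delta$ of $\cS_1$ for all $t\ge 0$ must lie on $\M$. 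Your two-step plan (first prove scattering for trapped orbits, then build the manifold separately) thus collapses into one step in the paper.

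Second, the logical step ``trapped $\Rightarrow$ on $\M$'' goes through the uniqueness half of Proposition~\ref{prop:mf}, not through showing $\lambda_+(t)\to 0$ exponentially by hand. The argument is: trapping means $\lambda_+$ stays bounded, and for the scalar ODE $\dot\lambda_+ - \mu\lambda_+ = f$ with $f$ bounded, boundedness of $\lambda_+$ is \emph{equivalent} to the single condition $\lambda_+(0)+\int_0^\infty e^{-\mu s}f(s)\,ds=0$, which is exactly the defining equation of $\M$. Exponential decay of $\lambda_+$ is then a consequence, not the mechanism.

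Finally, the one-pass theorem is not what makes the globalized set $\bigcup_{s\ge 0}\Phi_{-s}(\M_{\cS})$ a smooth manifold. That follows from the forward invariance of $\M_{\cS}$ (an immediate consequence of its uniqueness characterization) together with the fact that the NLS flow is a local diffeomorphism in $H^1$; no ``inconsistent sheets'' can arise because $\Phi_{-s_1}(\M_{\cS})$ and $\Phi_{-s_2}(\M_{\cS})$ automatically agree on their overlap. The one-pass theorem enters earlier, in Theorem~\ref{thm:main}, to establish the nine-scenario partition in the first place.
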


Using common terminology from dynamical systems, see for example Hirsch, Pugh, Shub~\cite{HPS}, Vanderbauwhede~\cite{V}, and Bates, Jones~\cite{BJones}, we can say that 
$(5)\cup(7)\cup(9)$ and $(6)\cup(8)\cup(9)$ are the center-stable manifold~$\M_{cs}$, resp.~the center-unstable 
manifold $\M_{cu}$, associated with $Q$ --- {\em modulo the symmetries} given by $\alpha$ and $\theta$. 
Since center manifolds are in general not unique it might be more precise to say ``a center-stable manifold'' here. 
However, our manifolds are naturally unique for the global characterization in Theorem \ref{thm:main}.  Similarly, $(9)$ is  
the center manifold of~$Q$, again modulo the symmetries given by $\alpha$ and $\theta$. 

Every point $p\in \cS$ has a neighborhood $B_{\e}(p)$ of size $\lec\e$ relative to the metric~\eqref{eq:alphmetric} 
with $\alpha=M(Q)/M(p)$, such that $B_{\e}(p)$ is divided
by~$\M_{cs}$ into two connected components; all data in one component lead to finite time blow-up for positive times, 
whereas all data in the other lead to global solutions for positive times which scatter to zero as $t\to +\I$. All solutions 
starting on $\M_{cs}$ itself scatter to~$\cS$ in the sense of~\eqref{eq:scattoS} as $t\to+\I$.

The study of stable/unstable/center-stable manifolds near equillibria of ODEs (also in infinite dimensions) has a long 
history in dynamics. In fact, their existence for the cubic NLS~\eqref{eq:Schr3} was
shown in~\cite{GJLS} and~\cite{BJones}. However, in contrast to Theorem~\ref{thm:main*} no results are obtained there concerning the long-time behavior of
the solutions on the center manifold. The unique (up to the modulation and dilation symmetries) one-dimensional stable/unstable manifolds emanating from~$Q$ are characterized
by the requirement that $u(t)\to e^{-it}Q$ in $H^{1}$ exponentially fast as $t\to\I$ or $t\to -\I$. Clearly, the corresponding solutions must
have energy equal to that of~$Q$.  The same definition applies to $\cS$ with $Q$ being replaced by~$e^{-it\al^2+i\theta_0}Q(\cdot,\al)$. 
In our work these one-dimensional manifolds (up to the symmetries) appear naturally in the 
form of those solutions found by Duyckaerts, Roudenko~\cite{DuyR}. It is important to note that we can therefore completely describe the global (i.e., both as $t\to\I$ as well
as $t\to-\I$)
behavior of the stable/unstable manifolds in this setting. 

\begin{thm} \label{thm:threshold}
Consider the limiting case $\e\to 0$ in Theorem \ref{thm:main}, i.e., all the radial solutions satisfying $E(u) \le E(Q)$ and $M(u)=M(Q)$. 
Then the sets (3) and (4) vanish, while the sets (5)-(9) are characterized, with some special solutions $W_\pm$ of~\eqref{eq:Schr3}, as follows:
\EQ{
 \pt (5)=\{e^{i\theta}W_-(t-t_0) \mid t_0,\theta \in\R\},
 \pq (6)=\{ e^{i\theta} \overline{W_-}(-t-t_0) \mid t_0,\theta \in\R\},
 \pr (7)=\{  e^{i\theta}W_+(t-t_0) \mid t_0,\theta \in\R\},
 \pq (8)=\{  e^{i\theta} \overline{W_+}(-t-t_0) \mid t_0,\theta \in\R\},
 \pr (9)=\{  e^{-i(t+\theta)} Q  \mid \theta \in\R\}.}
 The sets
 $(5)\cup(7)\cup(9)$ form the stable manifold, whereas $ (6)\cup(8)\cup(9)$ are the unstable manifold of~$Q$, up to the modulation symmetry.
In other words, solutions in $(5), (7)$ and $(6), (8)$ approach a soliton trajectory in $\cS_1$ exponentially fast as $t\to\I$ or $t\to-\I$, respectively. 
 An analogous statement holds without the mass constraint, but then these sets take the form 
 $\{ e^{i \theta}\alpha W_{\pm}(\al^{2}(t-t_{0}),\alpha x)\}$, $\{ e^{i \theta}\alpha \overline{W_{\pm}}(-\al^{2}(t+t_{0}),\alpha x)\}$, resp.\ $\{e^{-i(t\al^2+\theta)} Q(\cdot,\alpha)\}$,  
 where $\theta,\alpha$ vary. 
\end{thm}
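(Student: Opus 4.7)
The plan is to take the $\e\to 0$ limit in Theorem~\ref{thm:main} and to invoke the known rigidity results at the mass-energy threshold. The intersection $\bigcap_{\e>0}\HH^\e_1$ equals $\{u\in\HH : M(u)=M(Q),\ E(u)\le E(Q)\}$, so I split the analysis into the strictly subcritical regime $E(u)<E(Q)$ and the critical regime $E(u)=E(Q)$. Before starting, I note that in this limit the ``trapped'' case collapses to honest convergence to $\cS_1$: by Definition~\ref{def:scattoQ} and relation~\eqref{eq:MEal}, any forward scattering to $\cS$ with $M(u)=M(Q)$ and $E(u)\le E(Q)$ forces $\alpha_\I=1$ and $u_\I=0$, so the $\Omega(t)\to 0$ in $H^1$ means the solution tracks the soliton curve exactly in the limit.

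For $E(u)<E(Q)$, $M(u)=M(Q)$, I apply the Holmer--Roudenko dichotomy (with Ogawa--Tsutsumi providing radial blow-up): the sign of $\|\nabla u\|_2-\|\nabla Q\|_2$ is preserved by the flow, giving either global scattering to zero on both sides (case (1)) or finite-time blow-up on both sides (case (2)). Consequently sets (3) and (4) are empty at $\e=0$. This also shows that the trapped cases (5)--(9) can only be realized at the exact threshold $E(u)=E(Q)$, $M(u)=M(Q)$.

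For the threshold $E(u)=E(Q)$, $M(u)=M(Q)$ I invoke the Duyckaerts--Roudenko classification~\cite{DuyR} (modelled on Duyckaerts--Merle~\cite{DM1}): up to phase and time-translation, the only radial solutions of~\eqref{eq:Schr3} are $e^{-it}Q$, the two special solutions $W_\pm$, and their time-reversed complex conjugates $\overline{W_\pm}(-\cdot\,,\cdot)$; both $W_\pm$ converge exponentially in $H^1$ to $e^{-it}Q$ as $t\to+\I$, whereas $W_-$ scatters to zero as $t\to-\I$ and $W_+$ blows up in finite negative time. Matching these asymptotics to the nine cases of Theorem~\ref{thm:main} identifies $W_-$ with (5), $W_+$ with (7), and the ground-state orbit $e^{-i(t+\theta)}Q$ with (9). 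The time-reversal symmetry $u(t,x)\mapsto\overline{u(-t,x)}$ of~\eqref{eq:Schr3} interchanges the roles of $t\to\pm\I$ and thus converts (5) into (6) and (7) into (8), giving the claimed description of those sets.

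The stable/unstable manifold interpretation is then immediate: $(5)\cup(7)\cup(9)$ consists exactly of the radial threshold solutions that converge exponentially to a trajectory in $\cS_1$ as $t\to+\I$, hence it is the stable manifold of~$\cS_1$ modulo the phase symmetry, and similarly for $(6)\cup(8)\cup(9)$; the exponential rate is the one furnished by the spectral gap of the linearization at~$Q$ in Proposition~\ref{prop:spectral}. The mass-unconstrained statement follows by applying the symmetry $u(t,x)\mapsto \alpha u(\alpha^2 t,\alpha x)$, which sends $\cS_1$ to $\cS_\alpha$ and preserves~\eqref{eq:Schr3}, reducing the general case to the mass-normalized one. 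The only nontrivial input is the rigidity theorem of~\cite{DuyR}, which is precisely what rules out further threshold solutions; once this is in hand, the remaining step is the bookkeeping matching of asymptotics to the nine cases, which is routine.
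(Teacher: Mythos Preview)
Your argument is correct but takes a genuinely different route from the paper's. You treat the Duyckaerts--Roudenko classification~\cite{DuyR} as a black box at the threshold level, whereas the paper rederives that classification from its own center-stable manifold machinery. Concretely, the paper argues that any trapped solution lies on~$\M_\cS$ for large times by the uniqueness part of Proposition~\ref{prop:mf}; since $E(u)=E(Q)$ and $M(u)=M(Q)$ force $u_\I=0$ in~\eqref{eq:scattoS}, Corollary~\ref{cor:paraM} applies and shows that, at fixed $\ga_\I$, such solutions are parametrized by a single real number~$\la_0$, with $\la_0=0$ giving the soliton. The sign of $\la_0$ then fixes $\sign K$ upon ejection backward in time via Lemma~\ref{lem:eject}, producing exactly two orbits $W_\pm$ per phase. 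The paper also handles the emptiness of (3) and (4) at the threshold by a direct sign argument on~$K$ (a sign change would force $K(u(t_0))=0$, hence $u$ is the soliton by Lemma~\ref{minimization}), rather than reading it off~\cite{DuyR}. What your shortcut buys is brevity; what the paper's route buys is self-containment given Section~\ref{sec:mf}, and an independent reproof of the~\cite{DuyR} rigidity in this setting. One minor slip: your statement of the~\cite{DuyR} result is imprecise as written, since at $E(u)=E(Q)$, $M(u)=M(Q)$ there are also generic solutions in cases (1) and (2), not only $Q$, $W_\pm$ and their conjugates; the correct statement is that those special solutions exhaust the \emph{non-generic} (i.e., trapped) behavior.
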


This paper is organized as follows. In Section~\ref{sec:ground} we review some variational properties of the ground state and discuss
the linearized operators. In Section~\ref{sec:mod} we present the modulation method which we use in the proof of Theorem~\ref{thm:main}.
Since Theorem~\ref{thm:main} is closer to orbital stability than asymptotic stability, the modulation approach  of Section~\ref{sec:mod}  
is less precise but easier to work with than the one usually 
employed in asymptotic stability theory. 
Section~\ref{sec:onepass} presents the one-pass theorem,
and is of central importance to the entire paper. 
The proof of that theorem is more involved than in the Klein-Gordon case~\cite{NakS}, due to the lack of finite speed of propagation. 
We will modify Ogawa-Tsutsumi's {\it saturated virial} identity \cite{OgTs} in the radial energy space, fitting it in the variational argument away from the ground state. 
Section~\ref{sec:scat} shows by a Kenig-Merle type argument~\cite{KM1}, that those solutions which are guaranteed by the one-pass theorem to
exist for all positive times actually scatter to zero. The proof of Theorem~\ref{thm:main} is then given in Section~\ref{sec:proofmain}. 
Up until that point, our arguments do not require any fine spectral properties of the linearized NLS Hamiltonian. This changes in Section~\ref{sec:mf} where
we construct the center-stable manifold in the radial energy class near~$Q$ following the method in~\cite{S} and~\cite{Bec2} (we remark that Beceanu~\cite{Bec2} has constructed
the manifold in $\dot{H}^{\frac12}$ without any radial assumption). Some of the aforementioned spectral properties are  -- at least for the moment -- 
only known via numerically assisted arguments, see~\cite{DS} and~\cite{MS} as well as 
Proposition~\ref{prop:spectral}. More precisely,  for the structure of the real spectrum we rely on the recent work of Marzuola and Simpson~\cite{MS} which is partially numerical (in the
spirit of Fibich, Merle, Raphael~\cite{FMR}). 
The construction of the manifold relies on a novel dispersive estimate due to Beceanu~\cite{Bec1} which allows for {\em small but not
decaying} and purely time-dependent lower-order perturbations to a Schr\"odinger operator. We rederive what is needed from~\cite{Bec1}  in our setting in Section~\ref{sec:AppendixB}. Section~\ref{sec:proofmain*} presents the proofs of Theorems~\ref{thm:main*} and~\ref{thm:threshold}, and they require the center-stable 
manifold of Section~\ref{sec:mf}. Section~\ref{sec:AppendixA} recalls some basic results related to the scattering theory of~\eqref{eq:Schr3} such as 
the Bahouri-Gerard decomposition in this setting, and the perturbation lemma needed for the Kenig-Merle method, and Section~\ref{appendix radSob} gives a proof for some radial Sobolev-type inequalities. 

The research in this paper as well as that of~\cite{NakS} is part of the wider area encompassing dispersive equations and their global existence theory
on the one hand, and the theory of unstable equilibria such as the ground state soliton on the other hand. Especially for the $L^2$ critical NLS equation 
substantial progress has been made on the very delicate blowup phenomena exhibited at and near the ground state. The $L^2$ critical equation is special
due to its invariance under the {\em pseudo-conformal transformation}, see for example~\cite{Caz}. Applying this class of transformations to the ground state~$Q$ 
gives rise to a solution blowing up in finite time, and it is unique with this property at exactly the mass of~$Q$, see Merle~\cite{Merle}. 
Bourgain, Wang~\cite{BourWang} studied the {\em conditional stability} of the pseudo-conformal blowup on a submanifold of large codimension,
and Krieger and the second author~\cite{KrS NLS} established the existence of a codimension~$1$ submanifold (albeit with no regularity and in a strong topology)
for which these solutions are preserved.  The conjecture that the pseudo-conformal should be stable under a codimension~$1$ condition is due to Galina Perleman~\cite{P}. 

A sweeping analysis of the {\em stable} blowup regime near the ground state for the $L^2$-critical case was carried out by Merle, Rapha\"el~\cite{MerRaph} in a series
of works, preceded by~\cite{P} which established the existence of the so-called $\log\log$ blowup regime. Very recently Merle, Rapha\"el, and Szeftel~\cite{MeRaSz2} proved that the Bourgain-Wang solutions are on the threshold between the $\log\log$ blowup and the scattering regimes. 
In~\cite{MeRaSz1} Merle, Rapha\"el and Szeftel were able to transfer some of the techniques from the critical case to the slightly $L^2$-supercritical one and
established stable blowup dynamics near the ground state in that case.  

The $L^2$-critical instability of the ground state is {\em algebraic} in nature rather than exponential, and thus very far from the considerations in this paper.
We emphasize that the hyperbolic dynamics is exploited strongly in our arguments. In addition, we rely heavily on the {\em radial} assumption, for example in the 
virial argument.

\section{The ground state and the  linearized operator}\label{sec:ground}

In this section we recall some variational and spectral properties around the ground states. 
The scaled family of ground states $Q(\al)=Q_\al:=\al Q(\al x)$ solves 
\EQ{
 \pt-\De Q_\al + \al^2 Q_\al = Q_\al^3,
 \pr\|\na Q_\al\|_2^2=\al\|\na Q\|_2^2, \pq \|Q_\al\|_4^4=\al\|Q\|_4^4, \pq \|Q_\al\|_2^2=\al^{-1}\|Q\|_2^2.}
Differentiating in $\al$ yields 
\EQ{
 (-\De + \al^2 - 3 Q_\al^2)Q_\al'=-2\al Q_\al.}
The relevant functionals in this paper are defined as
\EQ{\label{eq:EJK}
\pt E(u) = \| \nabla u\|_{2}^2/2-\| u\|_{4}^{4}/4, \pq M(u)=\|u\|_2^2/2, 
 \pr J(u)=\|\na u\|_2^2/2+\|u\|_2^2/2-\|u\|_4^4/4, 
 \pr K(u)=\|\na u\|_2^2-\frac{3}{4}\|u\|_4^4,
}
the first three being the conserved energy, mass, and action, respectively. 
The functional $K$ results from pairing $J'(u)$ with $(x\na+\na x)u/2$, the generator of dilations. 
 By construction, $Q$ is a critical point of $J$, i.e., $J'(Q)=0$
whence also $K(Q)=0$.  Moreover, the region 
\EQ{\label{M E 2}
M(u) E(u) < M(Q) E(Q)
}
is divided into two connected components by the conditions $\{K\ge0\}$ and $\{K<0\}$. 
The quantity $ME$ in~\eqref{M E 2} is scaling invariant and was used by Holmer, Roudenko~\cite{HolR} 
in their scattering analysis. 
The aforementioned division into two connected components is intimately linked to 
 the following minimization property. Define positive functional $G$ and $I$ by 
\EQ{ \label{def G}
 \pt G(\fy) :=J(\fy)-\frac{K(\fy)}{3}=\frac 16\|\na\fy\|_{L^2}^2+\frac 12\|\fy\|_{L^2}^2, 
 \pr I(\fy) :=J(\fy)-\frac{K(\fy)}{2}=\frac 12\|\fy\|_{L^2}^2+\frac 18\|\fy\|_{L^4}^4.}
\begin{lem} \label{minimization}
We have 
\EQ{ \label{cnstr min}
J(Q) \pt= \inf\{J(\fy) \mid 0\not=\fy\in H^1,\ K(\fy)=0\}   
 \pr= \inf\{G(\fy) \mid 0\not=\fy\in H^1,\ K(\fy)\le 0\}
 \pr= \inf\{I(\fy) \mid 0\not=\fy\in H^1,\ K(\fy)\le 0\},
}
and these infima are achieved only by $e^{i\theta}Q(x-c)$, with $\theta\in\R$ and $c\in\R^3$. 
\end{lem}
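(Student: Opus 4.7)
The plan is to prove the three infima coincide and equal $J(Q)$ by first reducing the $\{K\le 0\}$ versions to the $\{K=0\}$ version via an $L^2$-preserving scaling, then attaining the infimum over $\{K=0\}$ by a concentration-compactness argument, and finally identifying the minimizer with $Q$ by a Lagrange multiplier computation together with the uniqueness of the ground state.

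For the scaling reduction, set $\varphi_\mu(x) := \mu^{3/2}\varphi(\mu x)$, which preserves $\|\varphi\|_2$. A direct computation gives
\[
K(\varphi_\mu) = \mu^2\|\nabla\varphi\|_2^2 - \tfrac{3\mu^3}{4}\|\varphi\|_4^4,\quad G(\varphi_\mu) = \tfrac{\mu^2}{6}\|\nabla\varphi\|_2^2 + \tfrac12\|\varphi\|_2^2,\quad I(\varphi_\mu) = \tfrac12\|\varphi\|_2^2 + \tfrac{\mu^3}{8}\|\varphi\|_4^4,
\]
so both $G(\varphi_\mu)$ and $I(\varphi_\mu)$ are strictly increasing in $\mu > 0$, while $K(\varphi_\mu)$ has the unique positive zero $\mu^* := \frac{4\|\nabla\varphi\|_2^2}{3\|\varphi\|_4^4}$, with $\mu^* \le 1$ iff $K(\varphi) \le 0$. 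For any $\varphi \ne 0$ with $K(\varphi) \le 0$, scaling to $\mu^*$ therefore yields $G(\varphi) \ge G(\varphi_{\mu^*}) = J(\varphi_{\mu^*})$ and $I(\varphi) \ge J(\varphi_{\mu^*})$ (using $J=G=I$ on $\{K=0\}$), strictly unless $\mu^* = 1$. The three infima thus agree, and their equality characterizations reduce to that of $J$ on $\{K=0\}$.

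To attain $J^* := \inf\{J(\varphi) : 0 \ne \varphi \in H^1,\ K(\varphi) = 0\}$, I take a minimizing sequence $\{\varphi_n\}$ and reduce to the real non-negative radial decreasing case by the phase replacement $\varphi \mapsto |\varphi|$ (using $\|\nabla|\varphi|\|_2 \le \|\nabla\varphi\|_2$) followed by Schwarz symmetrization, re-scaling back to $\{K=0\}$ via $\mu^*$ when needed (neither operation increases $G$). The constraint $\|\nabla\varphi_n\|_2^2 = \tfrac34\|\varphi_n\|_4^4$ together with Gagliardo-Nirenberg yields $\|\nabla\varphi_n\|_2\,\|\varphi_n\|_2 \gec 1$, and $G(\varphi_n) \to J^*$ bounds $\|\varphi_n\|_{H^1}$, so $\|\varphi_n\|_4$ stays bounded away from $0$. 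Strauss's radial compact embedding $H^1_{\mathrm{rad}}(\R^3) \hookrightarrow L^4(\R^3)$ then produces a nontrivial weak limit $\varphi_\infty$ with $\|\varphi_n\|_4 \to \|\varphi_\infty\|_4$. Weak lower semicontinuity gives $K(\varphi_\infty) \le 0$; strict inequality would let me scale $\varphi_\infty$ down into $\{K=0\}$ with a strictly smaller $G$, contradicting $G(\varphi_\infty) \le \liminf G(\varphi_n) = J^*$. Hence $K(\varphi_\infty) = 0$ and $\varphi_\infty$ attains $J^*$.

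As a constrained minimizer, $\varphi_\infty$ satisfies $J'(\varphi_\infty) = \nu\,K'(\varphi_\infty)$ for some $\nu\in\R$. Pairing with the generator $L\varphi := \tfrac32\varphi + x\cdot\nabla\varphi$ of the scaling above gives $\langle J'(\varphi_\infty), L\varphi_\infty\rangle = \partial_\mu J(\varphi_{\infty,\mu})|_{\mu=1} = K(\varphi_\infty) = 0$, while $\langle K'(\varphi_\infty), L\varphi_\infty\rangle = 2\|\nabla\varphi_\infty\|_2^2 - \tfrac94\|\varphi_\infty\|_4^4 = -\tfrac34\|\varphi_\infty\|_4^4 \ne 0$, forcing $\nu = 0$. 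Thus $\varphi_\infty$ solves the ground state equation $-\Delta\varphi + \varphi = |\varphi|^2\varphi$, and Kwong's uniqueness theorem~\cite{Kwong} combined with the phase reduction identifies $\varphi_\infty$ with $e^{i\theta}Q(\cdot - c)$. Together with $J(Q) \ge J^*$ this yields $J^* = J(Q)$ and the stated equality set. The main obstacle is the compactness step: securing non-triviality of the weak limit and upgrading weak convergence to $L^4$-strong convergence, which Strauss's radial embedding handles only after Schwarz symmetrization and the $\mu^*$-rescaling have removed the translation and scaling freedoms that otherwise destroy compactness.
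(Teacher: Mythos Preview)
Your proof is correct and follows the standard variational route. The paper itself does not prove this lemma: it simply refers to \cite[Lemma~2.1]{NakS} and \cite[Lemma~2.3]{IMN}, where arguments of essentially the same shape as yours appear (the $L^2$-invariant scaling reduction from $\{K\le 0\}$ to $\{K=0\}$, compactness via symmetrization, and the Lagrange multiplier computation showing $\nu=0$).

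One point worth tightening is the uniqueness statement for \emph{general} (complex, non-radial) minimizers. Your Lagrange multiplier step does not require the radial/nonnegative reduction, so any minimizer $\varphi$ of $J$ on $\{K=0\}$ solves $-\Delta\varphi+\varphi=|\varphi|^2\varphi$. To conclude $\varphi=e^{i\theta}Q(\cdot-c)$ you should then argue: the diamagnetic inequality $\|\nabla|\varphi|\|_2\le\|\nabla\varphi\|_2$ must be an equality (else $K(|\varphi|)<0$ and rescaling would beat $J^*$), forcing constant phase; then $|\varphi|$ is a positive solution, hence radial about some center by Gidas--Ni--Nirenberg, and equal to $Q$ by Kwong. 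You gesture at this but compress it into a single clause; spelling it out removes the only soft spot.
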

For the proof, see for example \cite[Lemma 2.1]{NakS} and \cite[Lemma 2.3]{IMN}. In particular, $J(\fy)<J(Q)$ implies either $\fy=0$, $K(\fy)>0$ or $K(\fy)<0$. 

Next, consider a decomposition of the solution in the form 
\EQ{
 u = e^{i\th}(Q+w).}
Inserting this into NLS yields
\EQ{ \label{eq w}
 i\dot w\pt=e^{-i\th}(\De u+|u|^2u+\dot\th u)
 \pr=(\De+\dot\th)(Q+w)+|Q|^2Q+2|Q|^2w+Q^2\bar{w}+2Q|w|^2+w^2\bar{Q}+|w|^2w
 \pr=(1+\dot\th)(Q+w)-\LL w+N(w),}
The $\R$-linear operator $\LL$ defined by\footnote{We need not extend $\LL$ as a $\C$-linear operator until Section 7, where we introduce a different notation. Hence the linear algebra for $\LL$ is always carried out in the sense of an $\R$-vector space.}
\EQ{\label{eq:RlinLdef}
 \LL w :=-\De w + w - 2 Q^2w - Q^2\bar{w},}
is self-adjoint on $L^2(\R^3;\C)$ with the inner product 
\EQ{
 \LR{f|g}:=\Re\int_{\R^3}f(x)\bar{g(x)}dx,}
and $N(w)$ is the nonlinear part defined by 
\EQ{
 N(w)=2Q|w|^2+Q w^2+|w|^2 w.}
Note that $i\LL$ is symmetric with respect to the symplectic form 
\[
 \Omega(f,g):=\Im \int_{\R^3}\bar{f(x)} g(x)dx=\LR{if|g}
\]
i.e., $\Om(i\LL f,g)=\Om(i\LL g,f)$. 
The generalized eigenfunctions of $i\LL$ are as follows: 
\EQ{\label{eq:GZpm}
 i \LL iQ=0, \pq i\LL Q'= -2iQ, \pq i\LL \GZ_\pm=\pm\mu \GZ_\pm,}
where $\mu>0$, 
\EQ{\label{eq:Q'def}
 Q'=\p_\al Q_\al|_{\al=1}=(1+r\p_r)Q, \pq \GZ_\pm=\fy\mp i\psi,} 
and with $\fy,\psi$  real-valued. In terms of the real and imaginary values, these equations are 
\EQ{
 \pt L_-Q=0, \pq L_+Q'=-2Q, 
 \pq L_-\psi = \mu\fy, \pq L_+\fy = -\mu\psi}
 with 
 \EQ{
 L_{-}=-\Delta + 1 - Q^{2},\qquad L_{+} = -\Delta + 1 - 3Q^{2}
 }
The existence of $\fy,\psi$ is standard and  follows from the minimization 
\EQ{
 \min\{\LR{\sqrt{L_-} L_+ \sqrt{L_-} f|f} \mid \|f\|_2^2\le 1\}<0}
 Recall that $L_-\ge0$ and $\ker(L_-)=\{Q\}$. In other words, $\LR{ L_- f|f}\gtrsim \|f\|_{H^1}^2$ if $f\perp Q$.  
After appropriate normalization of $(\fy,\psi)$, we have 
\EQ{
 \pt \LR{iiQ|Q'}=-\LR{Q|Q'}=M(Q),  \pq \LR{i\GZ_+|\GZ_-}=2\LR{\fy|\psi}=2\LR{L_-\psi|\psi}/\mu=1, 
 \pr 0=\LR{Q|\GZ_\pm}=\LR{iQ'|\GZ_\pm}=\LR{\fy|Q}=\LR{\psi|Q'}.}
Moreover $\LR{\psi|Q}\not=0$ and so we can choose $\LR{\psi|Q}>0$. To see this, suppose $\psi\perp Q$, then $\fy\perp L_+Q=-2Q^3$, and so 
by Lemma~2.3 of~\cite{NakS},  
$0\le\LR{L_+\fy|\fy}=-\mu\LR{\psi|\fy}<0$, which is a  contradiction. 

The symplectic decomposition of $L^2(\R^3;\C)$ corresponding to these discrete modes 
 is uniquely given by 
\EQ{
 \pt f=ai Q+bQ'+c_+\GZ_++c_-\GZ_- + \eta,
 \pr a=\LR{if|Q'}/M(Q), \pq b=-\LR{f|Q}/M(Q), \pq c_\pm=\pm\LR{if|\GZ_\mp},}
One has  $0=\LR{\eta |Q}=\LR{i\eta |Q'}=\LR{i\eta|\GZ_\pm}$ and 
the symplectic projections onto $\{iQ,Q'\}^{i\perp}$ and onto $\{\GZ_\pm\}^{i\perp}$ commute. 

We apply the symplectic decomposition to $w$. Then, writing $\gamma= ai Q+bQ'+\eta$ one has 
\EQ{ \label{u decop}
 u=e^{i\th}(Q+w)=e^{i\th}(Q+\la_+\GZ_+ + \la_-\GZ_-+\ga)} 
 The justification for including the ``root''-part (i.e., the zero modes) in $\gamma$ follows from
  a suitable choice of the symmetry  parameters $\alpha,\theta$, see Section~\ref{sec:mod}. 
 The action is expanded as 
\EQ{\label{eq:J_exp}
 J(u)-J(Q)\pt=\frac 12\LR{\LL w|w} - C(w)
 \pn=-\mu\la_+\la_-+\frac 12\LR{\LL\ga|\ga}-C(w),}
where the superquadratic part $C(w)$ is defined by 
\EQ{ \label{def C}
 C(w)=\LR{|w|^2w|Q}+\|w\|_4^4/4.}
 The following lemma will guarantee the 
 positivity of the $\ga$ component in \eqref{eq:J_exp}.
 
\begin{lem} \label{L pos}
Let $f,g\not=0$ be real-valued,  radial and  satisfy 
\EQ{
 \LR{f|\psi}=0=\LR{g|Q'}.}
Then $\LR{L_+f|f}\simeq\|f\|_{H^1}^2$ and $\LR{L_-g|g}\simeq\|g\|_{H^1}^2$.
\end{lem}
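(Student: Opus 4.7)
The proof splits into two independent coercivity estimates, which I handle separately. For $L_-$ I would exploit the standard spectral structure: $L_-\ge 0$ with $\ker L_-=\mathrm{span}\{Q\}$, and on the radial subspace there is a positive spectral gap above zero; together with $L_-=-\De+1-Q^2$ this yields $\LR{L_- h|h}\gtrsim \|h\|_{H^1}^2$ for any $h\perp Q$ in $L^2$. To trade $\LR{g|Q'}=0$ for orthogonality to $Q$, decompose $g=bQ+h$ with $h\perp Q$; since $\LR{Q|Q'}=-\tfrac12\|Q\|_2^2\ne 0$, the constraint forces $|b|\lesssim\|h\|_2$, so $\|g\|_{H^1}\simeq\|h\|_{H^1}$, and $\LR{L_- g|g}=\LR{L_- h|h}$ because $L_- Q=0$.

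For $L_+$, on the radial subspace $L_+$ has a unique simple negative eigenvalue $-\nu$ with positive ground state $\chi$, trivial kernel (the non-radial $\pd_{x_j}Q$ span the full kernel), and positive spectrum above a gap. The key preliminary step is to show $\LR{\psi|\chi}\ne 0$. Using $L_+\fy=-\mu\psi$ and self-adjointness,
\[
 -\mu\LR{\psi|\chi}=\LR{L_+\fy|\chi}=\LR{\fy|L_+\chi}=-\nu\LR{\fy|\chi},
\]
so $\LR{\psi|\chi}=(\nu/\mu)\LR{\fy|\chi}$. Nonvanishing of $\LR{\fy|\chi}$ is forced by $\LR{L_+\fy|\fy}=-\mu/2<0$: were $\fy\perp\chi$ in $L^2$, then $\fy$ would lie entirely in the positive invariant subspace $V_+:=\{\chi\}^\perp$ of $L_+$, contradicting $\LR{L_+\fy|\fy}<0$.

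With $\LR{\chi|\psi}\ne 0$ in hand, decompose $f=c\chi+h$ with $h\perp\chi$ in $L^2$. The hypothesis $\LR{f|\psi}=0$ gives $c=-\LR{h|\psi}/\LR{\chi|\psi}$, so $|c|\lesssim\|h\|_2$ and $\|f\|_{H^1}\simeq\|h\|_{H^1}$. Then $\LR{L_+ f|f}=-\nu c^2\|\chi\|_2^2+\LR{L_+ h|h}$, and the essential task is to show the positive term dominates uniformly. Since $V_+$ is $L_+$-invariant and $L_+|_{V_+}>0$, Cauchy--Schwarz in the induced inner product on $V_+$ yields
\[
 \LR{h|\psi}^2=\LR{h|P_{V_+}\psi}^2\le\LR{L_+ h|h}\cdot\LR{L_+^{-1}P_{V_+}\psi|P_{V_+}\psi}.
\]
From $\psi=-\mu^{-1}L_+\fy$ and $[L_+,P_{V_+}]=0$ one has $L_+^{-1}P_{V_+}\psi=-\mu^{-1}P_{V_+}\fy$ on $V_+$; expanding these in terms of $\fy,\psi,\chi$ and invoking the identity $\LR{\psi|\chi}=(\nu/\mu)\LR{\fy|\chi}$ to collapse the cross terms yields
\[
 \LR{L_+ f|f}\ge\LR{L_+ h|h}\cdot\frac{\nu\|\chi\|_2^2}{2\mu\LR{\psi|\chi}^2},
\]
with a strictly positive coefficient. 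Combined with $\LR{L_+ h|h}\gtrsim\|h\|_{H^1}^2$ (spectral gap of $L_+$ on $V_+$), this gives $\LR{L_+ f|f}\gtrsim\|f\|_{H^1}^2$.

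The main obstacle is the $L_+$ argument, specifically extracting the uniform lower bound for the coefficient multiplying $\LR{L_+ h|h}$. The two structural inputs that make this go through are the identity $\LR{\psi|\chi}=(\nu/\mu)\LR{\fy|\chi}$, which ties the hypothesis $f\perp\psi$ to the negative spectral direction of $L_+$, and the algebraic relation $L_+^{-1}P_{V_+}\psi=-\mu^{-1}P_{V_+}\fy$, which makes the Cauchy--Schwarz estimate explicit and its leading coefficient manifestly positive.
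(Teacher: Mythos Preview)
Your $L_-$ argument is essentially identical to the paper's. For $L_+$, however, the paper takes a much shorter route than your spectral decomposition. It argues by contradiction: if some nonzero radial $f\perp\psi$ had $\LR{L_+f|f}\le 0$, then since $\LR{L_+f|\fy}=\LR{f|-\mu\psi}=0$ and $\LR{L_+\fy|\fy}=-\mu/2<0$, the quadratic form $\LR{L_+\cdot|\cdot}$ would be nonpositive on the two-dimensional span of $f$ and $\fy$, contradicting that $L_+$ has Morse index one on the radial subspace. This yields strict positivity, and coercivity follows by a routine compactness argument (which the paper leaves implicit). Your approach, by contrast, decomposes along the negative eigenfunction $\chi$, applies Cauchy--Schwarz in the $L_+$-inner product on $\{\chi\}^\perp$, and exploits $L_+^{-1}P_{V_+}\psi=-\mu^{-1}P_{V_+}\fy$ to compute the coefficient explicitly; the algebra does collapse as you claim (using $\LR{\fy|\psi}=1/2$ and $\LR{\psi|\chi}=(\nu/\mu)\LR{\fy|\chi}$) to the positive constant $\nu\|\chi\|_2^2/(2\mu\LR{\psi|\chi}^2)$. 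The paper's Morse-index argument is cleaner and avoids introducing $\chi$ altogether, while yours has the advantage of producing an explicit coercivity constant and bypassing the compactness step.
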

\begin{proof}
Let $f\not=0$ satisfy $f\perp\psi$ and $\LR{L_+f|f}\le 0$. Then 
\EQ{
 \LR{L_+f|\fy}=\LR{f|-\mu\psi}=0, \pq \LR{L_+\fy|\fy}=-\mu\LR{\psi|\fy}=-\mu/2<0,}
so $f$ and $\fy$ are not colinear, and moreover 
\EQ{
 \LR{L_+(af+b\fy)|af+b\fy}=a^2\LR{L_+f|f}+b^2\LR{L_+\fy|\fy}+2ab\LR{L_+f|\fy}
 \le 0}
for any $a,b\in\R$, which contradicts the fact that $L_+$ has only one nonpositive eigenvalue (cf.~for example, Lemma~2.3 in~\cite{NakS}). 

Next, apply the orthogonal projection of $Q$ to $g$: 
\EQ{
 g=cQ+g', \pq g'\perp Q,}
then $\LR{g|Q'}=0$ implies that $c=-\LR{g'|Q'}/\LR{Q|Q'}$. Hence 
\EQ{
 \|g\|_{H^1}^2\simeq c^2+\LR{L_-g'|g'}\lec \|g'\|_{H^1}^2 \simeq \LR{L_-g|g}}
 as desired. 
\end{proof}

The spectrum of $L_-$ in $L^2_{\mathrm{rad}}$ consists of $0$ as a ground state (simple eigenvalue), $[1,\I)$ as essential
spectrum (which is absolutely continuous); $L_+$ (again over the radial subspace) has a ground state with eigenvalue $-k^2<0$, no other
eigenvalues in $(-k^2, \eps)$ where $\eps>0$, and the same essential spectrum as $L_-$. These properties are well-known and
easy to obtain via variational arguments, see for example~\cite[Lemma 2.1]{NakS}.  More delicate is the question of eigenvalues in the gap $(0,1]$
and what the behavior is at the threshold~$1$.  This question turns out to be irrelevant for the proof of Theorem~\ref{thm:main}, but is relevant once
the center-stable manifold comes into play, at least with the approach that is implemented here (Lyapunov-Perron method).  
For the cubic nonlinearity, as it is being considered here, \cite{DS} gives numerical evidence that $L_\pm$ have no eigenvalues in $(0,1]$
and that $1$ is a regular threshold (no resonance there).

\section{Parameter choice}\label{sec:mod}

In this section, we determine the modulation parameters of the ground state part, so that we can translate the local arguments from the Klein-Gordon case \cite{NakS} (where the ground state is fixed) to the modulation analysis for NLS. In particular, we will derive the ejection lemma and the variational lower bounds in the same spirit as in \cite[Lemmas 4.2 and 4.3]{NakS}. 

We determine $\al,\th$ explicitly by the equations 
\EQ{\label{eq:para}
 M(u)=M(Q_\al), \pq (u|e^{i\th}Q_\al' )<0,}
where $(f|g)=\int f(x)\bar{g(x)}dx$. 
Indeed, both formulae can be explicitly solved by 
\EQ{
 \al=M(Q)/M(u), \pq \th=\Im\log(u|-Q_\al').}
Since $(Q_\al|Q_\al')=-\al^{-2}M(Q)<0$, there is a unique solution $(\al,e^{i\th})\in(0,\I)\times S^1$ as long as $u$ is close to some $e^{i\th}Q_\al$. 
It is easy to see that $u=e^{i\fy}Q_\be$ gives $\fy=\th$ and $\al=\be$.  Even though this choice of parameters differs from the 
traditional one used in ``modulation theory'' (see Section~\ref{sec:mf} for the latter)  we find that~\eqref{eq:para}  
is convenient for our purposes.  Loosely speaking, up until Section~\ref{sec:mf} we will be working  more in the spirit of orbital
stability theory, whereas Section~\ref{sec:mf} requires the finer asymptotic stability property and thus a different handling of the modulation
parameters. 

The advantage of this choice of $(\al,\theta)$  is that it is explicit and moreover $M(u)$ is conserved in time, and so $\al$ is fixed. 
A disadvantage is that it is nonlinear, in the sense that 
\EQ{
 \LR{w|Q_\al}=-M(w), \pq \LR{iw|Q'_\al}=0,}
but this will be a higher order effect that can be ignored (we assume throughout this section that $w$ is small). 
Without loss of generality we now fix 
\EQ{
 \al=1, \pq M(u)=M(Q)}
and omit $\al$. We can further decompose 
\EQ{\label{eq:lam+-}
 u=e^{i\th}(Q+w), \pq w=\la_+\GZ_++\la_-\GZ_-+\ga,
 \pq \la_\pm=\pm\LR{iw|\GZ_\mp}.}
Moreover, define
\EQ{
 \la_1:=(\la_++\la_-)/2, \pq \la_2:=(\la_+-\la_-)/2, \pq \vec\la:=(\la_1,\la_2).}
so that the decomposition is written as 
\EQ{
 w=2\la_1\fy-2i\la_2\psi+\ga.}
The remainder's orthogonality is given by
\EQ{
 \LR{\ga|Q}=-\frac12\|w\|_2^2, \pq \LR{i\ga|Q'}=0, \pq \LR{i\ga|\GZ_\pm}=0.}
which, by Lemma \ref{L pos}, is sufficient for the property 
\EQ{
 \LR{\LL\ga|\ga}\simeq\|\ga\|_{H^1}^2.}
The equation of $\th$ is obtained by differentiating 
$0=\LR{iu|e^{i\th}Q'}=\LR{iw|Q'}$. Using the equation of $w$ \eqref{eq w}, as well as $\LR{w+2Q|w}=0$, one concludes that 
\EQ{
 (\dot\th+1)[M(Q)-\LR{w|Q'}]=\LR{-\LL w+N(w)|Q'}
 = -\|w\|_2^2+\LR{N(w)|Q'}.}
The equation for $\la_\pm$ is obtained by differentiating \eqref{eq:lam+-}. In fact, 
\EQ{
 \dot\la_\pm \pt= \pm\LR{i\dot w|\GZ_\mp} = \LR{(\dot\th+1)(Q+w)-\LL w+N(w)|\pm\GZ_\mp}
 \pr= \pm\mu\la_\pm + N_\pm(w), \\
  N_\pm(w) &:=\LR{N(w)+(\dot\th+1)w|\pm\GZ_\mp},}
and so $\vec\la$ solves 
\EQ{
 \pt \dot\la_1 = \mu\la_2 + N_1(w), \pq N_1(w)=\LR{N(w)+(\dot\th+1)w|i\psi},
 \pr \dot\la_2 = \mu\la_1 + N_2(w), \pq N_2(w)=\LR{N(w)+(\dot\th+1)w|\fy}.}
Recall the energy expansion
\EQ{ \label{ene exp}
 J(u)-J(Q)\pt=-\mu\la_+\la_-+\frac12\LR{\LL\ga|\ga}-C(w)
 \pr=\mu[\la_2^2-\la_1^2]+\frac12\LR{\LL\ga|\ga}-C(w).}
We therefore define the linearized energy norm to be
\EQ{\label{eq:linEnorm}
 \|v\|_{E}^2\pt:=\mu|\vec\la|^2+\frac12\LR{\LL\ga|\ga}
 \pn=\frac{\mu}{2}(\la_+^2+\la_-^2)+\frac12\LR{\LL\ga|\ga} \simeq \|v\|_{H^1}^2,}
 where we used Lemma~\ref{L pos} for the final step. 
Furthermore, we define the smooth nonlinear distance function in such a way that, 
still under the mass constraint $M(u)=M(Q)$, 
\EQ{\label{eq:dQdef}
\pt d_{Q} ^{2}  (u)\simeq  \inf_{\be\in\R} \| u-e^{i\be} Q\|_{H^{1}}^{2} 
\pr d_Q^2(u) = \|w\|_{E}^2-\chi(\|w\|_{E}/(2\de_E))C(w)\text{\ if\ } d_{Q}(u)\ll 1,}
where $\de_E\ll 1$ is chosen such that 
\EQ{
 \|v\|_{E} \le 4\de_E \implies |C(v)|\le \|v\|_{E}^2/2.} 
 The smooth cut-off $\chi(r)$ is equal to one on $|r|\le 1$ and vanishes for~$|r|\ge2$. 
To see the consistency of the above two properties, let $u=e^{i\be}Q+v$ be a minimizer for $$\dist_{H^1}(u,\cS_1)=\inf_\be\|u-e^{i\be}Q\|_{H^1}.$$ 
Then $\LR{w|iQ'}=0$ implies that $\LR{e^{-i\th}v|iQ'}=\sin(\be-\th)M(Q)$, and so $$\|v\|_{H^{-1}}\gec \inf_{k\in\Z}|\be-\th+k\pi|$$ as long as $v$ is small.
The case of $k$ odd can be eliminated here via the sign in~\eqref{eq:para}. Indeed,  by the second condition in~\eqref{eq:para}, 
\EQ{
-\cos(\beta-\theta) (Q|Q') > \Re(v|e^{i\theta} Q')
}
which 
excludes that $\beta-\theta$ lies near an odd multiple of~$\pi$. 
Therefore, 
\EQ{
 \|w\|_E \simeq \|w\|_{H^1} \lec \|v\|_{H^1} \le \|w\|_{H^1}.} 
By the same argument, if $u=e^{i\be}Q+v$ is an $L^2$ distance minimizer, then 
\EQ{ \label{param low ord}
 |\be-\th|\lec \dist_{L^2}(u,\cS_1),}
provided that the right-hand side is small enough. 

In the region $d_Q(u)\ll 1$, the distance function $d_Q$ enjoys the following properties: 
\EQ{ \label{energy dist}
 \pt\|w\|_{E}^{2}/2\le d^{2}_Q(u) \le 2\|w\|^{2}_{E}, \pq d_Q^2(u)=\|w\|_{E}^2+O(\|w\|_{E}^3), 
 \pr d_Q(u)\le\de_E \implies d^{2}_Q(u)=J(u)-J(Q)+2\mu\la_1^2.}
Hence as long as $d_Q(u)<\de_E$ we have 
\EQ{\label{eq:dQu_diff}
 \p_td_Q^2(u)\pt=4\mu\la_1\dot\la_1
 \pn=4\mu^2\la_1\la_2+4\mu\la_1N_1(w).}

\begin{lem} \label{lem:eigen}
For any $u\in\HH_1$ satisfying  
\EQ{
 J(u)<J(Q)+d_Q(u)^2/2, \pq d_Q( u)\le \de_E,}
one has $d_Q(u)\simeq|\la_1|=-\sg\la_{1}$ for $\sg=\pm 1$. 
\end{lem}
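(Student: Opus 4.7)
The plan is to combine the hypothesis $J(u)<J(Q)+d_Q^2(u)/2$ with the identity in \eqref{energy dist}, namely $d_Q^2(u)=J(u)-J(Q)+2\mu\la_1^2$ (valid as soon as $d_Q(u)\le\de_E$), to extract a lower bound on $|\la_1|$ in terms of $d_Q(u)$. The matching upper bound will come from the elementary observation that $\la_1^2$ is one of the positive summands defining the linearized energy norm $\|w\|_E^2$, which is comparable to $d_Q^2(u)$.

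For the lower bound, I would substitute the identity into the hypothesis to obtain
\EQ{
d_Q^2(u)-2\mu\la_1^2 \;=\; J(u)-J(Q) \;<\; \tfrac12 d_Q^2(u),
}
so that $\tfrac12 d_Q^2(u)<2\mu\la_1^2$, hence $|\la_1|>d_Q(u)/(2\sqrt{\mu})$. For the upper bound, recalling from \eqref{eq:linEnorm} the decomposition
\EQ{
\|w\|_E^2 \;=\; \mu(\la_1^2+\la_2^2) + \tfrac12\LR{\LL\ga|\ga} \;\ge\; \mu\la_1^2,
}
together with $\|w\|_E^2\le 2 d_Q^2(u)$ from \eqref{energy dist}, gives $|\la_1|\le\sqrt{2/\mu}\,d_Q(u)$. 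These two inequalities together yield $d_Q(u)\simeq|\la_1|$.

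Finally, the sign statement $|\la_1|=-\sg\la_1$ is just a definition: the lower bound forces $\la_1\neq 0$ whenever $d_Q(u)>0$, so one simply sets $\sg:=-\sign(\la_1)\in\{\pm 1\}$, and the trivial case $d_Q(u)=0$ is not excluded by any choice of $\sg$. I do not anticipate any real obstacle here; the main content is purely algebraic, already encoded in the formulas \eqref{ene exp}, \eqref{eq:linEnorm}, and \eqref{energy dist}, and the role of the hypothesis is precisely to force the ``unstable'' coordinate $\la_1$ (with its negative contribution $-\mu\la_1^2$ to the energy expansion) to dominate the distance.
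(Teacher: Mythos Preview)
Your proof is correct and follows essentially the same route as the paper's: both use the identity $d_Q^2(u)=J(u)-J(Q)+2\mu\la_1^2$ from \eqref{energy dist} together with the hypothesis to get $d_Q^2(u)<4\mu\la_1^2$, and then bound $\mu\la_1^2\le\|w\|_E^2\le 2d_Q^2(u)$ for the reverse inequality. The sign assignment is indeed just a definition, exactly as you describe.
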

\begin{proof}
\eqref{energy dist} yields
\EQ{ \label{eq:dQJJ} 
 d_Q^2(u)=J(u)-J(Q)+2\mu\la_1^2<d_Q^2(u)/2+2\mu\la_1^2.}
and so, $\mu\la_1^2/2\le\|w\|_E^2/2 \le d_Q^2(u)<4\mu\la_1^2$. The second inequality sign
uses~\eqref{energy dist}, whereas the final one uses~\eqref{eq:dQJJ}. 
\end{proof}

It will be convenient to relate $d_Q(u)$ to the $L^2$ distance, taking advantage of the $H^1$ subcriticality of our nonlinearity. 

\begin{lem} \label{lem:dist L2}
Let $u\in\HH_1$ satisfy 
\EQ{
 \|u\|_{H^1}\lec 1, \pq J(u)-J(Q)\ll\de_E^2, \pq J(u)-J(Q)<d_Q(u)^2/2.}
Then we have 
\EQ{
 \dist_{L^2}(u,\cS_1)=\inf_{\be\in\R}\|u-e^{i\be}Q\|_2 \gec\min(d_Q(u),\de_E^2).}
\end{lem}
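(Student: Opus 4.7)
\medskip
\textbf{Proof Plan.} The plan is to split the argument into two regimes according to the size of $d_Q(u)$. In the \emph{small regime} $d_Q(u)\le\de_E$, Lemma~\ref{lem:eigen} yields $d_Q(u)\simeq|\la_1|$. Unfolding the definitions of $\la_\pm$ from Section~\ref{sec:ground}, one identifies $\la_1=\LR{\Re w|\psi}$, so $|\la_1|\le\|\psi\|_2\,\|w\|_2$. Let $\be^*$ be a minimizer of $\be\mapsto\|u-e^{i\be}Q\|_2$ on $\cS_1$; since $d_Q(u)\ll 1$ the distance $\dist_{L^2}(u,\cS_1)$ is small, so \eqref{param low ord} gives $|\be^*-\th|\lec\dist_{L^2}(u,\cS_1)$ and the triangle inequality furnishes $\|w\|_2\le\|u-e^{i\be^*}Q\|_2+|\be^*-\th|\,\|Q\|_2\lec\dist_{L^2}(u,\cS_1)$. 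Combining these bounds, $d_Q(u)\simeq|\la_1|\lec\dist_{L^2}(u,\cS_1)$, which covers both sub-cases $d_Q(u)\le\de_E^2$ and $\de_E^2<d_Q(u)\le\de_E$.

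In the \emph{large regime} $d_Q(u)>\de_E$ the argument proceeds by contradiction: suppose $\e:=\dist_{L^2}(u,\cS_1)\ll\de_E^2$. With $\be^*$ the $L^2$-minimizer and $v:=u-e^{i\be^*}Q$, so $\|v\|_2=\e$, the 3D Gagliardo--Nirenberg inequality combined with $\|u\|_{H^1}\lec 1$ gives $\|v\|_4\lec\e^{1/4}$. Expanding $|u|^4-|Q|^4$ about $e^{i\be^*}Q$, the leading linear term is controlled by the $L^2$-Cauchy--Schwarz bound $|(v|Q^3)|\le\|v\|_2\,\|Q\|_6^3\lec\e$, while the higher-order pieces are estimated via $\|v\|_4\lec\e^{1/4}$; together these give $|\|u\|_4^4-\|Q\|_4^4|\lec\e$.

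The identity $\|\na u\|_2^2-\|\na Q\|_2^2=2(J(u)-J(Q))+\tfrac12(\|u\|_4^4-\|Q\|_4^4)$ and the hypothesis $J(u)-J(Q)\ll\de_E^2$, together with the a-posteriori lower bound $J(u)-J(Q)\gec-O(\e)$ that follows from the non-negativity of $\|\na u-e^{i\be^*}\na Q\|_2^2$, then yield $|\|\na u\|_2^2-\|\na Q\|_2^2|\ll\de_E^2+O(\e)$. Integrating by parts and using $-\De Q+Q=Q^3$ together with the $L^2$-minimization identity $e^{-i\be^*}(u|Q)=\|Q\|_2^2-\e^2/2$, one finds $\Re\,e^{-i\be^*}(\na u|\na Q)=\|\na Q\|_2^2+O(\e)$. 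Assembling,
\EQ{\nn
 \|u-e^{i\be^*}Q\|_{H^1}^2=\e^2+(\|\na u\|_2^2-\|\na Q\|_2^2)+O(\e)\ll\de_E^2+O(\e),
}
whence $d_Q(u)^2\lec\|u-e^{i\be^*}Q\|_{H^1}^2\ll\de_E^2$, contradicting $d_Q(u)>\de_E$. Consequently $\dist_{L^2}(u,\cS_1)\gec\de_E^2$ in this regime.

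The main obstacle is keeping the implicit constants aligned in the large-regime chain of estimates: the final bound must take the schematic form $d_Q(u)^2\lec\kappa\de_E^2+C\e$, with the constant $\kappa$ (inherited from the ``$\ll$'' in $J(u)-J(Q)\ll\de_E^2$) small enough that $d_Q(u)>\de_E$ genuinely forces $\e\gec\de_E^2$. What makes this work cleanly is that the leading linear term in $\|u\|_4^4-\|Q\|_4^4$ is bounded by the sharper $L^2$-Cauchy--Schwarz $|(v|Q^3)|\lec\|v\|_2$, giving $O(\e)$ rather than the weaker $O(\e^{1/4})$ that the Gagliardo--Nirenberg control on $\|v\|_4$ alone would produce.
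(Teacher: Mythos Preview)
Your argument is correct in outline but there is a bookkeeping slip in the large regime. You claim $|\|u\|_4^4-\|Q\|_4^4|\lec\e$ with the higher-order pieces ``estimated via $\|v\|_4\lec\e^{1/4}$'', yet the quadratic piece $\int Q^2|v|^2$ treated by H\"older in $L^4$ only gives $\|Q\|_4^2\|v\|_4^2\lec\e^{1/2}$. If you carry $O(\e^{1/2})$ through your chain, the contradiction in the large regime closes only under $\e^{1/2}\ll\de_E^2$, i.e., you would prove $\dist_{L^2}(u,\cS_1)\gec\min(d_Q(u),\de_E^4)$ rather than the stated $\de_E^2$. The fix is immediate: use $\|Q\|_\infty$ to write $\int Q^2|v|^2\le\|Q\|_\infty^2\|v\|_2^2=O(\e^2)$ and $\int Q|v|^3\lec\|Q\|_\infty\|v\|_2\|v\|_4^2=O(\e^{3/2})$, restoring the claimed $O(\e)$.

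The paper's proof is shorter and avoids the regime split. It assumes directly $\dist_{L^2}(u,\cS_1)\ll\de_E^2$, passes to the canonical decomposition $u=e^{i\th}(Q+w)$ with $\|w\|_2\simeq\dist_{L^2}$ (your use of \eqref{param low ord} is the same here), and then invokes the \emph{energy expansion} \eqref{eq:J_exp}: since $|\la_\pm|\lec\|w\|_2$ and Gagliardo--Nirenberg with $\|u\|_{H^1}\lec 1$ gives $|C(w)|\lec\|w\|_2$, one reads off
\[
\|\ga\|_{H^1}^2\simeq\tfrac12\LR{\LL\ga|\ga}=J(u)-J(Q)+\mu\la_+\la_-+C(w)\ll\de_E^2,
\]
hence $d_Q(u)\lec\|w\|_{H^1}\ll\de_E$. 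At that point Lemma~\ref{lem:eigen} applies and gives $d_Q(u)\simeq|\la_1|\lec\|w\|_2$, which is exactly your small-regime step. In effect, your large-regime computation (expanding $\|u\|_4^4$ and $\|\na u\|_2^2$ around the $L^2$-minimizer $e^{i\be^*}Q$) is a hands-on re-derivation of what \eqref{eq:J_exp} and the positivity $\LR{\LL\ga|\ga}\simeq\|\ga\|_{H^1}^2$ already encode at the modulation point $e^{i\th}Q$; the paper's route is more economical precisely because it reuses that precomputed structure rather than rebuilding it.
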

\begin{proof}
Consider the decomposition \eqref{u decop} such that $\|w\|_2\simeq \dist_{L^2}(u,\cS_1)$. This is legitimate by \eqref{param low ord}.
We may assume $\dist_{L^2}(u,\cS_1)\ll\de_E^2$. 
Then using Gagliardo-Nirenberg, we obtain 
\EQ{ 
 |C(w)|\lec\|w\|_2\ll\de_E^2, \pq |\la_\pm| \lec \|w\|_2\ll\de_E^2,}
and so from \eqref{eq:J_exp}, 
\EQ{
 \|\ga\|_{H^1_x}^2 \simeq \frac12\LR{\LL\ga|\ga}=J(u)-J(Q)+\mu\la_+\la_-+C(w) \ll \de_E^2,}
hence $d_Q(u)\lec\|w\|_{H^1_x}\lec|\la_+|+|\la_-|+\|\ga\|_{H^1_x}\ll\de_E$. 
Then the previous lemma implies that $d_Q(u)\sim|\la_1|$, whence $d_Q(u)\lec \dist_{L^2}(u,\cS_1)$ as desired. 
\end{proof}

The following lemma exhibits the mechanism by which solutions are ejected along the unstable mode. 

\begin{lem} \label{lem:eject}
There exists a constant $0<\de_X\le \de_E$, as well as constants $C_*,T_*\simeq 1$ with the following properties: Let $u(t)$ be a local solution of NLS in $\HH_1$ on an interval $[0,T]$ satisfying 
\EQ{
 R:=d_Q(u(0)) \le \de_X, \pq J(u)<J(Q)+R^2/2} 
and for some $t_0\in(0,T)$, 
\EQ{ \label{exiting condition}
 d_Q(u(t)) \ge R \pq (0<\forall t<t_0).}
Then $u$ extends as long as $d_Q(u(t))\le\de_X$, and satisfies $\forall\; t\ge0$ 
\EQ{\label{eq:eject_est}
  \pt d_Q(u(t)) \simeq -\sg\la_1(t) \simeq -\sg\la_+(t) \simeq e^{\mu t}R, 
  \pr |\la_-(t)|+\|\ga(t)\|_E \lec R+e^{2\mu t}R^2, 
  \pr \sg K(u(t)) \gec (e^{\mu t} - C_*)R,}
where $\sg=+1$ or $\sg=-1$ is constant. Moreover, $d_Q(u(t))$ is increasing for $t\ge T_*R$, and $|d_Q(u(t))-R|\lec R^3$ for $0\le t\le T_*R$. 
\end{lem}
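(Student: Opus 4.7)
The natural approach is a bootstrap/continuity argument in the modulation decomposition of Section~\ref{sec:mod}. Writing $u = e^{i\theta}(Q + w)$ with $w = \lambda_{+}\mathcal{G}_{+} + \lambda_{-}\mathcal{G}_{-} + \gamma$, the hyperbolic modes obey $\dot\lambda_\pm = \pm\mu\lambda_\pm + N_\pm(w)$ while $\gamma$ sits in the $i\mathcal{L}$-invariant symplectic complement. The bootstrap hypothesis is $d_Q(u(t)) \le \delta_X$ on a maximal interval $[0, T^{*})$; in this region the $H^{1}$-subcritical structure of the nonlinearity yields $|N_\pm(w)| + |\dot\theta + 1| \lesssim \|w\|_{H^{1}}^{2} \lesssim d_Q^{2}(u)$. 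Moreover, the energy hypothesis $J(u) < J(Q) + R^{2}/2$ combined with $d_Q(u(t)) \ge R$ ensures $J(u) < J(Q) + d_Q^{2}(u(t))/2$, so Lemma~\ref{lem:eigen} applies and gives $d_Q(u(t)) \simeq |\lambda_1(t)|$ throughout, with $\sigma := -\mathrm{sign}(\lambda_1(t))$ well-defined and constant by continuity.

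From the second line of \eqref{energy dist} and energy conservation,
\[
d_Q^{2}(u(t)) = R^{2} + 2\mu\bigl(\lambda_1^{2}(t) - \lambda_1^{2}(0)\bigr),
\]
so the exit hypothesis $d_Q(u(t))\ge R$ for $0<t<t_0$ is equivalent to $|\lambda_1(t)| \ge |\lambda_1(0)|$. Monitoring $\partial_t \lambda_1^{2}$ at $t=0^{+}$ forces $\lambda_1(0)\dot\lambda_1(0) \ge 0$, i.e.\ $\lambda_1(0)\bigl(\mu\lambda_2(0) + N_1(w(0))\bigr) \ge 0$. Since $|N_1(w(0))| \lesssim R^{2}$ while $|\lambda_1(0)| \simeq R$, this pins $\mathrm{sign}(\lambda_2(0))$ to match $\mathrm{sign}(\lambda_1(0))$ up to an $O(R)$ correction, giving $-\sigma\lambda_{+}(0) = -\sigma(\lambda_1(0) + \lambda_2(0)) \simeq R$. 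This sign determination is the most delicate step of the proof, as the scalar exit condition controls only the combination $\lambda_1 = (\lambda_{+} + \lambda_{-})/2$, and one must rule out destructive cancellation between the two hyperbolic modes at the initial time.

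With the initial configuration pinned, variation of constants on the $\lambda_\pm$ ODEs gives
\[
\lambda_{+}(t) = e^{\mu t}\lambda_{+}(0) + \int_{0}^{t} e^{\mu(t-s)} N_{+}(w(s))\, ds,
\]
and analogously for $\lambda_{-}$. Substituting the bootstrap bound $d_Q^{2}(u(s)) \lesssim (e^{\mu s}R)^{2}$ into $|N_\pm| \lesssim d_Q^{2}$ yields $|\lambda_{+}(t) - e^{\mu t}\lambda_{+}(0)| \lesssim R^{2}e^{2\mu t}$ and $|\lambda_{-}(t)| \lesssim R + R^{2}e^{2\mu t}$. For $\gamma$, expanding the conserved action via \eqref{ene exp} and invoking Lemma~\ref{L pos} yields
\[
\|\gamma(t)\|_{E}^{2} \simeq \tfrac12\langle\mathcal{L}\gamma|\gamma\rangle = J(u) - J(Q) + \mu\lambda_{+}(t)\lambda_{-}(t) - C(w(t)),
\]
and inserting $|J(u)-J(Q)| < R^{2}/2$, $|\mu\lambda_{+}\lambda_{-}| \lesssim R^{2}e^{\mu t}$, and $|C(w)| \lesssim \|w\|_{H^{1}}^{3}$ gives $\|\gamma(t)\|_{E} \lesssim R + R^{2}e^{2\mu t}$. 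Combined with $d_Q(u(t)) \simeq |\lambda_1(t)| \simeq e^{\mu t}R/2$, the bootstrap closes on the full interval where $e^{\mu t}R \lesssim \delta_X$, which extends $u$ as long as $d_Q(u(t)) \le \delta_X$ and delivers the claimed exponential growth and dispersive bounds.

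Finally, the $K$-bound follows by Taylor-expanding: $K(u) = \langle K'(Q), w\rangle + O(\|w\|_{H^{1}}^{2})$ with $K'(Q) = -2Q - Q^{3}$. The orthogonality $\langle Q,\phi\rangle = 0$ from Section~\ref{sec:ground} annihilates $\langle Q, w\rangle$ at linear order, while pairing $L_{+}\phi = -\mu\psi$ against $Q$ together with $L_{+} Q = -2Q^{3}$ (from the ground-state equation) gives $\langle Q^{3},\phi\rangle = \tfrac{\mu}{2}\langle Q,\psi\rangle$. The $\lambda_{1}$-contribution to $\langle K'(Q), w\rangle$ is therefore $-\mu\langle Q,\psi\rangle\,\lambda_{1}$, and since $\langle Q,\psi\rangle > 0$ and $-\sigma\lambda_{1}(t) \simeq e^{\mu t}R$, one obtains $\sigma K(u(t)) \gtrsim \mu\langle Q,\psi\rangle e^{\mu t}R - CR \gtrsim (e^{\mu t} - C_{*})R$, with $C_{*}\simeq 1$ absorbing the quadratic remainder and the $O(\|\gamma\|)$-contribution of $\langle Q^{3},\gamma\rangle$. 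The short-time estimate on $[0, T_{*}R]$ is then immediate from Taylor-expanding $d_Q^{2}-R^{2} = 2\mu(\lambda_{1}^{2}(t) - \lambda_{1}^{2}(0))$ with $|\dot\lambda_{1}| \lesssim R$ and $|\lambda_{1}(0)| \simeq R$, and the monotonicity of $d_Q$ for $t\ge T_{*}R$ follows from the dominance of the exponentially growing $\lambda_{+}$-contribution over the $O(R)$ error terms.
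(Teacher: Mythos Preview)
Your overall strategy matches the paper's, and the steps on $\lambda_\pm$, on the sign determination at $t=0$, and on the $K$-expansion are all correct and essentially identical to what the paper does. There is, however, a genuine gap in your $\gamma$-estimate.

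You argue that the static energy expansion
\[
\tfrac12\langle\LL\ga|\ga\rangle = J(u)-J(Q)+\mu\la_+\la_-+C(w)
\]
directly yields $\|\ga(t)\|_E\lec R+R^2e^{2\mu t}$. But the cross term satisfies $|\mu\la_+\la_-|\lec (e^{\mu t}R)\cdot R = R^2e^{\mu t}$, so the expansion only gives $\|\ga\|_E^2\lec R^2+R^2e^{\mu t}$, hence $\|\ga\|_E\lec Re^{\mu t/2}$. This is \emph{not} bounded by $R+R^2e^{2\mu t}$ in the intermediate range $1\ll e^{\mu t}\ll R^{-1}$: for instance at $e^{\mu t}=R^{-1/2}$ one has $Re^{\mu t/2}=R^{3/4}$ while $R+R^2e^{2\mu t}=2R$. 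So the bound claimed in \eqref{eq:eject_est} does not follow from the static expansion alone.

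The paper closes this gap by a differential energy argument. One introduces the projected energy
\[
E_\GZ(\la):=-\mu\la_+\la_--C(\la_+\GZ_++\la_-\GZ_-),
\]
so that $J(u)-J(Q)-E_\GZ = \tfrac12\LR{\LL\ga|\ga}-[C(w)-C(\la_+\GZ_++\la_-\GZ_-)]\simeq\|\ga\|_{H^1}^2+O(\|\ga\|_{H^1}|\la|^2)$, the point being that the difference of the two $C$-terms carries a factor of $\ga$. A direct computation using the $\la_\pm$-ODEs gives $|\p_t E_\GZ|\lec\|\ga\|_{H^1}^2|\la|+|\la|^4$. Since $J(u)$ is conserved, integrating yields
\[
\|\ga\|_{L^\I_tH^1(0,T)}^2\lec\|\ga(0)\|_{H^1}^2+\|\ga\|_{L^\I_tH^1}\|\la\|_{L^\I}^2+\|\la\|_{L^4}^4,
\]
and inserting $\|\ga(0)\|_{H^1}\lec R$ and $|\la(t)|\lec e^{\mu t}R$ gives $\|\ga\|_{L^\I_tH^1}\lec R+R^2e^{2\mu T}$ after solving the resulting quadratic. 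The key gain over your argument is that the dangerous $\la_+\la_-$ term has been absorbed into $E_\GZ$ and replaced by its time derivative, which is quartic in $\la$ rather than quadratic.
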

\begin{proof}
Lemma \ref{lem:eigen} yields $d_Q(u)\simeq-\sg\la_1$ with $\sg=\pm 1$ fixed, as long as $R\le d_Q(u)\le \de_E$.
The exiting condition \eqref{exiting condition} implies $\p_td_Q(u)^2|_{t=0}\ge 0$.
Since $|N_1(w)|\lec\|w\|_{H^1}^2\lec \la_1^2$, we deduce from~\eqref{eq:dQu_diff} that $-\sg\la_2(0)\gec-|\la_1(0)|^2$ and so $\la_+(0)\simeq\la_1(0)$. 
 
  Integrating the equation for $\la_\pm$ yields
\EQ{
 |\la_\pm(t)-e^{\pm\mu t}\la_\pm(0)| \lec \int_0^t e^{\mu(t-s)}|N_\pm(w(s))|ds \lec \int_0^t e^{\mu(t-s)}|\la_1(s)|^2ds,}
from which by continuity in time we deduce that as long as $Re^{\mu t}\ll 1$, 
\EQ{\label{eq:la_1}
 \la_1(t) \simeq \la_+(t) \simeq -\sg Re^{\mu t}, \pq |\la_\pm(t)-e^{\pm\mu t}\la_\pm(0)| \lec R^2e^{2\mu t}.}
 
   Now consider the nonlinear energy projected onto the $\GZ_\pm$ plane: 
\EQ{
 E_\GZ(\la):=-\mu\la_+\la_--C(\la_+\GZ_++\la_-\GZ_-),}
where $C(\cdot)$ is defined in \eqref{def C}. 
Using the equation of $\la_\pm$, we obtain
\EQ{
 \p_tE_\GZ\pt=-\mu\la_+\dot\la_--\mu\la_-\dot\la_+-\LR{N(\la_+\GZ_++\la_-\GZ_-)|\dot\la_+\GZ_++\dot\la_-\GZ_-}
 \pr =  \LR{ N(w) - N(\la_+\GZ_++\la_-\GZ_-)|\dot\la_+\GZ_++\dot\la_-\GZ_-}   
 \pr \qquad +   (\dot\theta+1) \LR{ w  | \dot\la_+\GZ_++\dot\la_-\GZ_- } 
 \pr\lec \|\ga\|_{H^1}^2|\la|+|\la|^4.}
Hence $|\p_t(J(u)-E_\GZ)|\lec \|\ga\|_{H^1}^2|\la|+|\la|^4$, while 
\EQ{
 J(u)-J(Q)-E_\GZ \pt=\LR{\LL\ga|\ga}/2-C(w)+C(\la_+\GZ_++\la_-\GZ_-)
 \pr\simeq\|\ga\|_{H^1}^2+O(\|\ga\|_{H^1}|\la|^2),}
and so
\EQ{ \label{eng-bd ga}
 \|\ga\|_{L^\I_tH^1(0,T)}^2 \lec \|\ga(0)\|_{H^1}^2+\|\ga\|_{L^\I_tH^1(0,T)}\|\la\|_{L^\infty(0,T)}^2+\|\la\|_{L^4(0,T)}^4,}
which is sufficient. 
Indeed, the desired estimate on $\ga$ follows from~\eqref{eq:la_1} inserted into~\eqref{eng-bd ga}. 
The equation of $\la_2$ implies $-\sg\la_2(t)\gec R(e^{\mu t}-1)-O(R^2)$, hence there is $T_*\simeq 1$ 
such that $-\sg\la_2\gec R$ and $\p_td_Q(u)>0$ for $t\ge T_*R$. 
For $0\le t\le T_*R$, we have $|\p_td_Q(u)|\lec R^2$ and so $|d_Q(u)-R|\lec R^3$. 

 Finally, we expand $K$ around $Q$:
\EQ{ \label{exp K2}
 K(u)\pt=K(Q+w)=\LR{-2Q+L_+Q/2|w_1}+O(\|w\|_{H^1}^2)
 \pr=-\la_1\mu\LR{Q|\psi}-\LR{2Q+Q^3|\ga}+O(\|w\|_{H^1}^2).}
Since $\LR{Q|\psi}>0$, we obtain the desired bound on $K$ from the behavior of $\la_1$. 
\end{proof}

The following lemma gives lower bounds on $|K|$, which should be used once the solution is away from $\cS$, or after 
being ejected from a neighborhood thereof, as described by Lemma~\ref{lem:eject}. For the definition of the functional~$I$ see~\eqref{def G}. 

\begin{lem} \label{lem:vari}
For any $\de>0$, there exist $\e_0(\de), \ka_0, \ka_1(\de),\ka_2(\de)>0$ such that the following hold: (I) For any $u\in\HH_1$ satisfying $J(u)< J(Q)+\e_0(\de)^2$, and $d_Q(u) \ge \de$, we have 
\EQ{ \label{-K bd}
 K(u) \le -\ka_1(\de),\pq \text{or} \pq  
 K(u) \ge \min(\ka_1(\de),\ka_0\|\na u\|_2^2).}
(II) For any $u\in\HH$ satisfying $I(u)<J(Q)-\de$, we have 
\EQ{
 K(u) \ge \min(\ka_2(\de),\ka_0\|\na u\|_2^2).}
\end{lem}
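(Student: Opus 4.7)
The plan is to attack both parts by compactness arguments combined with the mass-preserving rescaling $v_\la(x):=\la^{3/2}v(\la x)$ and the variational characterization of $Q$ in Lemma~\ref{minimization}.

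For part (I) I would argue by contradiction. If for some fixed universal $\ka_0\in(0,1/2]$ no such $\ka_1(\de),\e_0(\de)$ work, then one produces a sequence $u_n\in\HH_1$ satisfying $J(u_n)\to J(Q)$, $d_Q(u_n)\ge\de$, $|K(u_n)|\to 0$, and $K(u_n)<\ka_0\|\na u_n\|_2^2$. The mass constraint $M(u_n)=M(Q)$ together with $K(u_n)<\ka_0\|\na u_n\|_2^2$ and Gagliardo--Nirenberg gives $\|\na u_n\|_2\simeq 1$. Choosing $\la_n^\ast=1/(1-K(u_n)/\|\na u_n\|_2^2)$ yields $K((u_n)_{\la_n^\ast})=0$ and $\la_n^\ast\to 1$. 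Since $\tfrac{d}{d\la}J(v_\la)=K(v_\la)/\la$ and $u_n$ is $H^1$-bounded, $|J((u_n)_{\la_n^\ast})-J(u_n)|\lec|\la_n^\ast-1|\to 0$, and combined with $J((u_n)_{\la_n^\ast})\ge J(Q)$ from Lemma~\ref{minimization} this forces $J((u_n)_{\la_n^\ast})\to J(Q)$; that is, $(u_n)_{\la_n^\ast}$ is a minimizing sequence for the $K=0$ problem in~\eqref{cnstr min}. Strauss' compact embedding $H^1_{\mathrm{rad}}\hookrightarrow L^4$ together with the equality case in weak lower semicontinuity of the $H^1$-norm promote this to strong $H^1$-convergence (after extracting a subsequence and a phase) to $e^{i\th}Q$, by the uniqueness of the radial ground state established in Lemma~\ref{minimization}. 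Undoing the scaling yields $u_n\to e^{i\th}Q$ in $H^1$, contradicting $d_Q(u_n)\ge\de$.

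For part (II), Lemma~\ref{minimization} rules out $K(u)\le 0$ outright, since that would give $I(u)\ge J(Q)$, contradicting $I(u)<J(Q)-\de$; so $K(u)>0$, and I split by the size of $\|\na u\|_2$. For $\|\na u\|_2\le c_1$ with $c_1$ small, Gagliardo--Nirenberg together with $\|u\|_2^2\le 2I(u)<2J(Q)$ yields $\|u\|_4^4\lec\|\na u\|_2^3$, whence $K(u)\ge\|\na u\|_2^2/2\ge\ka_0\|\na u\|_2^2$. For $\|\na u\|_2\ge c_1$, set $\la^\ast:=(4/3)\|\na u\|_2^2/\|u\|_4^4\ge 1$ so that $K(u_{\la^\ast})=0$. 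If $\la^\ast\ge 2$, then $K(u)\ge\|\na u\|_2^2/2\ge c_1^2/2$ already. If $\la^\ast\in[1,2]$, the identity $I(u_{\la^\ast})-I(u)=\tfrac{\la^{\ast 3}-1}{8}\|u\|_4^4$, combined with $I(u_{\la^\ast})=J(u_{\la^\ast})\ge J(Q)$ (from Lemma~\ref{minimization}) and $I(u)<J(Q)-\de$, gives $(\la^{\ast 3}-1)\|u\|_4^4\ge 8\de$. Since $\|u\|_4^4\le 8I(u)\lec 1$, this forces $\la^{\ast 3}-1\gec\de$, hence $K(u)/\|\na u\|_2^2=1-1/\la^\ast\gec\de$, and finally $K(u)\gec\de\,c_1^2=:\ka_2(\de)$.

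The main delicate step is the strong-convergence argument in part (I), namely identifying a minimizing sequence for~\eqref{cnstr min} in $H^1_{\mathrm{rad}}$ with a phase translate of $Q$. I would execute this via Strauss' compact embedding to pass to a strong $L^4$-limit, weak $H^1$-lower semicontinuity to see that the limit is itself a minimizer, the characterization in Lemma~\ref{minimization} identifying the only nontrivial radial minimizer as $e^{i\th}Q$, and the equality case in the lower semicontinuity to upgrade to strong $H^1$-convergence. Everything else reduces to tracking the rescaling $v_\la$ through the functionals $J$, $K$, $I$.
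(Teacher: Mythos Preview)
Your proof is correct. For Part~(I) your contradiction argument via the mass-preserving rescaling $v_\la=\la^{3/2}v(\la\cdot)$ to hit $K=0$, followed by radial compactness and the variational characterization in Lemma~\ref{minimization}, is essentially the approach the paper takes (by reference to \cite[Lemma~4.3]{NakS}); the details you spell out are exactly what is needed.

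For Part~(II) your argument differs genuinely from the paper's. The paper runs a soft compactness contradiction: assuming $I(u_n)<J(Q)-\de$, $\|\na u_n\|_2\gec 1$, and $K(u_n)\to 0$, it passes to a weak $H^1$/strong $L^4$ limit $u_\I$, observes $I(u_\I)\le J(Q)-\de$ and $K(u_\I)\le 0$, and invokes Lemma~\ref{minimization} to force $u_\I=0$, contradicting $\|u_n\|_4^4\simeq\|\na u_n\|_2^2\gec 1$. Your argument is instead direct and quantitative: you rescale to $K(u_{\la^\ast})=0$, use $I(u_{\la^\ast})\ge J(Q)$ from Lemma~\ref{minimization}, and read off $\la^{\ast 3}-1\gec\de$ from the identity $I(u_{\la^\ast})-I(u)=\tfrac{\la^{\ast 3}-1}{8}\|u\|_4^4$, which via $K(u)=\|\na u\|_2^2(1-1/\la^\ast)$ yields an explicit lower bound $\ka_2(\de)\simeq\de\,c_1^2$. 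Your route avoids compactness entirely and gives an explicit (linear) dependence of $\ka_2$ on~$\de$; the paper's route is shorter to write down but non-constructive.
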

\begin{proof}
Part (I) is proved in the same way as  \cite[Lemma 4.3]{NakS}. In fact, the situation here is simpler because in contrast to~\cite{NakS} $d_Q$ does not contain the time derivative 
of the solution and we are dealing with only one $K$ functional. Part~(II) is proved via an analogous argument. First, since $M(u)\le I(u)$ is bounded, the Gagliardo-Nirenberg inequality 
\EQ{ \label{GN}
 \|u\|_4^4\lec\|\na u\|_2^3\|u\|_2}
implies that if $\|\na u\|_2\ll 1$ then $K(u)\simeq \|\na u\|_2^2$. Hence we may assume that $\|\na u\|_2\gec 1$. 
Suppose towards a contradiction that $u_n\in\HH$ satisfy $I(u_n)<J(Q)-\de$, $\|\na u_n\|_2\gec 1$ and $K(u_n)\to 0$. Since both $I(u_n)$ and $K(u_n)$ are bounded, the sequence $\{u_n\}$ is bounded in $H^1$. Hence by extraction of a subsequence, we may assume that $u_n\to  u_\I$ weakly in $H^1$ and strongly in $L^4$. Then $I(u_\I)\le J(Q)-\de$ and $K(u_\I)\le 0$, so Lemma~\ref{minimization} implies that $u_\I=0$, hence $\|u_n\|_4\to 0$, which contradicts that $\|\na u_n\|_2\gec 1$ and $K(u_n)\to 0$. 
\end{proof}

Combining the above lemmas, we can now define the sign function $\Sg$ which determines the fate of solutions passing by $\cS$. The proof is the same as for 
the analogous statements~\cite[Lemmas 4.4 and 4.5]{NakS}. 

\begin{lem} \label{lem:sign}
Let $\de_S:=\de_X/(2C_*)>0$ where $\de_X$ and $C_*\ge 1$ are the constants from Lemma \ref{lem:eject}. Let $0<\de\le\de_S$ and 
\EQ{ \label{def HS}
 \HH_{(\de)}:=\{u\in\HH_1 \mid J(u)<J(Q)+\min(d_Q(u)^2/2,\e_0(\de)^2)\},} 
where $\e_0(\de)$ is given by Lemma \ref{lem:vari}. Then there exists a unique continuous function $\Sg:\HH_{(\de)}\to\{\pm 1\}$ satisfying 
\EQ{ \label{def Sg}
 \CAS{u\in\HH_{(\de)},\ d_Q(u)\le\de_E &\implies  \Sg(u)=-\sign\la_1,\\ 
 u\in\HH_{(\de)},\ d_Q(u)\ge\de &\implies \Sg(u)=\sign K(u),}}
where we set $\sign 0=+1$. In addition, we have 
\EQ{
 \sup\{\|u\|_{H^1} \mid u\in\HH_{(\de_S)},\ \Sg(u)=+1\}\lec 1.}
\end{lem}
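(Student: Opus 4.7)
The plan is to build $\Sg$ by gluing two continuous sign functions defined on the overlapping neighborhoods $A:=\HH_{(\de)}\cap\{d_Q\le\de_E\}$ and $B:=\HH_{(\de)}\cap\{d_Q\ge\de\}$, whose union is $\HH_{(\de)}$. Uniqueness is immediate from the two defining properties, so the core of the work lies in agreement on the overlap $A\cap B=\{\de\le d_Q\le\de_E\}$ together with the $H^1$-bound.

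On $A$, Lemma~\ref{lem:eigen} yields $d_Q(u)\simeq|\la_1(u)|$, and since $u\in\HH_{(\de)}$ forces $d_Q(u)>0$ (the strict inequality in~\eqref{def HS} excludes $\cS_1$), we have $\la_1(u)\not=0$. Thus $\Sg_1(u):=-\sign\la_1(u)$ is a continuous $\{\pm 1\}$-valued function on $A$. On $B$, Lemma~\ref{lem:vari}(I) provides the dichotomy $K(u)\le-\ka_1(\de)<0$ or $K(u)\ge 0$, so $\Sg_2(u):=\sign K(u)$ (with the convention $\sign 0=+1$) is continuous on $B$ into $\{\pm 1\}$.

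The crucial step is to verify $\Sg_1(u)=\Sg_2(u)$ on $A\cap B$. Expanding $K$ at $Q$ using $K(Q)=0$, the identity $\lan -(2Q+Q^3)|\fy\ran=-\mu\lan Q|\psi\ran/2$ (derived from $L_+Q'=-2Q$, $L_+\fy=-\mu\psi$, and the orthogonalities $\lan Q|\fy\ran=\lan Q'|\psi\ran=0$), and the mass constraint $\int Q\ga_1\,dx=-\|w\|_2^2/2$ to eliminate the $-2Q$ part of the $\ga_1$-contribution, I obtain
\EQ{
K(u)=-\la_1\mu\lan Q|\psi\ran-\int Q^3\ga_1\,dx+O(\|w\|_{H^1}^2).
}
The sharp bound $\|\ga\|_{H^1}\lec|\la_1|$ coming from $J(u)<J(Q)+d_Q(u)^2/2$ (as in the proof of Lemma~\ref{lem:eigen}), combined with the smallness of $\de_E$ fixed in Section~\ref{sec:mod}, lets the leading term $-\la_1\mu\lan Q|\psi\ran$ dominate the remainder throughout $A$, giving $\sign K(u)=-\sign\la_1(u)$ on $A$ and hence on $A\cap B$. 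Gluing produces a continuous $\Sg:\HH_{(\de)}\to\{\pm 1\}$ satisfying both defining identities.

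For the $H^1$-bound, take $u\in\HH_{(\de_S)}$ with $\Sg(u)=+1$. If $d_Q(u)\le\de_E$, then $u$ is within $O(\de_E)$ of $\cS_1$ in $H^1$, so $\|u\|_{H^1}\lec 1$. Otherwise $d_Q(u)>\de_E\ge\de_S$, which forces $\Sg(u)=\sign K(u)=+1$, i.e.\ $K(u)\ge 0$; then from the identity $J=G+K/3$ in~\eqref{def G} and the bound $J(u)<J(Q)+\e_0(\de_S)^2\lec 1$,
\EQ{
\|u\|_{H^1}^2\lec G(u)=J(u)-K(u)/3\le J(u)\lec 1.
}
The main obstacle is the overlap agreement: since the a priori estimate $\|\ga\|_{H^1}\lec|\la_1|$ only controls the error $\int Q^3\ga_1\,dx$ by $O(|\la_1|)$, which is of the same order as the leading term, its domination requires simultaneously exploiting the mass constraint (to absorb the $-2Q$ piece), the sharp $\ga$-bound coming from the $\HH_{(\de)}$ condition, and the a priori smallness of the distance scale $\de_E$, in direct analogy with~\cite[Lemmas~4.4--4.5]{NakS}.
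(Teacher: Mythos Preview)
Your outline (define $\Sg_1=-\sign\la_1$ on $A$, $\Sg_2=\sign K$ on $B$, glue) is correct, as is your expansion of $K$ and the use of the mass constraint to absorb the $-2Q$ piece. The gap is in the final step: you assert that the leading term $-\la_1\mu\LR{Q|\psi}$ dominates $-\int Q^3\ga_1$ on all of $A$, but the tools you list cannot deliver this. The constraint $J(u)<J(Q)+d_Q(u)^2/2$ gives only $\|\ga\|_{H^1}\le C|\la_1|$ with an \emph{absolute} constant $C$, so $|\LR{Q^3|\ga_1}|\le C\|Q^3\|_2|\la_1|$, which is of the same order as $\mu\LR{Q|\psi}|\la_1|$. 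Shrinking $\de_E$ rescales both terms by the same factor and does nothing to the ratio; there is no reason for the specific inequality $C\|Q^3\|_2<\mu\LR{Q|\psi}$ to hold. You correctly identify this as the obstacle but then claim it is overcome without supplying a mechanism.

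The argument in \cite{NakS} is not static: it uses the flow together with the ejection Lemma~\ref{lem:eject}. Given $u_0\in A\cap B$, one runs the NLS solution $u(t)$ in the time direction satisfying the exiting condition (this exists since $\la_1$, $K$ are invariant under $u\mapsto\bar u$). Along the ejecting trajectory the solution remains in $A\cap B$ (since $d_Q$ stays in $[\de,\de_X]\subset[\de,\de_E]$ up to the $O(R^3)$ oscillation), so both $\Sg_1$ and $\Sg_2$ are constant by their respective continuity. The estimate $\sg K(u(t))\gec(e^{\mu t}-C_*)R_0$ from Lemma~\ref{lem:eject} then forces $\Sg_2(u(t))=\sg=\Sg_1(u(t))$ once $e^{\mu t}>C_*$, and this happens while $d_Q\le\de_X$ precisely because $\de\le\de_S=\de_X/(2C_*)$ allows the trajectory starting near $d_Q=\de$ to reach that regime; invariance along the flow and local constancy of both signs on the connected pieces of $A\cap B$ then propagate the equality back to $u_0$. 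The dynamical input is essential: at $t=0$ the $\ga$-contribution to $K$ genuinely competes with the $\la_1$-term, and it is only the exponential separation $|\la_1(t)|\simeq e^{\mu t}R_0$ versus $\|\ga(t)\|\lec R_0+e^{2\mu t}R_0^2$ during ejection that eventually tips the balance.
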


\section{Virial argument and the one-pass theorem}\label{sec:onepass}

In this section we establish the following one-pass theorem by means of a suitable virial argument.

\begin{thm}\label{thm:onepass}
There exist $0<\e_*\ll R_*\ll 1$ with the following property: let $u\in C([0,T);\HH)$ be a forward maximal solution of~\eqref{eq:Schr3} satisfying $M(u)=M(Q)$, $J(u)<J(Q)+\e^{2}$ and $d_{Q}(u(0))<R$ for some $\e\in(0,\e_*]$ and $R\in (2\e,R_*]$. 
Then one has the following dichotomy: either $T=\infty$ and $d_Q(u(t))<R+R^2$ for all $t\ge0$, 
or $d_Q(u(t))\ge R+R^2$ on $t_*\le t<T$ for some finite $t_{*}>0$. In the latter case, $\Sg(u(t))\in\{\pm 1\}$ does not change on $t_{*}\le t<T$; if it is $-1$, then $T<\I$, whereas if it is $+1$, then $T=\I$. 
\end{thm}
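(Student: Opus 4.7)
The plan is to argue by contradiction, following the one-pass strategy of \cite[\S 5]{NakS}, but replacing the finite-propagation-speed truncation used there by a saturated Ogawa-Tsutsumi \cite{OgTs} type virial identity that rests on the radial Sobolev inequality of Section \ref{appendix radSob}. Suppose the conclusion fails. Then either $d_Q(u)$ exits $\{d_Q<R+R^2\}$ and later re-enters it, or $d_Q(u(t))\ge R+R^2$ persists but $\Sg(u(t))$ flips at some time in $[t_*,T)$. Using the continuity of $d_Q$ and $\Sg$, together with Lemma \ref{lem:sign} (identifying $\Sg$ with $-\sign\la_1$ near $\cS$ and with $\sign K$ away from $\cS$), one extracts an excursion interval $[t_1,t_2]$ with $d_Q(u(t_j))\le R+R^2$, $d_Q(u(t))\ge R+R^2$ on $(t_1,t_2)$, and definite sign information at both endpoints, obtained by applying the ejection Lemma \ref{lem:eject} forward from $t_1$ and backward from $t_2$.

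Fix a large radius $\rho\gg 1$ and a smooth radial weight $w_\rho$ with $w_\rho(x)=|x|^2$ on $|x|\le\rho$, $0\le w_\rho\lesssim \rho^2$ and Hessian bounded above by $2\Id$ globally, and $|\nabla^k w_\rho|\lesssim \rho^{2-k}$ for $|x|\ge \rho$, $k=1,2,3,4$. Define
\[
V(t):=\int_{\R^3} w_\rho(x)\,|u(t,x)|^2\,dx.
\]
A direct computation from \eqref{eq:Schr3} yields $\ddot V(t)=8K(u(t))+\mathcal{R}_\rho(t)$, the error $\mathcal{R}_\rho$ being supported in $|x|\ge\rho$. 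The radial Sobolev bound $|u(x)|\lesssim |x|^{-1}\|u\|_{H^1}$ of Section \ref{appendix radSob} gives $|\mathcal{R}_\rho(t)|\lesssim \rho^{-2}(\|u\|_{H^1}^2+\|u\|_{H^1}^4)$, while $0\le V(t)\lesssim \rho^2\|u(t)\|_2^2\lesssim \rho^2$. Choose $\rho$ large, depending on the variational constants $\kappa_0,\kappa_1(\delta_S)$ of Lemma \ref{lem:vari}, so that $|\mathcal{R}_\rho(t)|\le \tfrac12\min(\kappa_1(\delta_S),\,\kappa_0\|\nabla u(t)\|_2^2)$ whenever $d_Q(u(t))\ge \delta_S$.

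To close, integrate $\ddot V$ twice over $[t_1,t_2]$. Near each endpoint, on a hyperbolic transit layer of length $\lesssim \mu^{-1}\log(R_*/R)$ where $d_Q\le \delta_X$, Lemma \ref{lem:eject} controls the $L^1_t$-norm of $K$ and of $\ddot V$ by $O(R_*)$. On the interior, where $d_Q\ge \delta_X$, Lemma \ref{lem:vari}(I) combined with the choice of $\rho$ yields $\sigma K(u(t))\ge \tfrac12\kappa_1(\delta_S)$, where $\sigma\in\{\pm 1\}$ is the common value of $\Sg$ on that segment. This produces
\[
\sigma\bigl[V(t_2)-V(t_1)-\dot V(t_1)(t_2-t_1)\bigr]\ge c\,\kappa_1(\delta_S)(t_2-t_1)^2 - O(R_*).
\]
Since $0\le V\lesssim \rho^2$, this forces $(t_2-t_1)\lesssim \rho/\sqrt{\kappa_1(\delta_S)}$. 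If $t_2$ is a re-entry, the modulation analysis of Section \ref{sec:mod} constrains both $V(t_j)$ and $\dot V(t_j)$ to agree with the (modulated) soliton quantities up to $O(R+R^2)$, so the integrated quadratic growth on the right-hand side cannot be absorbed once $R_*$ is small --- a contradiction. If instead $\Sg$ flips between $t_1$ and $t_2$, the same inequality applied to the two sub-intervals emanating from the flip time yields incompatible signs for $\dot V$ there. Hence one-pass holds and $\Sg$ is constant on $[t_*,T)$. For the final dichotomy, $\Sg=-1$ on $[t_*,T)$ gives $\ddot V\le -c<0$ with $V\ge 0$, forcing $V$ to vanish in finite time and thus $T<\infty$; $\Sg=+1$ yields $K\gtrsim \kappa_0\|\nabla u\|_2^2$, providing a uniform $H^1$ bound and $T=\infty$.

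The principal technical obstacle is the simultaneous tuning of the four parameters $\e_*,R_*,\delta_S,\rho^{-1}$ so that (i) the virial tail $\mathcal{R}_\rho$ is strictly dominated by the variational floor $\kappa_1(\delta_S)$ throughout the far regime; (ii) the soliton-modulated boundary data $V(t_j),\dot V(t_j)$, modulo the two modulation parameters $(\alpha,\theta)$ (here $\alpha=1$ is frozen by the mass constraint), differ from the integrated growth of $V$ by an amount dwarfing the excursion; and (iii) the hyperbolic timescale $\mu^{-1}\log R^{-1}$ from ejection does not overwhelm the virial budget $\rho/\sqrt{\kappa_1(\delta_S)}$. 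Absent the finite propagation speed used in the Klein-Gordon case \cite{NakS}, these estimates rely crucially on the radial Sobolev inequality to make $\mathcal{R}_\rho$ integrable on the tail, and on the two-parameter modulation framework of Section \ref{sec:mod} to control the boundary virial corrections.
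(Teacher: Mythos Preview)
Your overall architecture (ejection at the endpoints, variational floor in the middle, localized virial to bridge them) is right, and the $\sigma=-1$ half is essentially workable. But there is a genuine gap in the $\sigma=+1$ case: the claim $|\mathcal R_\rho(t)|\lesssim\rho^{-2}(\|u\|_{H^1}^2+\|u\|_{H^1}^4)$ is false. Expanding
\[
\ddot V-8K(u)=4\!\int(D^2w_\rho-2\Id)\nabla\bar u\cdot\nabla u-\!\int\Delta^2w_\rho\,|u|^2-\!\int(\Delta w_\rho-6)|u|^4,
\]
the last two terms are indeed $O(\rho^{-1})$ via radial Sobolev, but the kinetic error is merely $\le 0$ (from your Hessian hypothesis $D^2w_\rho\le 2\Id$) with no lower bound better than $-C\int_{|x|>\rho}|\nabla u|^2$. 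Radial Sobolev controls $|u|$ pointwise in the exterior, not $|\nabla u|$. You therefore obtain only $\ddot V\le 8K(u)+O(\rho^{-1})$, which is exactly what is needed when $\sigma=-1$, but for $\sigma=+1$ you need $\ddot V\ge c>0$ and the kinetic tail destroys that inequality. Since $\|u\|_{H^1}$ is bounded but the excursion interval has no a~priori length bound, you cannot make $\int_{|x|>\rho}|\nabla u|^2$ uniformly small either.

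This asymmetry is precisely why the paper abandons the second-order virial and works with the first-order saturated quantity $\langle\phi_m u\mid iu_r\rangle$, integrated once. In the $\sigma=+1$ case the identity is rewritten not as $2K(u)+\text{error}$ but as $2K(\chi_m u)+\text{error}$ with $\chi_m=\sqrt{\phi_m'}$, so that the kinetic and quartic pieces are truncated \emph{consistently}; the variational lower bound (Lemma~\ref{lem:vari}(II)) is then applied to the truncated function $\chi_m u$, using $I(\chi_m u)\le I(u)<J(Q)-\kappa$. The residual quartic tail is absorbed into $\|\nabla(\chi_m u)\|_2^2$ via the weighted radial Sobolev estimate \eqref{rad Sob sg+}, for which the slowly decaying profile $\phi(r)=r/(1+r)$ is essential. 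Your framework has no analogue of this step. Two smaller points: the boundary data $V(t_j),\dot V(t_j)$ carry hidden factors of $\rho$ (since $|\nabla w_\rho|\sim\rho$ and $w_\rho\lesssim\rho^2$), so the stated $O(R+R^2)$ is not what one gets without coupling $\rho$ to $R$; and the ``$\Sg$ flips while $d_Q\ge R+R^2$'' scenario is already excluded by the continuity of $\Sg$ in Lemma~\ref{lem:sign}, so that branch is superfluous.
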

In the Klein-Gordon case, we were able to use the same $R$ for the dichotomy because the distance function was strictly convex in $t$. 
For NLS it may exhibit oscillations on the order of $O(R^3)$, and so we need some room (we chose $R^2$) to ensure a true ejection from the small neighborhood. 

In Section~\ref{sec:scat} we will show that the solution in fact scatters to zero if $\Sg(u(t))=+1$ for large time. The proof of Theorem~\ref{thm:onepass} will take up this entire section. 
In fact, most work goes into proving the no-return statement, as the finite time blowup vs.~global existence dichotomy then follows easily. 
Indeed, the global existence in the $\Sg=+1$ region readily follows from the a priori $H^1$ bound in Lemma \ref{lem:sign}. 

We now turn to the details. 
We may assume that $u$ does not stay very close to $\cS$ for all $t>0$, so that we can apply the ejection Lemma \ref{lem:eject} at some time $t_*>0$. 
Recall Ogawa-Tsutsumi's {\it saturated virial identity} \cite[(3.5)]{OgTs}
\EQ{\label{eq:OgTsu}
 \pn\p_t\LR{\phi_m u|iu_r}
 \pt= \int_{\R^3} 2|u_r|^2\p_r\phi_m \, dx
  - |u|^2\De(\p_r/2+1/r)\phi_m\, dx
 \prQ -\int_{\R^3}  |u|^4(\p_r/2+1/r)\phi_m\, dx,}
where the smooth bounded radial function $\phi_m$ is chosen as follows: 
\EQ{
 \phi_m(r)=m\phi(r/m), \pq \phi_m(0)=0\le \phi_m'(r)\le 1=\phi_m'(0), \pq \phi_m''(r)\le 0.}
Notice that with this choice of $\phi_{m}$, eq.~\eqref{eq:OgTsu}  is not merely a cut-off of the virial identity, 
but rather a ``smooth interpolate'' of the latter with the Morawetz estimate for large $|x|$. 
This is indeed crucial for the following arguments, which are slightly more delicate than those in~\cite{OgTs}. 

The idea (as in \cite{NakS}) is now to combine the hyperbolic structure of Lemma~\ref{lem:eject} close to $\cS$ with the variational structure in Lemma~\ref{lem:vari} away from $\cS$, in order to control the virial identity through $K(u)$. We choose $\delta_{*}>0$ as the distance threshold between the two regions in $\HH_1$: for $d_Q(u)<\de_*$ we use the hyperbolic estimate in 
Lemma~\ref{lem:eject}, and for $d_Q(u)>\de_*$ we use the variational estimate in Lemma~\ref{lem:vari}. Hence $\de_*,\e_*,R_*$ should satisfy 
\EQ{ \label{cond0 Rede*}
 \e_* \ll R_* \ll \de_* \ll \de_S, \pq \e_*\le \e_0(\de_*).}
Below, we shall impose further  smallness conditions on $\de_*,R_*,\e_*$. 
Afterward, $R_*$ and then $\e_*$ need to  be made even smaller in order to satisfy the above conditions, depending on $\de_*$. 

Suppose towards a contradiction that $u$ solves the NLS equation~\eqref{eq:Schr3} on $[0,T)$ in $\HH_1$ satisfying for some $0<T_1<T_2<T_3<T$ and all $t\in(T_1,T_3)$, 
\EQ{
 d_Q(u(0))< R = d_{Q}(u(T_{1}))=d_{Q}(u(T_{3})) < d_Q(u(t)), \pq d_Q(u(T_2))\ge R+R^2,}
as well as $J(u)<J(Q)+\e^2$, for some $\e\in(0,\e_*]$ and $R\in(2\e,R_*]$. 

Lemma \ref{lem:sign} implies that $\sg:=\Sg(u(t))\in\{\pm 1\}$ is well-defined and constant on $T_1\le t\le T_3$. 

We apply the ejection Lemma~\ref{lem:eject} first from $t=T_1$ forward in time. Then by the lemma, there exists $T_1'\in(T_1,T_1+T_*R)$ such that $d_Q(u(t))$ increases for $t>T_1'$ until it reaches $\de_X$, and $d_Q(u(T_1'))=R+O(R^3)<d_Q(u(T_2))\ll\de_X$. Hence $T_1<T_1'<T_2$, and by the lemma there is $T_1''\in(T_1',T_3)$ such that $d_Q(u(t))$ increases exponentially on $(T_1',T_1'')$, $d_Q(u(T_1''))=\de_X$ and on $(T_1,T_1'')$, 
\EQ{ \label{eject T1}
 d_Q(u(t))\simeq e^{\mu(t-T_1)}R, \pq \sg K(u(t))\gec (e^{\mu(t-T_1)}-C_*)R.}
We can argue in the same way from $t=T_3$ backward in time to obtain a time interval $(T_3'',T_3)\subset(T_1'',T_3)$, so that $d_Q(u(T_3''))=\de_X$,
\EQ{ \label{eject T3}
 d_Q(u(t))\simeq e^{\mu(T_3-t)}R, \pq \sg K(u(t))\gec (e^{\mu(T_3-t)}-C_*)R \pq(T_3''<t<T_3),}
and $d_Q(u(t))$ is decreasing in the region $d_Q(u(t))\ge R+R^2$. 
Moreover, from any $\tau\in (T_1'',T_3'')$ where $d_Q(u(\tau))<\de_*$ is a local minimum, we can apply the ejection lemma both forward and backward in time, thereby obtaining an open interval $I_\tau\subset(T_1'',T_3'')$ so that $d_Q(u(\p I_\tau))=\{\de_X\}$, 
\EQ{ \label{eject tau}
 d_Q(u(t))\simeq e^{\mu|t-\tau|}d_Q(u(\tau)), \pq \sg K(u(t))\gec (e^{\mu|t-\tau|}-C_*)d_Q(u(\tau)) \pq(t\in I_\tau),}
and $d_Q(u(t))$ is monotone in the region $d_Q(u(t))\ge 2d_Q(u(\tau))$, which is the reason for $I_\tau\subset(T_1'',T_3'')$. Moreover, the monotonicity away from $\tau$ implies that any two  intervals $I_{\tau_1}$ and $I_{\tau_2}$ for distinct minimal points $\tau_1$ and $\tau_2$ are either disjoint or identical.  
Therefore, we have obtained disjoint open subintervals $I_1,\dots,I_n\subset (T_1,T_3)$ with $n\ge 2$, where we have either \eqref{eject T1}, \eqref{eject T3}, or \eqref{eject tau} with $\tau=\tau_j\in I_j$, and at the remaining times 
\EQ{
 t\in I':=(T_1,T_3)\setminus\Cu_{j=1}^n I_j,}
we have $d_Q(u(t))\ge\de_*$, so that we can apply Lemma \ref{lem:vari} to obtain 
\EQ{ \label{vari I'}
 \CAS{ K(u(t))\ge \min(\ka_1(\de_*),\ka_2\|\na u(t)\|_2^2) &(\sg=+1),\\
 K(u(t))\le-\ka_1(\de_*) &(\sg=-1).} \pq (t\in I')} 

\subsection{Virial estimate in the blow-up case $\sg=-1$}
In this case, we choose $\phi$ just as in \cite{OgTs}: 
\EQ{
 \phi(r) = \CAS{r &(r\le 1), \\ \frac{3}{2} &(r\ge 2),}}
and then rewrite \eqref{eq:OgTsu} in the form 
\EQ{
 \p_t\LR{\phi_m u|iu_r} = 2K(u) - 2\int |u_r|^2 f_{0,m}\, dx + \int [|u|^2f_{1,m}/r^2 + |u|^4f_{2,m}] \,  dx,}
where $f_{j,m}=f_j(r/m)$ are smooth functions supported on $r>m$, defined by 
\EQ{
 f_0=1-\phi_r, \pq f_1=-r^2\De(\p_r/2+1/r)\phi, \pq f_2=3/2-(\p_r/2+1/r)\phi.}
For the $L^4$ error term we use the radial Sobolev inequality as in \cite{OgTs}
\EQ{
 \|u\|_{L^4(r>m)}^4 \lec m^{-2}\|u\|_{L^2(r>m)}^3\|u_r\|_{L^2(r>m)},}
see \eqref{rad Sob sg-}. In order to absorb the kinetic term, noting that $f_0'\ge 0$ and $|f_2|\lec f_0$, we use the weighted version of the above inequality: 
\EQ{ \label{weight Sobolev}
 \pt\int_m^\I f_{0,m}(r)|u|^4(r) r^{2} \, dr
 =\int_m^\I\int_s^\I f_{0,m}'(s)|u|^4(r) r^{2}  \, dr ds
 \pr\lec\int_m^\I f_{0,m}'(s)s^{-2}\|u\|_{L^2(r>s)}^3\|u_r\|_{L^2(r>s)} \, ds
 \pr\lec\int_m^\I f_{0,m}'(s)\int_s^\I \la|u_r|^2(r) r^{2}\, drds
 \pn+\int_m^\I f_{0,m}'(s) \la^{-1} s^{-4}\| u \|_{L^2(r>s)}^6 \, ds
 \pr\le \la\int_m^\I f_{0,m}(r)| u_r|^2(r) r^{2} \, dr 
  + \la^{-1}m^{-4}\| u \|_{L^2(r>m)}^6,}
which holds uniformly for $\la>0$. Choosing $\la>0$ small (in terms of the constants in those Sobolev inequalities), we obtain 
\EQ{\label{eq:Knegmonotone}
 \p_t\LR{\phi_m u |iu_r}
& \le 2K(u) + O(m^{-4}\|u\|_{L^2(r>m)}^6) + O(m^{-2}\|u\|_{L^2(r>m)}^2)\\
&\le 2K(u) + O(m^{-2}).} 
 We can now prove Theorem~\ref{thm:onepass} in the blow-up case $\sg=-1$, by integrating \eqref{eq:Knegmonotone}, combined with \eqref{eject T1}--\eqref{vari I'}. We thus
  obtain 
\EQ{ \label{virial decrease}
 -[\LR{\phi_m u |iu_r}]_{T_1}^{T_3} \pt\gec \sum_{j=1}^n\int_{I_j}[(e^{\mu|t-\tau_j|}-C_*)d_Q(u(\tau_j))-Cm^{-2}]\, dt 
 \prQ+ \int_{I'}[\ka_1(\de_*)-Cm^{-2}]\, dt
 \pr\gec n\de_X \ge \de_X,}
provided that 
\EQ{
 m^{-2} \lec R_*, \pq m^{-2} \ll \ka_1(\de_*).}
On the other hand, since $d_Q(u(t))=R$ at $t=T_1,T_3$ and since $Q$ is exponentially decaying, 
\EQ{\label{eq:changemuch}
 \Bigl|[\LR{\phi_m u |iu_r}]^{T_3}_{{T_{1}}}\Bigr|  \lec R + m R^{2} \lec R_* \ll \de_X, 
}
if we choose $m=1/R$. 
Comparing this bound with~\eqref{virial decrease} leads to a contradiction. 
In conclusion, the solution $u(t)$ cannot return to the $R$-ball from the $\sg=-1$ side if we choose $\de_*,R_*,\e_*>0$ such that 
\EQ{
 R_*^2 \ll \ka_1(\de_*)}
and \eqref{cond0 Rede*} are satisfied. 
Therefore, if $u$ extends to $t\to+\I$, then $T_3=\I$ and so \eqref{virial decrease} with $m=1/R$ fixed implies 
\EQ{
 m\|u_r(t)\|_2 \gec -\LR{\phi_mu|iu_r} \to \I}
as $t\to\I$. Hence for large $t\gg 1$ we have $K(u(t))=3E(u(t))-\frac12\|\nabla u(t)\|_{2}^{2} \to-\I$. 
Thus for $T_1\ll\forall t_1<\forall t_2$, 
\EQ{
 [\LR{\phi_mu|iu_r}]_{t_1}^{t_2} \lec -\int_{t_1}^{t_2}\|u_r(t)\|_2^2\, dt,}
and so 
\EQ{
 m\|u_r(t_2)\|_{L^2_x} \gec -C\,m\|u_r(t_1)\|_{L^2_x} + \int_{t_1}^{t_2}\|u_r(t)\|_2^2 dt,}
which leads to  blow-up of $\|u_r(t)\|_{2}$ in finite time from the blowup exhibited by~$f'(t)\gec f^2(t)$. 
This concludes the proof of Theorem~\ref{thm:onepass} in the case $\sg=-1$. 

\subsection{Virial estimate in the scattering case $\sg=+1$} \label{subsec:K2pos}
In this case, the sign-definiteness in the variational region becomes more delicate. For simplicity, we make a specific choice\footnote{The important property of $\phi$ in the $\sg=+1$ case is that the convergence as $r\to\I$ is slow.} for $\phi$: 
\EQ{
 \phi(r)=\frac{r}{1+r},}
and rewrite \eqref{eq:OgTsu} in a different way: 
\EQ{ \label{sat-virial sg+}
 \p_t\LR{\phi_m u|iu_r} = 2 K(\chi_m u) + \int\big[\frac{|u|^2}{m^2}f_{3,m} + |u|^4f_{4,m}\big] \,  dx, }
where $\chi_m(r)=\chi(r/m)$ and $f_{j,m}(r)=f_j(r/m)$ are smooth functions defined by 
\EQ{
 \pt \chi(r) = \sqrt{\phi_r}(r) = \frac{1}{1+r}, 
 \pq f_3(r):=-2|\chi_r(r)|^2 + \frac{\phi_{rrr}(r)}{2} = \frac{1}{(1+r)^4}, 
 \pr -f_4(r):= -\big[ \frac{3(\phi_r)^2}{2}-\frac{\phi_r}{2}-\frac{\phi}{r} \big](r) = \frac{r(2r^2+7r+8)}{2(1+r)^4} \simeq \frac{\phi(r)^2}{r}}
In order to absorb the $|u|^4$ term, we use another radial Sobolev inequality \eqref{rad Sob sg+}
\EQ{
  \int_m^\I |u|^4rdr \lec \int_m^\I|u|^2r^2dr \int_m^\I|u_r|^2dr.}
The same argument as in \eqref{weight Sobolev} transforms it into 
\EQ{
 \int_0^\I |u|^4 \phi_m^2 r dr \lec \|u\|_{L^2}^2 \int_0^\I |u_r|^2 \phi_m^2 \, dr.}
Since $(\chi_m')^2\simeq f_{3,m}/m^2$, the right-hand side is estimated by 
\EQ{
 \int_0^\I |u_r|^2 \phi_m^2 dr \lec \int_0^\I (|(\chi_m u)_r|^2 + m^{-2}|u|^2f_{3,m})r^2 \, dr.}
Thus we obtain 
\EQ{ \label{u^4 error bd} 
 \int|u|^4f_{4,m}\, dx \lec m^{-1}\left[ \|\na(\chi_m u)\|_{L^2}^2 + \int \frac{|u|^2}{m^2}f_{3,m} \, dx\right].} 
In particular it is $O(m^{-1})$ since $u$ is bounded in $H^1$ by Lemma \ref{lem:sign}. 

In the hyperbolic region, the cut-off in $K$ has little impact, since by the same expansion as in \eqref{exp K2}, we have 
\EQ{
 \pn K(\chi_mu) \pt= K(Q+\chi_mw+(\chi_m-1)Q)
 \pr = -\la_1\mu\LR{Q|\psi}-\LR{2Q+Q^3|\chi_m\ga+(\chi_m-1)(Q+2\la_1\fy)}
 \prQQ +O(\|w\|_{H^1}^2+\|(\chi_m-1)Q\|_{H^1}^2)
 \pr = -\la_1\mu\LR{Q|\psi}-\LR{2Q+Q^3|\chi_m\ga}+O(\|w\|_{H^1}^2 + e^{-m}),}
thanks to the exponential decay of $Q$. 

In the variational region $d_{Q}(u)>\delta_{*}$, if $\|\na u\|_2\le\mu$ for some small $\mu>0$, then by Gagliardo-Nirenberg \eqref{GN}, we have \EQ{
 K(\chi_mu)\simeq \|\na\chi_mu\|_2^2.} 
Otherwise $\|\na u\|_2>\mu$ and so we have from \eqref{vari I'},
\EQ{ \label{Va-region}
 K(u(t)) \ge \ka_3(\de_*):=\min(\ka_1(\de_*),\mu) \pq (t\in I').}
Hence, if we choose $\e_*>0$ so that 
\EQ{ \label{cond eps}
  \e_*^2<\ka_3(\de_*)/6,}
then $d_Q(u)>\de_*$ and $J(u)<J(Q)+\e^2$ imply 
\EQ{ \label{cond I2}
 I(u) < J(Q) - \ka_3(\de_*)/3.}
Since $I(\chi_m u)\le I(u)$, Lemma \ref{lem:vari} (II) yields
\EQ{ \label{lowbd K2chi}
 K(\chi_m u) \ge \min(\ka_4(\de_*),\ka_0\|\na\chi_mu\|_2^2),}
where $\ka_4(\de_*):=\ka_2(\ka_3(\de_*)/3)$. This bound is valid including the case $\|\na u\|_2\le\mu$ (making $\ka_0$ smaller if necessary). 

Now choose $m>1$ so large that we have 
\EQ{
 m^{-1} \ll \min(\ka_4(\de_*),\ka_0)=:\ka_5(\de_*).}
Then \eqref{u^4 error bd} and \eqref{lowbd K2chi} imply
\EQ{
 2K(\chi_m u)+\int\left[\frac{|u|^2}{m^2}f_{3,m}+|u|^4f_{4,m}\right] \, dx \ge 0.}
Hence we obtain the monotonicity 
\EQ{
 \p_t\LR{\phi_m u|iu_r} \ge 0,}
in the variational region $d_Q(u)\ge \de_*$, i.e., for $t\in I'$, cf.~\eqref{vari I'}. 
By the same argument as in the case of $\sg=-1$, we now arrive at a contradiction by choosing $m\ge 1/R$ and $R_*,\de_*,\e_*>0$ so that \eqref{cond0 Rede*}, \eqref{cond eps} and 
\EQ{ \label{R* cond scat}
 R_*^2 \ll \ka_5(\de_*)}
are satisfied. This concludes the proof of Theorem~\ref{thm:onepass}.

\section{Scattering for $K>0$ solutions}\label{sec:scat}

In this section we establish the following scattering result, following the proof scheme of~\cite{KM1}. Let $R_{*}$ be a fixed choice of $R$ as in
Theorem~\ref{thm:onepass}. 

\begin{prop}\label{prop:scatKpos}
Let $\e_*,R_*>0$ be as in Theorem \ref{thm:onepass}. There exists $\cN<\I$ such that if a solution $u$ of the NLS equation~\eqref{eq:Schr3} on $[0,\I)$ satisfies $M(u)=M(Q)$ and $E(u)\le E(Q)+\e_*^2$, as well as $d_Q(u(t))\ge R_*$ and $\Sg(u(t))=+1$ for all $t\ge 0$, then $u$ scatters to $0$ as $t\to\I$ and $\|u\|_{L^4_t((0,\I);L^4_x)}\le \cN$. 
\end{prop}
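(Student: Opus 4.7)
The plan is to use the Kenig-Merle concentration-compactness and rigidity scheme, as in~\cite{KM1} and~\cite{HolR}. Small-data scattering from Strichartz provides a threshold $\de_0>0$ below which $\|u(0)\|_{H^1}<\de_0$ implies $\|u\|_{L^4_{t,x}([0,\I))}\lec \|u(0)\|_{H^1}$. Suppose the conclusion of the proposition fails: there is a sequence of solutions $u_n$ satisfying all the hypotheses but with $\|u_n\|_{L^4_{t,x}([0,\I))}\to\I$. By Lemma~\ref{lem:sign} we have $\sup_n\|u_n\|_{L^\I_t H^1_x}\lec 1$. Applying the Bahouri-G\'erard profile decomposition of Section~\ref{sec:AppendixA} to $\{u_n(0)\}$, together with the perturbation lemma, I would extract a critical element $u_\I$: a non-scattering forward solution of~\eqref{eq:Schr3} with minimal $L^4_{t,x}$-norm, whose forward orbit is precompact in $H^1_{\mathrm{rad}}$ modulo the phase and scaling symmetries.

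Second, I verify that $u_\I$ inherits every hypothesis of the proposition. Working with the scaling-invariant $M(u)E(u)$ as in~\cite{HolR}, I rescale so that $M(u_\I)=M(Q)$, and the asymptotic orthogonality of profiles yields $E(u_\I)\le E(Q)+\e_*^2$, with the remaining profiles trivial by minimality. Because $d_Q$ and $\Sg$ are continuous on $\HH_{(\de_S)}$ (Lemma~\ref{lem:sign}) and each $u_n$ satisfies $d_Q(u_n(t))\ge R_*$ and $\Sg(u_n(t))=+1$ throughout $[0,\I)$, a stability-of-flow argument using the perturbation lemma transfers both conditions to $u_\I$ on its maximal forward interval. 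Together with Theorem~\ref{thm:onepass} this ensures that $u_\I$ never approaches $\cS$.

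Third, the rigidity step exploits precompactness: for each $\eta>0$ there is $\rho=\rho(\eta)<\I$ with
\[
 \sup_{t\ge 0}\int_{|x|>\rho}\bigl(|\na u_\I|^2+|u_\I|^2+|u_\I|^4\bigr)\,dx\le\eta,
\]
and $\|u_\I(t)\|_{H^1}\ge c_0>0$ uniformly, since otherwise small-data theory would imply scattering. Lemma~\ref{lem:vari}(I) with $\de=R_*$ then gives the uniform lower bound
\[
 K(u_\I(t))\ge \ka:=\min\bigl(\ka_1(R_*),\ka_0 c_0^2\bigr)>0.
\]
Let $\phi_\rho$ be a smooth radial truncation of $|x|^2/2$ that equals $|x|^2/2$ for $|x|\le\rho$ and is constant for $|x|\ge 2\rho$. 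The truncated virial computation gives
\[
 \ddot V_\rho(t):=\frac{d^2}{dt^2}\int\phi_\rho|u_\I|^2\,dx = 4K(u_\I(t)) + \err(t),
\]
with $|\err(t)|\lec\eta+\rho^{-2}$ by the tail bound. Choosing $\eta$ and $\rho^{-1}$ small compared to $\ka$ forces $\ddot V_\rho\ge 2\ka$, whence $\dot V_\rho(T)\to\I$; however, $|\dot V_\rho(t)|\lec\rho\,\|u_\I\|_{H^1}^2$ stays uniformly bounded, contradiction.

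The main obstacle is step two: showing that the critical element inherits both $d_Q(u_\I(t))\ge R_*$ and $\Sg(u_\I(t))=+1$ throughout its forward lifespan. Since $\Sg$ is only defined in the thin strip $\HH_{(\de)}$ and can flip sign only when the solution passes through $\cS$, one must use Theorem~\ref{thm:onepass} together with continuous dependence on initial data (the perturbation lemma in Section~\ref{sec:AppendixA}) to exclude both scenarios for $u_\I$: entering a small neighborhood of $\cS$ (which would force approximating $u_n$ to do the same, violating $d_Q(u_n)\ge R_*$) and flipping to $\Sg=-1$ (which by Lemma~\ref{lem:vari}(I) would force $K(u_n(t))<-\ka_1(R_*)$ for large $n$, again contradicting the hypotheses). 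Handling the scaling freedom through the $M\cdot E$ formulation and propagating the orthogonality of profiles through the nonlinear flow are the closely related technical points.
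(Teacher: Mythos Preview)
Your proposal is correct and follows the same Kenig--Merle scheme as the paper, with two technical differences worth noting.

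For the rigidity step, the paper does not use the standard truncated virial with a cutoff of $|x|^2/2$; instead it integrates the \emph{saturated} virial identity~\eqref{sat-virial sg+} from Section~\ref{subsec:K2pos}, where the lower bound on $K(\chi_m u)$ comes from precompactness of the forward orbit. Your version with the truncated $|x|^2/2$ virial also works once precompactness gives uniform tail bounds, so this is a matter of taste; the paper's choice simply reuses the machinery already built for the one-pass theorem.

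For the step you correctly flag as the main obstacle---showing the critical element inherits $d_Q\ge R_*$ and $\Sg=+1$---the paper's argument is more concrete than ``stability of flow.'' It first time-translates each $u_n$ so that $d_Q(u_n(0))\ge\de_X$ and $K(u_n(0))>6\e_*^2$; this forces $G(v^j)<J(Q)-\e_*^2$ for every linear profile via~\eqref{H1 orth}, hence $K(u^j)\ge0$ near $t_\I^j$, and then~\cite{HolR} plus energy partition shows all profiles except one have $J(u^j)\lec\e^2$. The remaining profile $u^0$ is rescaled to mass $M(Q)$ and a case analysis on $t_\I^0\in\{-\I,+\I\}\cup\R$ is performed. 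The key point you should make explicit is that the perturbation lemma only yields $L^\I_tL^2_x$ closeness, not $H^1$, so if $\tilde u^0$ ever satisfies $d_Q(\tilde u^0)\le 2\e_*$ one must pass through Lemma~\ref{lem:dist L2} to conclude $\dist_{L^2}(u_n(t),\cS_1)\lec\e_*$ and contradict~\eqref{L2 dist bd}. Otherwise $\Sg(\tilde u^0)=+1$ persists, and the ejection Lemma~\ref{lem:eject} (not just continuity) is invoked to push $d_Q(\tilde u^0)$ up to $R_*$ after a further time shift.
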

\begin{proof}
Let $\UU$ be the collection of all solutions $u$ of \eqref{eq:Schr3} on $[0,\I)$ satisfying 
\EQ{ 
 \pt M(u)=M(Q),\pq
  E(u) \le E(Q)+\e_*^2, \pq d_Q(u[0,\I))\subset [R_*,\I), \pq 
 \pr \Sg(u(t)) =+1 \pq(t\ge 0).}
Indeed, Lemma \ref{lem:sign} implies that $\Sg(u(t))=+1$ is preserved as long as $d_Q(u(t))\ge R_*>2\e$, and $u(t)$ is a forward global solution uniformly bounded in $H^1$. Moreover, the lower bound on $d_Q$ implies, by Lemma \ref{lem:dist L2} that 
\EQ{ \label{L2 dist bd}
 \inf_{t\ge 0} \dist_{L^2}(u(t),\cS_1) \gec \min(R_*,\de_E^2) = R_*.}
For each $E>0$, let $\cN(E)$ be defined as 
\EQ{ \label{def ME}
 \cN(E):=\sup\{\|u\|_{L^4_t((0,\I);L^4_x)} \mid u\in\UU,\ E(u)\le E\}}
See Section~\ref{sec:AppendixA}  for the relevance of $L^{4}_{tx}$. 
We know by \cite{HolR} that $\cN(E)<\I$ for $E<E(Q)$. 
In order to extend this property to $E(Q)+\e_*^2$, put 
\EQ{ \label{def Estar}
 E^\star=\sup\{E>0 \mid \cN(E)<\I\}}
Then $E^{\star}\ge E(Q)$ and assume towards a contradiction that 
\EQ{\label{eq:contraE}
 E^\star=E(Q)+\e^{2}<E(Q)+\e_*^2,} 
where $0\le \e <\e_*$.  
We consider a nonlinear profile decomposition in the sense of Bahouri-G\'erard~\cite{BaG} for any sequence $u_n\in\UU$ satisfying 
\EQ{ \label{def un} 
 E(u_n)\to E^\star, \pq \|u_n\|_{L^4_t((0,\I);L^4_x)}\to\I.} 
We are going to show that the remainder in the decomposition vanishes in a suitable sense and that there is only one profile, which is a critical element, i.e.,
\EQ{ \label{crit elm}
 u_\star\in\UU,\pq E(u_\star)=E^\star, \pq \|u_\star\|_{L^4_t((0,\I);L^4_x)}=\I.}

Before starting the decomposition for $u_n$, we translate $u_n$ in $t$ to achieve 
\EQ{ \label{eq:Kunmu}
 d_Q(u_n(0))\ge\de_X, \pq K(u_n(0)) > 6\e_*^2.}
Since $d_Q(u_n(t))$ will never come down to $R_*$, the ejection Lemma \ref{lem:eject} gives $0<T_n\lec \log(\de_X/R_*)$ so that $d_Q(u_n(T_n))\ge\de_X$. Since $\Sg=+1$, we have a uniform $H^1$ bound on $u_n$ and so, in view of Lemma \ref{lem:simplescat} together with $\|u_n\|_{L^4_{t,x}(0,\I)}\to\I$, we have $\|\na u_n(T_n)\|_2>\mu$. Hence by Lemma \ref{lem:vari}, 
\EQ{
 K(u_n(T_n))\ge \min(\ka_1(\de_X),\ka_0\mu^2)\ge \ka_3(\de_*)>6\e_*^2,}
where $\ka_3$ is defined in \eqref{Va-region} and we used the condition \eqref{cond eps} on $\e_*$. Translating $u_n:=u_n(t-T_n)$, we may assume \eqref{eq:Kunmu}.

Now apply Proposition~\ref{prop:BG} to the  sequence $\{u_n(0)\}$. This yields, cf.~\eqref{eq:linprofile}
\EQ{\label{eq:profdec}
e^{-it\Delta} u_n(0) = \sum_{0\le j<k} e^{-i(t+t_{n}^{j})\Delta}  v^j + \ga_n^k(t)
}
We take here $k$ sufficiently large so that $\ga_{n}^{k}$ is  small in the sense of~\eqref{orth}; it will always be assumed that $n$ is large. 
The first step consists in showing that due to~\eqref{eq:contraE} one has $v^{j}=0$ for all but one $j$ as well as
\EQ{
\lim_{k\to\I} \limsup_{n\to\I} \|\ga_{n}^{k}\|_{L^{\I}_{t}H^{1}} =0
}
By the partition property \eqref{H1 orth} one has 
\EQ{\label{eq:mass decomp}
M(u_{n}) &= \sum_{0\le j<k} M(v^{j}) + M(\ga_{n}^{k}) + o(1) 
}
as $n\to\I$.   Assume that $M(v^{0})>0$ and $M(v^{1})>0$. Then $M(v^{j})<M(Q)$ for each $j$. 
From \eqref{eq:Kunmu}, $J(u_n)<J(Q)+\e_*^2$, and \eqref{H1 orth}, we infer that 
\EQ{\nn 
 \pt J(Q)-\e_*^2>  \limsup_{n\to\I} [J(u_n)-K(u_n(0))/3] 
 \pr\ge \limsup_{n\to\I} G(u_n(0)) =  \sum_{j<k}G(v^j)+\limsup_{n\to\I} G(\ga_n^j),}
where we used that $G(\fy)\simeq \|\fy\|_{H^1}^2$ is conserved by the linear flow. 
Since $G$ is positive definite, we obtain $G(v^j)<J(Q)-\e_*^2$ for all $j$, 
 which implies, via the minimizing property \eqref{cnstr min} and the invariance of~$G$ under the linear flow, 
 that $K(e^{-it\Delta}v^j)\ge 0$ for all $t\in\R$. 
 Let $t_n^j\to t_\I^j\in[-\I,\I]$ and let $u^j$ be the nonlinear profile associated with $v^{j}$, i.e., that solution of~\eqref{eq:Schr3}  satisfying 
\EQ{ \label{v-u at 0}
 \lim_{t\to t_\I^j}\| u^j(t)-  e^{-it\Delta}v^j\|_{H^{1}}=0,}
which exists at least locally around $t=t_\I^j$, either by solving the Cauchy problem at $t_\I^j\in\R$ or by applying the wave 
operator at $t_\I^j\in\{\pm\I\}$.  
By the preceding, 
\EQ{\label{eq:Muj}
M(u^{j})=M(v^{j})<M(Q),\quad K(u^{j}(t))\ge 0 \quad\forall\; t\text{\ near\ }t^{j}_{\I}
}
In particular,  $E(u^{j})\ge0$. 
We have the following partition of the nonlinear energy
\EQ{ \label{eq:ene decomp}
 E(u_{n}) &= \sum_{0\le j<k} E(u^{j}) + E(\ga_{n}^{k}(0)) + o(1)
}
as $n\to\I$. Since 
\[
\| u\|_{4}^{4} \lec \|\nabla u\|_{2}^{2} \| u\|_{3}^{2}
\]
and \eqref{orth} 
imply that $E(\ga_{n}^{k})\simeq \|\ga_{n}^{k} \|_{H^{1}}^{2}$ provided $k$ is large, we conclude that
\EQ{\label{eq:Euj}
0\le E(u^{j}) \le \lim_{n\to\I}E(u_{n}) = E(Q)+\e^{2} \quad \forall\; j.
}
If $E(u^{j})<E(Q)$ for all $j$, then we conclude by the preceding and~\cite{HolR}
that each $u^{j}$ exists globally and scatters with $\| u^{j}\|_{S(\R)}<\infty$,  where 
\EQ{
 S(I) := L^\I_tH^1_x(I) \cap L^2_t W^{1,6}_x(I)} 
is the norm of the Strichartz spaces.
 But then one has the following {\it nonlinear profile decomposition}, 
 \EQ{\label{eq:nonlinprofile}
 u_n = \sum_{j<k} u^j_n+\ga_n^k+\err_{n}^{k},\pq u^j_n:=u^j(t+t_n^j)}
 where $\ga_{n}^{k}$ are as in~\eqref{eq:profdec}, and the errors $\err_{n}^{k}$ satisfy
\EQ{\label{eq:errnk}
\lim_{k\to\infty} \limsup_{n\to\I} \| \err_{n}^{k} \|_{S_0(\R)} =0
} 
where 
\EQ{
 S_0(I) = L^\I_tL^2_x(I) \cap L^4_{t,x}(I)}
is a subcritical Strichartz norm. To see this, first let 
$w_n^j$ be the nonlinear solution with the initial condition $w_n^j(0)=e^{-it_n^j\De}v^j$. By the local theory
\EQ{\nn 
 w_n^j(t)-u^j(t+t_n^j)\to 0 \pq(n\to\I)}
in $S(I)$ for some interval $I\ni 0$. 
Now apply Lemma~\ref{PerturLem} with $$t_{0}=0, \quad u=u_{n}, \quad w=\sum_{j<k} u^j_n+\ga_n^k$$ to conclude~\eqref{eq:nonlinprofile}. 
In order to apply this lemma, one needs to verify that  the nonlinear interactions between  all $\{u^{0}, u^{1}, \ldots, \ga_{n}^{k}\}$, as well as $|\ga_{n}^{k}|^{2} \ga_{n}^{k}$, 
vanish in the $L^{\frac85}_{t}L^{\frac43}_{x}$-norm in the limits as~$n\to\I$ and then $k\to\I$. 
However, as usual, this follows by expanding the cubic nonlinearity and using the Strichartz control available for each of these functions.  
Since~\eqref{eq:nonlinprofile} contradicts~\eqref{def un}, we must have that at least one $u^{j}$, and therefore exactly one, say $u^{0}$, satisfies $E(u^{0})\ge E(Q)$.
But then $E(u^{j})\lec \eps^{2}\ll E(Q)$ for all $j\ge1$ and  these $u^{j}$ are globally defined and scatter. If $u^0$ also scatters, then by the same reasoning we get a contradiction. Hence $u^0$ does not scatter either for $t\to\I$ or for $t\to-\I$. 

Let $\tilde u^{0}(t,x)=\alpha u^{0}(\al^2 t,\alpha x)$ be the rescaling of $u^{0}$ such that $M(\tilde u^{0})=M(Q)$. Then
\EQ{\label{eq:Etildeuj}
 E(\tilde u^{0})\le \frac{M(u^{0})}{M(Q)}(E(Q)+\e^{2}),} 
so we have $M(u^{0})>M(Q)-O(\e^{2})$ for otherwise one has global scattering for $u^0$ from~\cite{HolR} and therefore a contradiction via~\eqref{eq:nonlinprofile}. Hence $\al=1+O(\e^2)$. 
In view of~\eqref{eq:mass decomp} and \eqref{eq:ene decomp}, $J(u^{j})\lec \e^{2}$ for all $j\ge1$. Since
\[
 J(u^{j})\ge G(u^{j}(t)) \ge \|u^{j}(t)\|_{H^1}^{2}/6 
\]
for all $t$ and any $j\ge1$, we conclude that $\| u^{j}\|_{L^{\I}H^{1}}\lec \e$ for each $j\ge1$. In fact, the same logic with the asymptotic orthogonality yields the following stronger bound
\EQ{\label{eq:sumujsmall}
  \sum_{j\ge1} \|u^{j}\|_{L^{\I}_tH^{1}_x}^{2}+\sup_{k\ge 1}\limsup_{n\to\I}\|\ga_n^k\|_{L^\I_tH^1_x}^2\lec \e^{2},
}
which will be crucial later.

If $t_{\I}^{0}=\I$, then by the definition $u^0$ scatters as $t\to\I$, and so one obtains a contradiction as before via \eqref{eq:nonlinprofile}. 

If $t_{\I}^{0}=-\I$, then $\ti u^{0}$ scatters as $t\to-\I$, 
and we face the following dichotomy: 
either $\ti u^{0}$ satisfies $d_{Q}(\ti u^{0}(t))> 2\e_*$ on its entire interval of existence, or there exists some time $t_{*}$ in the interval of existence of $u^0$ at which 
\EQ{ \label{u^0 comes to Q}
  d_{Q}(\ti u^{0}(\al^{-2}t_{*}))\le 2\e_*.} 
In the former case, we infer that $\Sg(\ti u^{0}(t))=+1$ is preserved from $t=-\I$, therefore $\ti u^{0}$ exists globally and satisfies $d_{Q}(\ti u^{0}(t))>R_{*}$ for large $t>T'$; 
for this last statement one invokes Lemma~\ref{lem:eject} to obtain ejection, if needed. 
Then $\ti u^0(t-T')$ is a critical element, since $\ti u^0$ does not scatter for $t\to\I$. In the latter case \eqref{u^0 comes to Q}, we have 
\[ 
 \|u^{0}\|_{L^{4}((-\I,t_{*}+C];L^{4}_{x})}<\I \text{\ with\ $C\gec 1$,}\] 
and thus also~\eqref{eq:nonlinprofile} on the time interval $(-\I,t_{*}-t^{0}_{n})$. But then, by~\eqref{eq:sumujsmall},  
\[
 \dist_{L^2}(u_{n}(t_{*}-t^{0}_{n}),\cS_1) 
 \le d_Q(\ti u^{0}(\al^{-2}t_{*}))+ O(\e) \lec \e_* \ll R_*,
\]
and $t_*-t_n^0\to\I$, contrary to \eqref{L2 dist bd} for $u_n$. 

The only remaining case is $t^{0}_{\I}\in \R$. In that case we use the general fact that the nonlinear
profile decomposition~\eqref{eq:nonlinprofile} holds locally around~$t=0$. In addition to that, we have by \eqref{eq:Kunmu} and \eqref{eq:sumujsmall}, 
\EQ{
 \pt d_Q(\ti u^0(\al^{-2}t_\I^0))\ge \limsup_{n\to\I} d_Q(u_n(0)) - O(\e) > \de_S,}
and so $\Sg(\ti u^0(\al^{-2}t_\I^0))=+1$ since $K(\ti u^0(\al^{-2}t_\I^0))=\al K(u^0(t_\I^0))\ge 0$. If $u^0$ scatters as $t\to\I$, then we obtain a contradiction as before via \eqref{eq:nonlinprofile}. Hence $\ti u^0$ does not scatter for $t\to\I$.  
If $d_{Q}(\al^{-2}\ti u^{0}(t))>2\e_*$ for all $t\ge t_\I^0$ on the maximal
interval of existence of~$\ti u^{0}$, then $\Sg(\ti u^{0})=+1$ for these $t$, which implies that $\ti u^0$ is forward global and a critical element after time translation (use the ejection Lemma \ref{lem:eject} if needed to conclude that $d_{Q}(\ti u^{0}(t))\ge R_{*}$ for large $t$). 
Otherwise, there exists $t_{*}>t_\I^0$ minimal so that $d_{Q}(\ti u^{0}(\al^{-2}t_{*})<2\e_*$. But then~\eqref{eq:nonlinprofile} remains valid near~$t_{*}$ and one again obtains a contradiction to
 $$\dist_{L^2}(u_{n}(t_*-t_\I^0),\cS_1)\gec R_{*}\gg\e_*.$$

Therefore, the conclusion is that $u^0$ is a critical element after time translation, $E(u^{0})=E^{\star}$, $M(u^{0})=M(Q)$, and so $v^{j}=0$ for all $j\ge1$ as well as $\ga_{n}^{k}\to0$ in the energy sense as $n\to\I$. Hence (after extracting a subsequence)
\EQ{
 \lim_{n\to\I} \|u_n(T_n)-u^0(t_n^0)\|_{H^1} = 0,}
where $T_n$ is the time shift for \eqref{eq:Kunmu}. Both $T_n$ and $t_n^0$ are bounded above for $n\to\I$. If $t_n^0\to-\I$ then $u^0$ scatters as $t\to-\I$, and the local theory of the wave operator implies that $\|u_n\|_{L^4_{t,x}(-\I,T_n)}\to 0$. 

Applying the above result to the sequence $u_n:=u^{0}(t+\tau_{n})$ for arbitrary $\tau_{n}\to\I$, one now concludes that the forward trajectory $\{u^{0}(t)\}_{t\ge0}$ is precompact in~$H^{1}$, since $t_n^0\to-\I$ would imply that $\|u^0\|_{L^4_{t,x}(-\I,\tau_n)}\to 0$ which is a contradiction. 

Finally,  integrating  the saturated virial identity~\eqref{sat-virial sg+} from Section~\ref{subsec:K2pos} between some  positive time and $t=\I$ now proves that such a critical element cannot exist. Note that $K(\chi_mu)$ has a positive lower bound in the variational region (for large $m$) by the precompactness of the forward trajectory of the critical element. 
This shows that $E^\star<E(Q)+\e_*^{2}$ is impossible, concluding the proof. 
\end{proof}

\section{The proof of Theorem \ref{thm:main}}\label{sec:proofmain}

Let $\HH^{\e}$ and $\HH^{\e}_{\alpha}$ be as defined in~\eqref{eq:HHe}, \eqref{eq:HHea}, respectively. 
We  introduce the following subsets according to the global behavior of the solution $u(t)$ of~\eqref{eq:Schr3}: for $\si=\pm$ respectively, 
\EQ{\label{eq:STB def}
 \pt \cS_\si^{\e}=\{u(0)\in\HH^\e_{1} \mid \text{$u(t)$ scatters as $\si t\to\I$}\},
 \pr \T_\si^{\e}=\{ u(0)\in\HH^\e_{1} \mid   \text{$u(t)$ trapped by $\cS_{1}$ for $\si t\to\I$}\}, 
 \pr \B_\si^\e=\{ u(0)\in\HH^\e_{1} \mid   \text{$u(t)$ blows up in $\si t>0$}\}.}
The trapping for $\T_+^\e$ can be characterized as follows, with any $R\in(2\e,R_*)$: 
\EQ{\nn 
 \exists T>0, \pq \forall t>T,\pq d_Q(u(t))<R.}
Obviously those sets are increasing in $\e$, and have the conjugation property
\EQ{\nn 
 X_\mp^\e = \{\bar{u}(0)\in\HH^\e_{1} \mid  u(0)\in X_\pm^\e\},}
for $X=\cS,\T,\B$. Moreover, $\cS_+$ and $\T_+$ are forward invariant by the flow of NLS, while $\cS_-$ and $\T_-$ are backward invariant. 
By what we have done so far 
\EQ{\nn 
 \HH^\e_{1} = \cS_+^\e \cup \T_+^\e \cup \B_+^\e = \cS_-^\e \cup \T_-^\e \cup \B_-^\e,}
with the union being disjoint for each sign. 
It follows from the scattering theory that $\cS_\pm^\e$ are open (relatively, in~$\HH^{\e}_{1}$). 
We claim the same for $\B_\pm^\e$, which is not a general fact. 
Thus, suppose that $u(t)$ blows up at $0<T<\I$. This is equivalent to $\|\nabla u(t)\|_{2}\to\I$ as $t\to T-$. Since
$$
K(u(t))=3E(u) - \frac12 \|\nabla u(t)\|_{2}^{2}
$$
it follows that $K(u(t))\to-\infty$ as $t\to T-$. We now claim that if $v(0)\in \HH^{\e}_{1}$ with $\|v(0)-u(T')\|_{H^{1}}<1$ where $T'<T$ is fixed and very close to~$T$,
then $v(0)$ leads to a solution $v(t)$ which blows up in finite time. Suppose not. Then from the energy constraint $E(v(0))<E(Q)+\e^{2}$ we know that $K(v(t))$ can only
change sign by coming very close to $\cS_{1}$. But since~\eqref{eq:Knegmonotone}, with $m\gg1$ fixed, implies that $\LR{\phi_{m}v|iv_{r}}$ decreases
on any time  interval where $K(v(t))\le -1$, and since  $\LR{\phi_{m}v|iv_{r}}=0$ on $\cS_{1}$, it follows that $K(v(t))\le -1$ for all $t\ge0$. 
But then we obtain a contradiction via~\eqref{eq:Knegmonotone}. Thus $v$ blows up in finite time as claimed. 
Therefore,  $\B_\pm^\e$ are also open, so $\T_\pm^\e$ are relatively closed in $\HH^\e_{1}$.  

Since $\B^\e_+$ and $\cS_+^\e$ are disjoint open, they are separated by $\T_+^\e$, that is, any two points from $\cS_+^\e$ and $\B_+^\e$ cannot be joined by a curve in $\HH^{\e}_{1}$ without passing through~$\T_+^\e$. 

In a small ball around $\cS_{1}$, it is easy to see by means of the linearized flow that the open intersections $\B^\e_\pm\cap \cS_\mp^\e$, $\B^\e_+\cap \B^\e_-$ and $\cS_+^\e\cap \cS_-^\e$ are all non-empty for any $\e>0$. Examples of  solutions belonging to the first set are given by data in~\eqref{eq:lam+-}
\EQ{\label{eq:BSsol}
 \la_+(0)=-\la_-(0)=\e\rho, \pq \ga(0)=\al Q',}
 with real parameters, whence 
 \EQ{
 u(0) &= Q + \e \rho \GZ_{+} - \e \rho \GZ_{-}  + \al Q'\\
 & = Q + \al Q' -2i  \e \rho  \psi}
and thus
\EQ{
M(u) &= M(Q + \al Q') + 4\e^{2} \rho^{2} M(\psi) \\
 & = M(Q) + \al \LR{Q|Q'} + \al^{2} M(Q') + 4\e^{2}\rho^{2} M(\psi)
 }
where $|\rho|\ll 1$, and $\al>0$ should be chosen such that $M(u)=M(Q)$. 
Then the linearized flow of $\lambda_{1}(t)$ is $\lambda_{1}^{(0)}(t)=\e\rho \sinh(\mu t)$ and $\|\ga^{(0)}\|_2=O(\e^2\rho^2)$. In fact, the estimates \eqref{eq:eject_est}
show that the true $\lambda_{1}(t),\ga(t)$ deviate from these only by quadratic corrections $O(\e^2\rho^2)$. 
Therefore, at exit time from the $\delta_{X}$-ball one has $t \Sg(u(t))$ of a fixed sign. 
Hence, choosing the sign of $\rho$ correctly leads to solutions  $u\in \B^\e_-\cap \cS_+^\e$, or $\tilde u\in \B^\e_+\cap \cS_-^\e$, respectively. 

The analogous construction with $\pm \cosh(\mu t)$ instead of $\sinh(\mu t)$ 
furnishes examples of solutions $u_{\pm}$ belonging to 
 $\B_+^\e\cap \B_-^\e$ and $\cS_+^\e\cap \cS_-^\e$, respectively. It is clear that these constructions actually give open nonempty 
 sets of solutions relative to~$\HH^{\e}_{1}$ (indeed, we can perturb $\ga(0)$ within $O(\e^2\rho^2)$). 
 
 Next, note that by construction $\|u(t)-u_{+}(t)\|_{H^1}\ll \e$ while $d_{Q}(u(t))\gg \e$ for some large times (but before exit from the $\delta_{X}$-ball). 
It follows that we may connect $u(t)$ with $u_{+}(t)$ by a curve segment $\Gamma$ within $\HH^{\e}_{1}$ and within the set $\Sg<0$ and $d_Q(u)\gg\e$. 
Since $u(t)\in\cS_{-}^{\e}$ and $u_{+}(t)\in \B_{-}^{\e}$, there exists $p_{0}\in \T_{-}^{\e}\cap\Ga$. Since the solution starting from $p_{0}$ enters the $3\e$-ball around $\cS_{1}$ as $t\to-\I$, and initially $p_{0}$ is much further away and also $\Sg(p_{0})=-1$, we conclude by the one-pass theorem that $p\in \B^\e_+$. Hence $\T^\e_-\cap \B^\e_+$ is non-empty as well. 
 In the same way, we can find a point on the curve connecting $u(t)$ and $u_{-}(t)$ for some $t<0$, which is in $\T^\e_+\cap \cS_-^\e$. 
 Therefore, $\T^\e_\pm\cap \B^\e_\mp$ and $\T^\e_\pm\cap \cS_\mp^\e$ are both not empty. Taking the limit $\e\to+0$, it is easy to observe 
 that they contain infinitely many points on different energy levels. 

The sets $\T^{\e}_{+}\cap \T^{\e}_{-}$ contain all of $\cS_{1}$, and are therefore not empty. By considering curves on the hyperplane $\{\Im u=0\}$ connecting $u_{+}(0)$ with $u_{-}(0)$ (the solutions from before) and which are disjoint from~$\cS_{1}$, we obtain infinitely many points in~$\T^{\e}_{+}\cap \T^{\e}_{-}\setminus\cS_{1}$. 

By a simple scattering theory argument one can check that $\cS_{\pm}^{\e}$ is path-wise connected, as is every slice of fixed energy of this set 
(all relative to~$\HH^{\e}_{1}$). See~\cite{NakS} (7.15)--(7.19) for details. Therefore, $\cS_{+}^{\e}$ is its own connected component. 
To find a curve connecting $0$ to $\I$ in $H^{1}$ within
that set, follow  a solution in $\B_{-}^{\e}\cap \cS_{+}^{\e}$ to blow-up time. This concludes the proof of Theorem~\ref{thm:main}.

\section{Construction of the center-stable manifold in the energy topology}
\label{sec:mf}

We construct a center-stable manifold containing the ground state $Q$.  
All function spaces will be radial. Moreover, $B_\delta(Q)$ denotes a $\delta$-ball in the energy space centered at~$Q$.  In contrast to Section~\ref{sec:ground}, 
we work with the
matrix formalism usually employed in asymptotic stability theory; the latter is of course closely related
to the formalism of Section~\ref{sec:ground} but since we build upon~\cite{S}, \cite{ES}, \cite{Bec1}, \cite{Bec2} it is convenient to
adopt the complex-linear point of view. In what follows, 
\begin{equation}\label{eq:H0}
 \calH(\alpha,\ga) = \left[ \begin{matrix} 
 -\Delta + \alpha^2 - 2Q^2(\cdot,\alpha) & -  e^{2i\gamma}  Q^2(\cdot,\alpha) \\
   e^{-2i\gamma}  Q^2(\cdot,\alpha) &  \Delta - \alpha^2 + 2Q^2(\cdot,\alpha)
 \end{matrix} \right]
\end{equation}
The following proposition constructs the center-stable manifold in a small neighborhood of~$Q$. It should be
compared to Definition~\ref{def:scattoQ}; in fact, it provides much more detailed information than
what is required by that definition. In Remark~\ref{rem:extendM} we extend  $\M$ so as to cover all of~$\cS$, and 
Corollary~\ref{cor:paraM} characterizes the stable manifold, which lies in~$\M$. A word about notation: henceforth, $\ga$ plays
the role of a phase which has nothing to do with its previous usage, cf.~\eqref{eq:BSsol}. 

For results on asymptotic stability analysis in the subcritical, and thus orbitally stable case, see Buslaev, Perelman~\cite{BP1}, \cite{BP2}, Cuccagna~\cite{Cuc}, 
and Soffer, Weinstein~\cite{SofWei1}, \cite{SofWei2}. See also Pillet, Wayne~\cite{PW}. 

\begin{prop}\label{prop:mf}
There exists $\delta>0$ small and a smooth manifold $\M\subset H^{1}_{\mathrm{rad}}$ with the following properties: $\cS\cap B_\delta(Q)\subset \M\subset B_{\delta}(Q)$, 
$\M$ divides $B_{\delta}(Q)$ into two connected components, and 
any initial data $u_0\in\M$ generates a solution of~\eqref{eq:Schr3} for all $t\ge0$ of the form
\begin{equation}
\label{eq:psiv}
u(x,t) = e^{i\theta(t)} Q(x,\alpha(t)) + v(x,t)\quad \forall \; t\ge0
\end{equation}
where $\theta(t)=-\int_0^t \alpha^2(s)\, ds + \gamma(t)$, 
\EQ{\label{eq:dotpath}
\| \dot\gamma\|_{L^1\cap L^\infty(0,\I)} + \| \dot \alpha\|_{L^1\cap L^\infty(0,\I)} &\lec\delta^2,\\
\sup_{t\ge0} [|\alpha(t)-1|+|\gamma(t)|]\lec \delta 
}
The function $v$ is small in the sense
\begin{equation}
\| v\|_{L^\infty_t ((0,\I);H^1(\R^3))} + \|v\|_{L^2_t((0,\I); W^{1,6}(\R^3))} \lec \delta
\label{eq:v_disp}
\end{equation}
and it scatters:  $v(t)=e^{-it\Delta} v_{\I}+o_{H^{1}}(1)$ as $t\to\I$ for a unique $v_{\I}\in H^{1}$. 

$\M$ is unique in the following sense: there exists a constant $C$ so that any $u_0\in B_\delta(Q)$ satisfies $u_{0}\in\M$ if and
only if the solution $u(t)$ of~\eqref{eq:Schr3} with data $u_{0}$ has the property that 
$\dist(u(t),\cS_{1})\le C\delta$
 for all $t\ge0$. 
\end{prop}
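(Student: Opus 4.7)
The plan is to carry out a Lyapunov--Perron fixed-point construction in the spirit of \cite{S,Bec1,Bec2}, exploiting the spectral decomposition of $\calH(\al,\ga)$ from Proposition~\ref{prop:spectral} and the dispersive estimates allowing small time-dependent perturbations (Section~\ref{sec:AppendixB}). First I would parametrize a candidate solution in the form \eqref{eq:psiv} and impose symplectic orthogonality conditions on $v$ with respect to the root vectors $iQ(\cdot,\al),\p_\al Q(\cdot,\al)$ of $\calH(\al,\ga)$; as in Section~\ref{sec:mod}, these determine $\al(t)$ and $\ga(t)$ smoothly as long as $u(t)$ remains close to $\cS$. Differentiating these constraints yields the modulation ODEs for $(\dot\al,\dot\ga)$, which depend quadratically on $v$ and give the $L^1\cap L^\I$ control \eqref{eq:dotpath} once $v$ has been estimated.

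Next, I would decompose $v$ along the spectrum of the \emph{fixed} operator $\calH(1,0)$, writing $v=\la_+\GZ_++\la_-\GZ_-+\ga_d$ where $\ga_d=P_c v$ lies in the continuous spectral subspace. The resulting system reads schematically
\EQ{
\dot\la_+=\mu\la_++\F_+, \pq \dot\la_-=-\mu\la_-+\F_-, \pq i\p_t\ga_d=\calH(1,0)\ga_d+\F_c,
}
where $\F_\pm,\F_c$ collect the cubic nonlinearity, the modulation terms, and the difference $[\calH(\al(t),\ga(t))-\calH(1,0)]v$ (the latter being small because $|\al-1|+|\ga|\lec\delta$). To produce a bounded forward solution the unstable mode must satisfy the Lyapunov--Perron identity
\EQ{\label{eq:LP}
\la_+(0)=-\int_0^\I e^{-\mu s}\F_+(s)\,ds,
}
which is the defining condition for the center-stable manifold; the free data are then $(\la_-(0),\ga_d(0),\al(0),\ga(0))$, giving a codimension-one object.

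The heart of the argument is then a contraction mapping in the space
\EQ{
X:=\{\,v\in L^\I_t H^1_x\cap L^2_t W^{1,6}_x\,\}\times\{(\al,\ga)\in W^{1,1}\cap W^{1,\I}\}
}
(with Strichartz norms on the $(0,\I)$ interval and the constraint \eqref{eq:LP}). One represents $\la_-,\ga_d$ by Duhamel from $t=0$, $\la_+$ by Duhamel from $t=\I$ per \eqref{eq:LP}, and $(\al,\ga)$ by integrating their ODEs. The key linear input is Beceanu's dispersive/Strichartz estimate for $e^{-it\calH(1,0)}P_c$, together with its stability under the small, purely time-dependent perturbation $\calH(\al(t),\ga(t))-\calH(1,0)$, as rederived in Section~\ref{sec:AppendixB}. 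The cubic nonlinearity is handled by the standard trilinear Strichartz estimates in $H^1_{\mathrm{rad}}(\R^3)$; smoothness of the fixed point in the parameters then yields the smooth manifold $\M$ of codimension one, and scattering of $v$ follows from finiteness of its Strichartz norm by the usual argument. Inclusion $\cS\cap B_\delta(Q)\subset\M$ is immediate because each $e^{-i\al^2 t+i\te}Q(\cdot,\al)$ is exactly of the form \eqref{eq:psiv} with $v\equiv 0$, and $\M$ separates $B_\delta(Q)$ into two components by virtue of being a smooth codimension-one graph over the parameters orthogonal to $\GZ_+$.

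For uniqueness, suppose $u_0\in B_\delta(Q)$ produces a solution with $\dist(u(t),\cS_1)\le C\delta$ for all $t\ge 0$. Applying the modulation/decomposition above, one obtains $\la_\pm(t),\ga_d(t)$ defined globally and uniformly bounded. Integrating the $\la_+$ equation backwards from arbitrarily large $T$ gives $\la_+(0)=e^{-\mu T}\la_+(T)-\int_0^T e^{-\mu s}\F_+(s)\,ds$; letting $T\to\I$ forces \eqref{eq:LP}, hence $u_0\in\M$. Conversely, if \eqref{eq:LP} fails at $u_0$, the ejection mechanism of Lemma~\ref{lem:eject} drives $d_Q(u(t))$ to grow exponentially, eventually past any fixed threshold $\lec\delta$, contradicting the trapping hypothesis. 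The main technical obstacle is the control of the nonlinearity and of the modulation error in the Strichartz framework uniformly in time, which is exactly what Beceanu's perturbative dispersive estimate is designed to provide; without it, the time dependence of $\calH(\al(t),\ga(t))$ would break the usual constant-coefficient Strichartz calculus.
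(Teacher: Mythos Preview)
Your overall strategy matches the paper's: Lyapunov--Perron with modulation, spectral decomposition relative to a fixed matrix Hamiltonian, Beceanu's Strichartz estimate for the residual scalar perturbation $a(t)\sigma_3$, and the same uniqueness argument via boundedness of $\la_+$. The stability (bootstrap) estimates you sketch do close in the strong norm $X$, just as in the paper.

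The gap is in the contraction step. You assert a contraction in
\[
X=(L^\infty_tH^1_x\cap L^2_tW^{1,6}_x)\times(W^{1,1}\cap W^{1,\infty}),
\]
but this does not close as stated. Comparing two iterates with input paths $\pi_0^{(0)},\pi_1^{(0)}$ (same initial point), the difference equation for $W_0-W_1$ acquires a forcing term
\[
\bigl[(\al_0^{(0)})^2(t)-(\al_1^{(0)})^2(t)\bigr]\sigma_3\,W_1.
\]
This is non-localized in $x$ and only $L^\infty$ in $t$ (of size $\|\dot\pi_0^{(0)}-\dot\pi_1^{(0)}\|_{L^1}$); it lies in no dual Strichartz space and cannot be absorbed into Beceanu's $a(t)\sigma_3$ slot since it multiplies $W_1$, not $W_0-W_1$. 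Equivalently, at the $v$-level the phases $-\int_0^t(\al_j^{(0)})^2$ diverge linearly. The paper confronts exactly this: the bootstrap is done in your strong norm, but the contraction is carried out in the weaker weighted norm
\[
\|(R,\pi)\|_Y=\|e^{-t\rho}R\|_{L^\infty_tL^2_x}+\|e^{-t\rho}\dot\pi\|_{L^1},\qquad 0<\rho\ll\mu,
\]
which absorbs the merely bounded path difference via Gronwall (and is still strong enough because the $\la_+$-integral from $t=\infty$ carries $e^{-\mu(s-t)}$). Without this refinement or an equivalent device, your map is not shown to contract.

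A smaller related point: decomposing along $\calH(1,0)$ gives $a(t)=\al^2(t)-1=O(\delta)$, whereas the paper uses $\calH(\al(0),\ga(0))$ so that $a(t)=\al^2(t)-\al^2(0)=O(\delta^2)$ via $\|\dot\al\|_{L^1}\lec\delta^2$. This is harmless for the bootstrap (Beceanu needs only $\|a\|_\infty$ small), but it is consistent with the difficulty above.
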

\begin{proof}
Inserting \eqref{eq:psiv} into~\eqref{eq:Schr3} yields
\begin{equation}
\label{eq:Rvsystem}
i\partial_t \Rv + \wt\calH(t) \Rv = \dot{\gamma}(t) \wt\xi(t) -i\dot{\alpha}(t) \wt \eta(t) + \wt N(t,v,\bar v)
\end{equation}
where 
\begin{equation}\label{eq:calH}
\wt \calH(t) = \left[ \begin{matrix} 
-\Delta - 2Q^2(\cdot,\alpha(t)) & -e^{2i\theta(t)} Q^2(\cdot,\alpha(t)) \\
 e^{-2i\theta(t)} Q^2(\cdot,\alpha(t)) &  \Delta + 2Q^2(\cdot,\alpha(t))
\end{matrix} \right]
\end{equation}
as well as 
\begin{equation}\label{eq:wtxiwteta}
\wt \xi(t) = \binom{e^{i\theta(t)}Q(\cdot,\alpha(t))}{-e^{-i\theta(t)}Q(\cdot,\alpha(t))},\qquad \wt \eta(t) = 
 \binom{e^{i\theta(t)}\partial_\alpha Q(\cdot,\alpha(t))}{e^{-i\theta(t)}\partial_\alpha Q(\cdot,\alpha(t))}
\end{equation}
and 
\begin{equation}\label{eq:nonlinearity}
\wt N(t,v,\bar v)= \binom{2e^{i\theta(t)}Q(\cdot,\alpha(t))|v|^2 + e^{-i\theta(t)}Q(\cdot,\alpha(t))v^2 + |v|^2 v}{-2e^{-i\theta(t)}Q(\cdot,\alpha(t))|v|^2 - e^{-i\theta(t)}Q(\cdot,\alpha(t))\bar{v}^2 - |v|^2 \bar{v}}
\end{equation}
Next, set  
\EQ{\label{eq:vwswitch}
v(t)=e^{i\theta_0(t)} w(t),\qquad \theta_0(t)=-\int_0^t \alpha^2(s)\, ds.
}
  Then \eqref{eq:Rvsystem} turns into
\begin{equation}
\label{eq:Rvsystem2}
i\partial_t W + \calH_{\pi}(t) W = \dot{\gamma}(t)\xi(t) -i\dot{\alpha}(t)\eta(t) + N_{\pi}(t,W)
\end{equation}
where $W=\Rw$, and with $\pi=(\al,\ga)$, 
\begin{equation}\label{eq:Ht}
 \calH_{\pi}(t) = \left[ \begin{matrix} 
-\Delta +\alpha^2(t)  - 2Q^2(\cdot,\alpha(t)) & -  e^{2i\gamma(t)} Q^2(\cdot,\alpha(t)) \\
   e^{-2i\gamma(t)} Q^2(\cdot,\alpha(t)) &  \Delta - \alpha^2(t)  + 2Q^2(\cdot,\alpha(t))
\end{matrix} \right]
\end{equation}
as well as 
\begin{equation}\label{eq:xieta}
\xi(t) = \binom{ e^{i\gamma(t)}Q(\cdot,\alpha(t))}{- e^{-i\gamma(t)}Q(\cdot,\alpha(t))},\qquad \eta(t) = 
 \binom{ e^{i\gamma(t)}\partial_\alpha Q(\cdot,\alpha(t))}{ e^{-i\gamma(t)}\partial_\alpha Q(\cdot,\alpha(t))}
\end{equation}
and 
\begin{equation}\label{eq:NtW}
 N_{\pi}(t,W)= \binom{2 e^{i\gamma(t)} Q(\cdot,\alpha(t))|w|^2 +  e^{-i\gamma(t)} Q(\cdot,\alpha(t))w^2 + 
|w|^2 w}{-2  e^{-i\gamma(t)} Q(\cdot,\alpha(t))|w|^2 -  e^{i\gamma(t)} Q(\cdot,\alpha(t))\bar{w}^2 - |w|^2 \bar{w}}
\end{equation}
At this point we remark that all manipulations which we perform in this proof on \eqref{eq:Rvsystem} and~\eqref{eq:Rvsystem2}
preserve the ``admissible'' subspace $\{\binom{f}{\bar f}\mid f:\R^{3}\to \C\}$. This is necessary in order to return to the scalar formulation~\eqref{eq:psiv}.
In other words, the second row of these systems can be viewed as redundant, as it is always the complex conjugate of the first. 

\noindent Let $\sigma_3=\left[ \begin{matrix}
1&0\\0&-1
\end{matrix}\right]$ be the third Pauli matrix, and set $\xi^*(t)=\sigma_3\xi(t)$, $\eta^*(t)=\sigma_3 \eta(t)$. 
Impose the orthogonality conditions\footnote{Henceforth, we write $\langle\cdot,\cdot\rangle$ for the standard inner product in $L^{2}(\R^{3};\C^{2})$, whereas
$\LR{\cdot|\cdot}$ is the inner product from Section~\ref{sec:ground}.}
\begin{equation}\label{eq:orthcond}
 \lan W(t),\xi^*(t)\ran = 0,\qquad \lan W(t),\eta^*(t) \ran =0\qquad \forall t\ge0
\end{equation}
Note  that this imposes a condition on the data at $t=0$. However, by the inverse function theorem there is a unique choice of 
$\alpha(0)$ and $\gamma(0)$ in a $\delta$-neighborhood of $(1,0)$ so that~\eqref{eq:orthcond} is satisfied; the needed nondegeneracy here is provided 
by  $\langle Q|\partial_\alpha Q\rangle\ne0$. 
Since $\calH_{\pi}(t)^* \xi^*(t)=0$ and $\calH_{\pi}(t)^*\eta^*(t)=-2\al \xi^{*}(t)$,  as well as $\lan \xi(t),\xi^*(t)\ran = \lan \eta(t),\eta^*(t)\ran =0$,
and $\lan \xi(t),\eta^*(t)\ran = \lan \eta(t),\xi^*(t)\ran = 2 \lan Q|\partial_\alpha Q\ran \ne0$, one obtains from~\eqref{eq:Rvsystem2}
that 
\begin{equation}\label{eq:modeq}
\begin{aligned}
\dot{\gamma}(t) \lan \xi(t),\eta^*(t)\ran &= -i \lan W(t), \dot{\eta}^*(t) \ran  -\lan N_{\pi}(t,W),\eta^*(t)\ran   \\
-i\dot{\alpha}(t)  \lan \eta(t),\xi^*(t)\ran &= -i \lan W(t), \dot{\xi}^*(t) \ran  -\lan N_{\pi}(t,W),\xi^*(t)\ran 
\end{aligned}
\end{equation}
The system \eqref{eq:Rvsystem2}, \eqref{eq:orthcond}, \eqref{eq:modeq} determines the evolution of $v(t),\alpha(t),\gamma(t)$ in~\eqref{eq:psiv}. 
In fact, it suffices for \eqref{eq:orthcond}  to hold at one point, say $t=0$ since it then holds for all $t\ge0$. 
More precisely,  one needs to find a fixed point to this system consisting of a path $\pi(t)=(\al(t),\ga(t))$ as well as a function $\Rv$, or equivalently,  $W$ 
satisfying the system as well as the bounds~\eqref{eq:dotpath}, \eqref{eq:v_disp}.   

We begin with the stability part of the underlying contraction argument, i.e., we 
turn \eqref{eq:dotpath} and~\eqref{eq:v_disp} into bootstrap assumptions and then  recover them from this system. 
Thus, suppose $\pi_{0}=(\al_{0},\ga_{0})$ and $W_{0}$ are given so that \eqref{eq:dotpath} and~\eqref{eq:v_disp} hold and consider the following 
system of differential equations:
\EQ{\label{eq:driver}
i\partial_t W + \calH_{\pi_0}(t) W &= \dot{\gamma}(t)\xi_{0}(t) -i\dot{\alpha}(t)\eta_{0}(t) + N_{\pi_0}(t,W_{0}) \\
\dot{\gamma}(t) \lan \xi_{0}(t),\eta_{0}^*(t)\ran &= -i \lan W(t), \dot{\eta}_{0}^*(t) \ran  -\lan N_{\pi_0}(t,W_{0}),\eta_{0}^*(t)\ran   \\
-i\dot{\alpha}(t)  \lan \eta_{0}(t),\xi_{0}^*(t)\ran &= -i \lan W(t), \dot{\xi}_{0}^*(t) \ran  -\lan N_{\pi_0}(t,W_{0}),\xi_{0}^*(t)\ran \\
 \lan W(0),\xi_0^*(0)\ran &= 0,\quad \lan W(0),\eta_0^*(0) \ran =0
}
 where $\calH_{\pi_0}$, $\xi_{0},\eta_{0}$ and $N_{\pi_0}(t,W_{0})$ are defined as above but relative to  the given functions $\pi_{0}, W_{0}$. 
 The initial conditions are $\al(0)=\al_{0}(0)$, $\ga(0)=\ga_{0}(0)$; in addition to the final equation in~\eqref{eq:driver}, $W(0)$ needs to satisfy 
 a further codimension-$1$ condition which will be specified below. 
 
We begin with the $\dot\al, \dot\ga$ part of~\eqref{eq:driver}. The $W$ appearing on the right-hand side will be seen later to satisfy~\eqref{eq:v_disp}; for the moment, we will simply assume
this bound. 
To be more specific, rewrite~\eqref{eq:dotpath}
and~\eqref{eq:v_disp} in the form
\begin{equation}\label{eq:C0C1}\begin{aligned}
\| \dot\gamma\|_{L^1\cap L^\infty} + \| \dot \alpha\|_{L^1\cap L^\infty} &\le C_0\, \delta^2 \\
\| v\|_{L^\infty_t H^1(\R^3)} + \|v\|_{L^2_t W^{1,6}(\R^3)} & \le C_1\,  \delta 
\end{aligned}
\end{equation}
and assume that $C_0\gg C_1^2$. Inserting these bounds in the right-hand side of~\eqref{eq:modeq} yields
\begin{align*}
 \|\dot{\gamma}\|_{L^1\cap L^\infty} + \|\dot{\alpha}\|_{L^1\cap L^\infty} & \lec C_0 C_1\, \delta^3 + C_1^2\, \delta^2  \ll C_0\,\delta^2
 \end{align*}
 provided $\delta$ is small. 
One can thus recover~\eqref{eq:C0C1}. 
The bound on~$v$ (or $W$) is more delicate. 
Since we are in the unstable regime, \eqref{eq:Rvsystem2} is exponentially unstable. More precisely, write
\[
\calH_{\pi_{0}}(t) = \calH_{0} + a_{0}(t) \sigma_3 + D_{0}(t)
\]
with the constant coefficient operator  $\calH_{0}=\calH(\alpha_{0}(0),\ga_{0}(0))$, see~\eqref{eq:H0}, 
and $a_{0}(t)=\alpha_{0}^2(t)-\alpha_{0}^2(0) $, as well as $D_{0}(t)$ equaling 
\[
 \left[ \begin{matrix} 
 - 2(Q^2(\cdot,\alpha_{0}(t)) -Q^2(\cdot,\alpha_{0}(0))) \!\!\!\! &\!\!\!\! -  e^{2i\gamma_{0}(t)}Q^2(\cdot,\alpha_{0}(t)) +  e^{2i\gamma_{0}(0)} Q^2(\cdot,\alpha_{0}(0)) \\
  e^{-2i\gamma_{0}(t)}  Q^2(\cdot,\alpha_{0}(t)) -   e^{-2i\gamma_{0}(0)} Q^2(\cdot,\alpha_{0}(0))\!\!\!\!  & \!\!\!\!   2(Q^2(\cdot,\alpha_{0}(t))-Q^2(\cdot,\alpha_{0}(0)))
 \end{matrix} \right]
\]
Note that $\|a_{0}(\cdot)\|_\infty\lec \delta^2$ and  $\| \lan x\ran^N D_{0}(\cdot)\|_\infty \lec \delta^2$
for any~$N$ provided the condition~\eqref{eq:dotpath} holds.  

\noindent Proposition~\ref{prop:spectral} in Section~\ref{sec:AppendixB} details the spectral properties of~$\calH_{0}$ for the case $\alpha(0)=1$ and $\gamma(0)=0$.
The more general case here follows   by means of the rescaling $f\mapsto \alpha f(\alpha x)$, as well as a modulation by a constant unitary matrix. 
Following the notation of Proposition~\ref{prop:spectral} one writes
\EQ{\label{eq:WGpm}
W(t) = \lambda_+(t) G_+ + \lambda_-(t) G_- + W_1(t)
}
where $\lan W_1(t), \sigma_3 G_\pm\rangle=0$ for all $t\ge0$. 
One needs to apply the aforementioned rescaling and modulation
to $G_{\pm}$ and $\mu$ with the fixed parameters $\al_{0}(0),\ga_{0}(0)$, which means that $\mu=\mu(\al_{0}(0))$, $G_{\pm}=G_{\pm}(\al_{0}(0),\ga_{0}(0))$. 
We remark that $\la_{\pm}$ as defined in~\eqref{eq:WGpm} are real-valued. Indeed, since $\| G_{\pm}\|_{2}=1$ and $G_{+} = \binom{g_{+}}{\overline{g_{+}}}=\overline{G_{-}}$, 
the Riesz projections associated with the eigenvalues $\pm i\mu$ can be seen to be
\EQ{\label{eq:Riesz}
P_{\pm} = \frac{\LR{\cdot, \sigma_{3} G_{\mp}} }{ \LR{G_{\pm}, \sigma_{3} G_{\mp}} }  G_{\pm},}
where $\LR{G_{\pm}, \sigma_{3} G_{\mp}} \in i\R\setminus\{0\}$. 
 Therefore, 
\EQ{
\lambda_{+} = \frac{\LR{ W,  \sigma_{3} G_{-}}}{ \LR{G_{\pm}, \sigma_{3} G_{\mp}} } =   \frac{ 2i \LR{ w | ig_{-}}  }{ \LR{G_{\pm}, \sigma_{3} G_{\mp}}  }  \in \R 
}
and similarly for $\la_{-}$. 

\noindent We now rewrite the $W$-equation in~\eqref{eq:driver} in the form
\EQ{\label{eq:systemparts}
&i\dot\lambda_+(t) G_+ + i\dot\lambda_-(t) G_-    - i\mu \lambda_+(t) G_+ + 
i\mu  \lambda_-(t) G_-  \\
& \qquad + i\partial_t W_1(t)  + (\calH_{0}+a_{0}(t)\sigma_3)W_1 \\
&= -D_{0}(t) W  - a_{0}(t) \sigma_3 G_+ - a_{0}(t) \sigma_3 G_- \\
&\qquad  + \dot{\gamma}(t)\xi_{0}(t) -i\dot{\alpha}(t)\eta_{0}(t) + N_{\pi_{0}}(t,W_{0}) =:F(t)
}
Denote by $P_{\pm}$, $P_0$ the Riesz projections onto $G_\pm$, and the zero root space, respectively.
Note that these operators are given by integration against exponentially decaying   tensor functions. 
 Moreover,
we write 
\EQ{\label{eq:PcP0}
 P_c = 1-P_+ -P_- - P_0 = \Preal - P_0
 }
  for the projection onto the continuous spectrum. 
Applying the projections $P_{\pm}$ to~\eqref{eq:systemparts} yields the system of ODEs
\begin{equation}\begin{aligned}
(\dot\lambda_+(t) - \mu \lambda_+(t))G_+ &= ia_{0}(t) P_+(\sigma_3 W_1) -iP_+ F(t) \\
(\dot\lambda_-(t) + \mu \lambda_-(t))G_- &= ia_{0}(t) P_-(\sigma_3 W_1) -i P_- F(t)
\end{aligned}
\label{eq:hyp_ode}
\end{equation}
For ``generic'' initial data $\lambda_{+}(0)$ the solution $\lambda_{+}(t)$ grows exponentially. However, 
there  is  a unique choice of initial condition 
that stabilizes~$\lambda_+$ (i.e., ensures that it remains bounded) leading to the
determination of the codimension one manifold. It is given by means of the following simple principle: suppose $\dot x(t) - \mu x(t) =f(t)$ with $f\in L^{\I}((0,\I))$. Then
$x\in L^{\I}((0,\I))$ iff
\EQ{
\label{eq:stable}
0 = x(0)+\int_0^\infty e^{-\mu t} f(t)\, dt.
}
Thus,  
\EQ{\label{eq:stablam+}
0= \lambda_+(0)G_+ + i\int_0^\infty  e^{-t\mu} [a_{0}(t) P_+(\sigma_3 W_1)(t) -P_+ F(t)]\, dt
}
is that unique choice. 
\eqref{eq:stablam+} has the following equivalent formulation 
\EQ{ \label{eq:la+int}
\la_{+}(t) G_{+}= - i\int_{t}^{\I} e^{-(s-t)\mu} [a_{0}(t) P_+(\sigma_3 W_1)(t) -P_+ F(t)]\, ds
}
For $\la_{-}(t)$ we have the expression
\EQ{\label{eq:la-int}
\la_{-}(t) G_{-} = e^{-\mu t}\la_{-}(0)  G_{-}+ i\int_{0}^{t} e^{-(t-s)\mu} [a_{0}(t) P_-(\sigma_3 W_1)(t) -P_- F(t)]\, ds
}
Via \eqref{eq:Riesz} one checks that $\la_{\pm}$ as defined by these equations are real-valued. 
To determine the PDE for $W_1=\Preal W_1=\Preal W$, we write  $W_1=P_c W_1 + P_0 W_1=\Wdisp + \Wroot$. Then
\begin{equation}\label{eq:mainPDE}
i\partial_t \Wdisp(t)  + (\calH_{0}+a_{0}(t)\sigma_3)\Wdisp = P_c F(t) - a(t) [\sigma_3,P_++P_-+P_0]W_1
\end{equation}
The sought after solution  
\EQ{\label{eq:sought}
(\al(t),\ga(t),\la_{+}(t),\la_{-}(t),\Wroot(t),\Wdisp(t))
} is now determined from the second and third equations of~\eqref{eq:driver}, 
from~\eqref{eq:la+int}, \eqref{eq:la-int}, and~\eqref {eq:mainPDE}. The root part is controlled by the orthogonality conditions
 \EQ{
 \LR{ W,\xi_{0}^{*}}(t) = \LR{ W, \eta_{0}^{*}}(t)=0 \quad \forall\; t\ge0
 }
 The main technical ingredient for the dispersive control of \eqref{eq:mainPDE} is the Strichartz estimate of Lemma~\ref{lem:Beclem}, see Section~\ref{sec:AppendixB}. 
 The existence of the solution~\eqref{eq:sought} is not entirely trivial since the determining equations contain these functions
 linearly on the right-hand side. However, they occur with small coefficients which allows one to iterate or contract; we skip those details. 
 The solution
obeys the estimates~\eqref{eq:dotpath}, \eqref{eq:v_disp}. 
While \eqref{eq:dotpath} has already been established in this fashion, \eqref{eq:v_disp} is obtained as follows. Assuming again~\eqref{eq:C0C1}, one concludes 
from \eqref{eq:la+int} and~\eqref{eq:la-int} that 
\EQ{
\|\lambda_{\pm}\|_{L^{\I}\cap L^{2}}\lec \delta +  (C_{0}\,C_{1}\delta+C_{0}+C_{1}^{2})\delta^{2} \ll C_{1}\,\delta 
}
provided $\delta$ is sufficiently small. 
Via Lemma~\ref{lem:Beclem} we conclude that 
$
\|\Wdisp\|_{S}\ll C_{1}\delta
$
where $S$ is the Strichartz space in~\eqref{eq:v_disp}. Finally, now that 
 the path $(\alpha(t),\gamma(t))$ has been determined, as well as  $\lambda_{\pm}(t)$, $\Wdisp(t)$, the orthogonality 
 conditions~\eqref{eq:orthcond} determine~$\Wroot$ which also satisfies $\|\Wroot\|_{S}\ll C_{1}\delta$. 
From these estimates, we conclude~\eqref{eq:v_disp} via bootstrap as claimed. 

The manifold is determined by~\eqref{eq:stablam+}
 as a graph, {\em once a fixed point} $(\pi,W)=(\pi_{0},W_{0})$ is obtained. 
 More precisely, for {\em fixed} $(\al(0),\ga(0))$  we prescribe initial conditions $W^{(0)}\in H^{1}$, $\|W^{(0)}\|_{H^{1}}\lec \delta$  
 for~\eqref{eq:driver} such that $P_{0}W^{(0)}=0$ as well as $P_{+}W^{(0)}=0$
 where the projections are relative to $\calH(\al(0),\ga(0))$.  Such data are linearly stable. 
 The condition~\eqref{eq:stablam+} takes nonlinear corrections into account
 and modifies the data in the form
 \EQ{
 \label{eq:moddata}
W(0)= W^{(0)} + h(\pi_{0},W_{0}, W^{(0)})G_{+}
}
where $h(\pi_{0},W_{0}, W^{(0)})=\lambda_{+}(0)$ is real-valued and satisfies $|h(\pi_{0},W_{0}, W^{(0)})|\lec\delta^{2}$.  
Since $\pi(0)=\pi_{0}(0)$ by construction, once we have found a fixed point,  we can write $h(\pi_{0},W_{0}, W^{(0)})=h(\al_{0}(0),\ga_{0}(0), W^{(0)})$ where the
latter is smooth in $W^{(0)}$ in the sense of Fr\'echet derivatives. Moreover, the bound
\EQ{\label{eq:hbd}
|h(\al_{0}(0),\ga_{0}(0), W^{(0)})| \lec \|W^{(0)}\|_{H^{1}}^{2}
}
will hold. This shows that \eqref{eq:moddata} describes a codimension-three manifold which is smoothly parametrized by $W^{(0)}$ and tangent
to the subspace of linear stability.  To regain the two missing codimensions, we vary $(\al_{0}(0),\ga_{0}(0))$ in a $\delta$-neighborhood of $(1,0)$. In other words, we 
let the dilation and modulation symmetries act on the codimension-three manifold. Since these symmetries act transversely on the manifold (for the
same reason that allowed us  to enforce \eqref{eq:orthcond} at $t=0$ by modifying the data), we obtain
a smooth codimension-one manifold which will be parametrized by $$(\al_{0}(0),\ga_{0}(0), W^{(0)})\in (1-\delta,1+\delta)\times (-\delta,\delta)\times B_{\delta}$$
where $B_{\delta}$ is a $\delta$-ball in $H^{1}$. This is then the sought after~$\M$. 

Thus, one needs to find a fixed point for the system \eqref{eq:driver} via
a contraction argument. The contraction argument is slightly delicate as it involves solving this system with two different but nearby {\em given}
paths $\pi_{j}^{0}$, $j=0,1$ which therefore define different Hamiltonians via~\eqref{eq:Ht}, and therefore also
different orthogonality conditions~\eqref{eq:orthcond}. Note that phases of the form $t\al^{2}$ and $t\tilde\al^{2}$ diverge linearly if $\al^{2}\ne \tilde\al^{2}$.
This make it necessary to employ a weaker norm than the one used in the previous stability argument, see~\eqref{eq:dotpath}, \eqref{eq:v_disp}. 

In order to carry out the comparison between two solutions, we work on the level of~\eqref{eq:Rvsystem} rather than with the aforementioned $W$-system.
Thus, consider two paths $\pi_{j}^{0}(t)=(\al^{0}_{j}(t),\ga_{j}^{0}(t))$ satisfying~\eqref{eq:dotpath} and with $\pi_{0}^{(0)}(0)=\pi_{1}^{(0)}(0)$,
and the associated equations with $Z=\Rv$
\EQ{\label{eq:Zsys01}
i\partial_t Z + \wt\calH_{j}(t)Z = \dot{\gamma}_{j}(t) \wt\xi_{j}(t) -i\dot{\alpha}_{j}(t) \wt \eta_{j}(t) + \wt N_{j}(t,v_{j}^{0},\bar v_{j}^{0})
}
for $j=0,1$, see~\eqref{eq:Rvsystem}. Here $\wt\calH_{j}$, $\wt\xi_{j}$, $\wt\eta_{j}$ and $\wt N_{j}(t,v,\bar v)$ are defined as
in \eqref{eq:calH}, \eqref{eq:wtxiwteta}, \eqref{eq:nonlinearity} but relative to the paths $\pi_{j}^{0}(t)$.   Moreover, the function $v_{j}^{0}$ 
are given and satisfy~\eqref{eq:v_disp}, 
and we impose the orthogonality conditions, see \eqref{eq:orthcond}, 
\EQ{\label{eq:Zorthcond}
\LR{Z(t),\sigma_{3}\wt\xi_{j}(t)} = \LR{Z(t),\sigma_{3}\wt\eta_{j}(t)}=0\qquad \forall\; t\ge0
}
The initial conditions for the paths are $\pi_{0}(0)=\pi_{1}(0)=\pi_{0}^{(0)}(0)$, whereas for $Z_{0}, Z_{1}$ one invokes~\eqref{eq:moddata} as follows:
fix $Z_{0}^{(0)}\in B_{\delta}(0)$ so that $P_{0}Z_{0}^{(0)}= P_{+}Z_{0}^{(0)}=0$ and set
\EQ{\label{eq:Z0Z1init}
Z_{0}(0) &= Z^{(0)}_{0} + h(\pi_{0}^{0},W_{0}^{0}, W_{0}^{(0)})G_{+} \\
Z_{1}(0) &= Z^{(0)}_{0} + h(\pi_{1}^{0},W_{1}^{0}, W_{0}^{(0)})G_{+} 
}
This choice guarantees that \eqref{eq:Zorthcond} holds at $t=0$. 
By the preceding stability analysis, \eqref{eq:Zsys01} and~\eqref{eq:Zorthcond} then define unique solutions $(\pi_{j},Z_{j})$ satisfying \eqref{eq:dotpath} and~\eqref{eq:v_disp}. 
Differentiating~\eqref{eq:Zorthcond} in combination with~\eqref{eq:Zsys01} yields the modulation equations~\eqref{eq:modeq}. Thus, we rewrite~\eqref{eq:Zsys01}
in the form
\EQ{\label{eq:Zsys01*}
i\partial_t Z_{j} + \wt\calH_{j}(t) Z_{j} = -iL_{j}(t)Z_{j}+ N_{j}(t,v_{j}^{0},\bar v_{j}^{0})
}
where $N_{j}$ incorporates both~$\wt N_{j}$ and the nonlinear term in~\eqref{eq:modeq}.  The linear term $ L_{j}(t)Z_{j}$  is of finite rank and corank, and
satisfies the estimates 
\EQ{
\| L_{j}(t)Z_{j} \|_{W^{k,p}}  \lec \|Z_{j}\|_{H^{1}} |\dot\pi^0_j(t)|
}
for any $1\le p\le \I$ and $k\ge0$. Combining this pointwise in time bound with~\eqref{eq:dotpath} yields the full estimates on~$L_j(t) Z_j$. 
By construction, any solution of~\eqref{eq:Zsys01*} which satisfies~\eqref{eq:Zorthcond}
at one point, say $t=0$, satisfies~\eqref{eq:orthcond} for all $t\ge0$. 

\noindent 
The difference $R:= Z_{1}-Z_{0}$ satisfies 
\EQ{\label{eq:Req}
i\partial_t R+ \wt\calH_{0}(t) R &=- i L_{1}(t) R  - N_{0}(t,v_{0}^{0},\bar v_{0}^{0}) + N_{1}(t,v_{1}^{0},\bar v_{1}^{0}) \\
&\quad + (\wt\calH_{0}(t)-\wt\calH_{1}(t))Z_{1}  - i(L_{1}(t)-L_{0}(t))Z_{0} =: \wt F
}
whereas the difference of the paths $\pi =\pi_{1} -\pi_{0}$ is governed by taking differences of the third and fourth equations, respectively,  in~\eqref{eq:driver} for $j=1,0$. 
We estimate $(R,\pi)$ in the norm, with $\rho>0$ small, fixed, and to be determined,  
\EQ{\label{eq:Ynorm}
\| (R,\pi) \|_{Y}:=  \| e^{-t\rho} R \|_{L^{\I}_{t}((0,\I);L^{2})} + \| e^{-t\rho} \dot \pi \|_{L^{1}((0,\I))}
}
To render this  a norm, one fixes $\pi(0)=0$, say. Note that some measure of growth has to be built into $\|\cdot\|_{Y}$, since 
$\wt\calH_{0}(t)-\wt\calH_{1}(t)$ and $L_{1}(t)-L_{0}(t)$ grow linearly in~$t$. 
\noindent  Next,  we perform the same modulation as above, i.e., 
$$ W(t):= 
\left[ \begin{matrix} 
 e^{-i\theta_{0}^{0}(t)} & 0 \\ 0 & e^{i\theta_{0}^{0}(t)}
 \end{matrix} \right] R,\qquad   \theta_{0}^{0}(t) = -\int_{0}^{t} (\al_{0}^{0}(s))^{2}\, ds
 $$
 Denoting the matrix here by $M_{0}(t)$, 
 $W$ satisfies the equation 
\EQ{\label{eq:Wdiffeq}
i\partial_t W+ (\wt\calH_{0}(t) + (\al_{0}^{0}(0))^{2}\sigma_{3}) W  =  M_{0} \wt F
}
To  obtain estimates on~\eqref{eq:Wdiffeq}, we write 
\[
\wt\calH_{0}(t) + (\al_{0}^{0}(0))^{2}\sigma_{3} = \calH_{0}^{0} + a(t) \sigma_3 + D(t)
\]
with the constant coefficient operator  $\calH_{0}^{0}=\calH(\al_{0}^{0}(0),\ga_{0}^{0}(0))$, see~\eqref{eq:H0}, 
and $a(t)=(\al_{0}^{0}(t))^2-(\al_{0}^{0}(0))^2 $, as well as  $D(t)$ equaling
\[
\left[ \begin{matrix} 
 - 2(Q^2(\cdot,\alpha_{0}^{0}(t)) -Q^2(\cdot,\alpha_{0}^{0}(0))) \!\!\!\! &
 \!\!\!\! -  e^{2i\gamma_{0}^{0}(t)}Q^2(\cdot,\alpha_{0}^{0}(t)) +  e^{2i\gamma_{0}^{0}(0)} Q^2(\cdot,\alpha_{0}^{0}(0)) \\
  e^{-2i\gamma_{0}^{0}(t)}  Q^2(\cdot,\alpha_{0}^{0}(t)) -   e^{-2i\gamma_{0}^{0}(0)} Q^2(\cdot,\alpha_{0}^{0}(0))\!\!\!\!  & 
  \!\!\!\!   2(Q^2(\cdot,\alpha_{0}^{0}(t))-Q^2(\cdot,\alpha_{0}^{0}(0)))
 \end{matrix} \right]
\]
One has  $\|a(\cdot)\|_\infty\lec \delta^2$ and  $\| \lan x\ran^N D(\cdot)\|_\infty \lec \delta^2$
for any~$N$ as before.   At this point the analysis is similar to the one starting with~\eqref{eq:WGpm}. Indeed, writing once again
$$
W    =     \la_{+}G_{+}    +     \la_{-}G_{-}    +    \Wroot  +   \Wdisp
$$
where the decomposition is carried out relative to~$\calH_{0}^{0}$, one inserts this into~\eqref{eq:Wdiffeq} and proceeds as before.
The two main differences from the previous stability analysis are as follows: (i)  the stability condition~\eqref{eq:stablam+} holds automatically here, since we know apriori that $\la_{+}$ remains bounded;
indeed, we chose $Z_{1}, Z_{0}$ to each satisfy~\eqref{eq:stablam+} whence~\eqref{eq:v_disp} holds for each of these functions.
(ii) the orthogonality condition~\eqref{eq:orthcond} does not hold exactly in this form, since it is obtained by taking the difference of the orthogonality conditions 
satisfied by $Z_{1}$ and $Z_{0}$. But this is minor, since the error one generates in this fashion is contractive.

Applying the dispersive bound of Lemma~\ref{lem:Beclem} (here we need only the $L^2_x$ part)
to~$\Wdisp$ yields via a term-wise estimation of the right-hand side of~\eqref{eq:Req},
\EQ{\label{eq:twoint}
\|R(t)\|_{2} &\lec \delta e^{t\rho} \|(R_{0}^{0}- R_{1}^{0}, \pi_{0}^{0}-\pi_{1}^{0})\|_{Y} + \delta \int_{0}^{t} \|R(s)\|_{2}\, ds\\
&\quad + \delta \int_{t}^{\I} e^{-\mu(s-t)} \|R(s)\|_{2}\, ds
}
where $R_{j}^{0}=\binom{v_{j}^{0}}{\bar{v}_{j}^{0} }$ for $j=0,1$. 
Recall that the initial conditions for $R$ 
are determined by~\eqref{eq:Z0Z1init}. 
The final integral in~\eqref{eq:twoint} is a result of the $\la_{+}$ equation \eqref{eq:la+int}. 
Assuming $\mu(\al_{0}^{0}(0))>\rho \gg \delta$, Gronwall's inequality implies 
\EQ{
\sup_{t\ge0} e^{-t\rho}\|R(t)\|_{2} &\lec \de\rho^{-1}  \| (v_{0}^{0} -v_{1}^{0}, \pi_{0}^{0}- \pi_{1}^{0})\|_{Y} 
}
as desired. Next, one estimates the $\pi$ equation with initial condition $\pi(0)=0$.  The conclusion is a bound of the form
\EQ{
\| (R,\pi)\|_{Y} + | h(\pi_{0}^{0},W_{0}^{0}, W_{0}^{(0)}) -  h(\pi_{1}^{0},W_{1}^{0}, W_{0}^{(0)})  | \ll \| (v_{0}^{0} - v_{1}^{0}, \pi_{0}^{0}-\pi_{1}^{0}) \|_{Y}
}
which proves the desired contractivity. See~\cite{Bec2} for more details on these estimates. 
Hence, one has a fixed point of  \eqref{eq:driver} as well as a well-defined function $h(\pi_{0}^{0}(0),W_{0}^{(0)})$. 
This concludes the proof of the existence part.

Next, we turn to scattering. In contrast to the previous analysis, we do not  linearize 
 around $\calH(\al(0),\ga(0))$, but rather around $\H(\al(\I),\ga(\I))$.  Thus, consider 
  the system \eqref{eq:modeq}, \eqref{eq:orthcond},  \eqref{eq:la+int}, \eqref{eq:la-int}, \eqref{eq:mainPDE} with $a(t)=\al^{2}(t)-\al^{2}(\I)$, 
  $F$ defined by~\eqref{eq:systemparts},
and  $D(t)$ equaling 
\EQ{\label{eq:Ddef}
\left[ \begin{matrix} 
 - 2(Q^2(\cdot,\alpha(t)) -Q^2(\cdot,\alpha(\I))) \!\!\!\! &\!\!\!\! -  e^{2i\gamma(t)}Q^2(\cdot,\alpha(t)) +  e^{2i\gamma(\I)} Q^2(\cdot,\alpha(\I)) \\
  e^{-2i\gamma(t)}  Q^2(\cdot,\alpha(t)) -   e^{-2i\gamma(\I)} Q^2(\cdot,\alpha(\I))\!\!\!\!  & \!\!\!\!   2(Q^2(\cdot,\alpha(t))-Q^2(\cdot,\alpha(\I)))
 \end{matrix} \right]
}
Thus,  $a(t), D(t)\to 0$ as $t\to\I$.  This ensures the vanishing at $t=\I$ of the first 
three terms of $F(t)$ in~\eqref{eq:systemparts}. The fourth and fifth terms of~$F$ vanish in the $L^{1}(T,\I)$-sense
as $T\to\I$ by~\eqref{eq:dotpath}, whereas the nonlinear term $N(t,W)$ vanishes in the sense of Strichartz estimates.  
Therefore, 
\eqref{eq:la+int}, \eqref{eq:la-int} imply that 
$\la_{\pm}(t)\to0$ as $t\to\I$. 
 Hence, in view of the scattering statement in
Lemma~\ref{lem:Beclem} one has the representation in $H^{1}$
\EQ{\label{eq:scat_rep}
\Wdisp(t) &= e^{i \sigma_{3} \int_{0}^{t}(-\Delta +\al^{2}(\I) +  a(s))\, ds}  W_{\I} + o(1) \\
  & =    e^{i \sigma_{3} \int_{0}^{t}(-\Delta +\al^{2}(s))\, ds}  W_{\I} + o(1)
}
as $t\to\I$. The modulation in~\eqref{eq:vwswitch} removes the $\al^{2}(s)$ in the exponent once we return to the $v$ representation. 
Finally, by the orthogonality conditions $\Wroot(t)\to0$. 
In summary, we have obtained the desired scattering statement for $v$ in~\eqref{eq:psiv}. 

Finally, to obtain the uniqueness statement let $u(t)$ be a solution with $u(0)\in B_{\delta}(Q)$ and with the property that 
$\dist(u(t),\cS_{1})\lec \delta$ for all $t\ge0$. 
We claim that there exists a   $C^1$-curve  $(\alpha(t),\theta(t))\in (1-O(\delta),1+O(\delta))\times \R$   which achieves
\begin{equation} \begin{aligned}
 \langle u(t)-e^{i\theta(t)} Q(\cdot,\alpha(t)) | e^{i\theta(t)}Q(\cdot,\alpha(t))\rangle &=0  \\
 \langle u(t)-e^{i\theta(t)} Q(\cdot,\alpha(t)) | i e^{i\theta(t)} \p_{\al} Q(\cdot,\alpha(t))\rangle &=0
 \end{aligned}
 \label{eq:modeq2}
\end{equation}
for all $t\ge 0$, as well as 
\begin{equation}\label{eq:closeuQ}
\sup_{t\ge 0} \| u(t)-e^{i\theta(t)} Q(\cdot,\alpha(t)) \|_{H^1}\lec\delta
\end{equation}
In fact, 
by definition there is a $C^{1}$--path $\tilde\theta(t)\in  \R$  
so that 
\[
\sup_{t\ge0} \| u(t)-e^{i\tilde\theta(t)} Q(\cdot,1) \|_{H^1} \lec \delta
 \]
This shows that one can fulfill~\eqref{eq:modeq2}
up to $O(\delta)$. Next, one uses that $|\langle Q| \partial_\alpha Q\rangle|\simeq 1$ for all $\alpha\simeq1$ 
 and the inverse function theorem to show that $(\tilde\alpha\equiv1,\tilde\theta)$
 can be modified by an amount $O(\delta)$ so as to exactly satisfy~\eqref{eq:modeq2} without violating~\eqref{eq:closeuQ}. Furthermore, by chaining one concludes that this procedure yields a well-defined path $(\alpha,\theta)$
 which is $C^1$, as claimed.  
Next, define \[ \theta_0(t)=-\int_{0}^t \alpha^2(s)\, ds + \theta(0)\]
and set $\gamma=\theta-\theta_0$.    
Now write
\begin{equation}
u(t) = e^{i\theta(t)} Q(\cdot,\alpha(t)) + v(t)= e^{i\theta(t)} Q(\cdot,\alpha(t)) + e^{i\theta_0(t)}w(t)
\label{eq:u_decomp}
\end{equation}
This then allows one to rewrite \eqref{eq:modeq2} in the form
\begin{equation}\label{eq:modeq3}
\langle e^{i\gamma(t)} Q(\cdot,\alpha(t)) | w(t)\rangle=0,\qquad \langle i e^{i\gamma(t)} \p_{\al} Q(\cdot,\alpha(t))| w(t)\rangle =0
\end{equation}
As before, consider $W=\binom{w}{\bar w}$, and  perform the decomposition~\eqref{eq:WGpm}. 
Inserting \eqref{eq:u_decomp} into~\eqref{eq:Schr3} yields, cf.~\eqref{eq:Rvsystem}, 
\begin{equation}\label{eq:Rvsys2}
i\partial_t \Rv + \wt\calH(t) \Rv = (\dot{\theta}(t)+\alpha^2(t)) \wt\xi(t) -i\dot{\alpha}(t) \wt \eta(t) + \wt N(t,v,\bar v)
\end{equation}
where $\wt\calH, \wt\xi,\wt\eta$, and $\wt N$ are as in \eqref{eq:calH}, \eqref{eq:wtxiwteta}, \eqref{eq:nonlinearity}. 
Furthermore, with $W=\binom{w}{\bar w}$,  
\begin{equation}\label{eq:WHt}
i\partial_t W + \calH(t) W = \dot{\gamma}(t)\xi(t) -i\dot{\alpha}(t)\eta(t) + N(t,W)
\end{equation}
see \eqref{eq:Rvsystem2}, \eqref{eq:xieta}, \eqref{eq:NtW}. The orthogonality conditions \eqref{eq:modeq3} are of the form
\begin{equation}\label{eq:orth2}
\langle W(t), \xi^*(t)  \rangle =0,\qquad \langle W(t),  \eta^*(t)  \rangle =0
\end{equation}
which is identical with \eqref{eq:orthcond}. 
This places us in the exact same position that we started from in the existence proof. Thus, the decomposition~\eqref{eq:u_decomp}
is such that \eqref{eq:dotpath} and~\eqref{eq:v_disp} hold. 
The only difference here is that
we know apriori that $\la_{+}(t)$ is bounded. However, \eqref{eq:stable}  guarantees that therefore~\eqref{eq:stablam+}
holds which forces the solution to lie on~$\M$ as desired. 
\end{proof}

\begin{rem}  
\label{rem:extendM}
Denote the manifold constructed in Proposition~\ref{prop:mf} by $\M_{1,0}$. 
The same construction can be applied to $e^{ i\gamma} Q(x,\alpha)$  instead of $Q$ for any $\ga\in \tor=\R/\Z$ and $\alpha>0$, yielding a codimension one manifold
 in the phase space $\HH$ which we 
denote by $\M_{\alpha,\ga}$.  By the uniqueness part of Proposition~\ref{prop:mf} one concludes that 
\EQ{ \label{eq:MSdef}
\M_{\cS}:=\bigcup_{\alpha>0, \ga\in\tor} \M_{\alpha,\ga}
}
is again a smooth manifold, 
which contains all of $\cS$. 
By the proof of Proposition~\ref{prop:mf} it is smoothly 
parametrized by $(\al(0),\ga(0),W(0))$ where $P_{0}(\al(0),\ga(0)) W(0)=0$
and $P_{+}(\al(0),\ga(0)) W(0)=0$ and $W(0)$ needs to be small enough. 

It has the property that any $u_{0}\in\M_{\cS}$ leads to a solution of~\eqref{eq:Schr3} defined on $t\ge0$ which scatters to~$\cS$ 
as $t\to\I$ in the sense of Definition~\ref{def:scattoQ}. 
We emphasize that this is {\em not} the manifold $(5)\cup(7)\cup(9)$ 
appearing in Theorem~\ref{thm:main*}. Rather, that manifold is the maximal backward evolution of $\M_\cS$ under the NLS flow. Note that $\M_{\cS}$, thus extended by
the nonlinear flow, is again a manifold. 
\end{rem}

The following characterization of the stable manifolds will be needed in the proof of Theorem~\ref{thm:threshold}.  It precisely captures
the case where the radiation part (i.e., the difference between $u(t)$ and the soliton in~\eqref{eq:scattoS})  has vanishing scattering data and
is therefore uniquely captured by $\la_{-}(0)$.

\begin{cor}\label{cor:paraM}
Let $\M_{\cS}$ be as in \eqref{eq:MSdef}.  Suppose $u_{0}\in \M_{\cS}$  with $M(u_{0})=M(Q)$ forward scatters to $\cS$ in 
the sense of Definition~\ref{def:scattoQ} so that~\eqref{eq:scattoS} holds with $u_{\I}=0$. Then the solution~$u(t)$
of \eqref{eq:Schr3} with data $u_{0}$ approaches a soliton trajectory in $\cS_{1}$ exponentially fast. Moreover, the solution is uniquely characterized by $\ga_{\I}\in S^{1}$
and a real number $\la_{0}$ with $|\la_0|\lec\delta$.  The case where $u$ is an exact soliton is characterized by $\la_{0}=0$. 
\end{cor}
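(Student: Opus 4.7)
The plan is to combine the fine structure of $\M_\cS$ from Proposition~\ref{prop:mf} with the hypothesis $u_\I = 0$ to upgrade scattering to exponential convergence, and then to realize the resulting solutions as the forward stable manifold of $\cS_1$, a $2$-parameter family, via a Lyapunov--Perron argument.

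First, I extract exponential decay via bootstrap. The mass constraint together with \eqref{eq:MEal} and $u_\I = 0$ force $\al_\I = 1$, so the decomposition \eqref{eq:psiv} gives a path $(\al(t), \ga(t)) \to (1, \ga_\I)$ and a radiation $v(t) \to 0$ in $H^1$. I linearize around the \emph{asymptotic} soliton rather than the current one: set $v = e^{i\theta_0}w$, $W = \binom{w}{\bar w}$, and expand $W = \la_+ G_+ + \la_- G_- + \Wroot + \Wdisp$ with respect to $\calH_\I := \calH(1, \ga_\I)$. The scattering representation \eqref{eq:scat_rep} now reads $W_\I = 0$ in $H^1$, so the Duhamel formula for the dispersive component becomes the \emph{backward} integral
\[
\Wdisp(t) = -i\int_t^\I U_\I(t,s)\, P_c F_{\mathrm{nonlin}}(s)\, ds,
\]
where $U_\I$ is the propagator of $\calH_\I + a(\cdot)\sigma_3$ restricted to the continuous spectrum (uniformly $H^1$-bounded by Lemma~\ref{lem:Beclem}). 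Combined with \eqref{eq:la+int}, \eqref{eq:la-int}, \eqref{eq:modeq}, and the bound $|a(t)| + \|D(t)\|_\infty \lec \int_t^\I \|W(s)\|_{H^1}^2\, ds$ (obtained by integrating \eqref{eq:modeq}), these produce a closed system of integral inequalities: fixing any $\nu \in (0, \mu)$ and setting $\Lambda(T) := \sup_{t \ge T} e^{\nu(t - T)}\|W(t)\|_{H^1}$, the estimates yield $\Lambda(T) \lec |\la_-(T)| + \delta\, \Lambda(T)$ for $T$ large, whence $\|W(t)\|_{H^1} \lec e^{-\nu t}$. Feeding this back into \eqref{eq:modeq} gives the exponential convergence $\al(t) \to 1$ and $\theta(t) + t \to \ga_\I$ modulo a constant phase adjustment, and hence exponential approach of $u(t)$ to a soliton trajectory in $\cS_1$.

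Next, to identify the $2$-parameter family, I perform a Lyapunov--Perron fixed point in the exponentially weighted Strichartz norm $\|e^{\nu\cdot}\cdot\|_{L^\I H^1 \cap L^2 W^{1,6}}$: for each $\ga_\I \in S^1$ and each small real $\la_0$, I solve the system \eqref{eq:modeq}, \eqref{eq:la+int}, \eqref{eq:la-int}, \eqref{eq:mainPDE} linearized at $\calH(1, \ga_\I)$ with prescribed stable-mode data $\la_-(0) = \la_0$, with $\la_+$ determined by the backward formula \eqref{eq:la+int} (which automatically enforces the stability condition underlying~$\M_\cS$), and with $\Wdisp$ determined by its backward Duhamel representation (which enforces $W_\I = 0$). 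The weighted Strichartz bound of Lemma~\ref{lem:Beclem} and the smallness of every source term --- quadratic in $W$ plus $a, D$ controlled through \eqref{eq:modeq} --- render this map contractive, producing a unique triple $(\pi, \la_-, W_1)$ and hence a unique initial datum $u_0 \in \M_\cS$ meeting the hypothesis. Uniqueness of the parametrization follows by applying the same contraction to the difference of two candidate solutions with matching $(\ga_\I, \la_0)$; the exact soliton case $W \equiv 0$ corresponds precisely to $\la_0 = 0$.

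The principal obstacle is the circularity in the first step: the coefficients $a(t)$ and $D(t)$ do not \emph{a priori} decay exponentially, so exponential decay of $\dot\al, \dot\ga$ must be extracted simultaneously with that of $W$ via the modulation equations \eqref{eq:modeq}. Closing this circle depends crucially on the spectral gap between the eigenvalues $\pm i\mu$ and the essential spectrum of $\calH_\I$ (Proposition~\ref{prop:spectral}), and on the fact that $W_\I = 0$ permits the backward-Duhamel representation of $\Wdisp$, whose $L^\I_t H^1$ norm inherits the decay rate of its source.
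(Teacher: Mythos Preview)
Your proposal is correct and follows essentially the same route as the paper: linearize around the asymptotic Hamiltonian $\calH(1,\ga_\I)$, represent $\Wdisp$ by its backward Duhamel integral (which is exactly what $W_\I=0$ allows), keep $\la_-$ running forward from $\la_-(0)=\la_0$, determine $\la_+$ by~\eqref{eq:la+int}, and close everything in an exponentially weighted norm. The paper compresses your two steps into one: it performs the Lyapunov--Perron contraction directly in the strong norm $\|(W,\pi)\|_Y:=\|e^{t\rho}W\|_{L^\I_tL^2_x}+\|e^{t\rho}\dot\pi\|_{L^1_t}$ with terminal conditions $\pi(\I)=(1,\ga_\I)$ and $W_\I=0$, so that existence, uniqueness, and exponential decay all come out of the same fixed point; your separate bootstrap for a priori exponential decay is then subsumed by the uniqueness statement of that contraction. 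One small point: you invoke a ``weighted Strichartz bound of Lemma~\ref{lem:Beclem}'', but that lemma gives only unweighted Strichartz estimates. The paper sidesteps this by contracting merely in weighted $L^\I_tL^2_x$ (the $L^2$ propagator bound from Lemma~\ref{lem:Beclem} combined with exponential decay of the source suffices for the backward integral), and recovers the full $H^1$ Strichartz control afterward from the unweighted stability estimates~\eqref{eq:dotpath}--\eqref{eq:v_disp}. Your argument goes through with that adjustment.
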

\begin{proof}
This follows from the construction carried out in the proof of Proposition~\ref{prop:mf}, but with $\calH_{0}=\H(\al(\I),\ga(\I))$ as the driving linear operator; see
that part of the proof  dealing with scattering. By~\eqref{eq:MEal}, $\alpha(\I)=1$.  In fact, consider the representation $$W=\la_{+}G_{+} + \la_{-}G_{-}+ \Wroot+ \Wdisp$$ relative to this choice of $\H_{0}$,
and solve the system \eqref{eq:modeq}, \eqref{eq:orthcond},  \eqref{eq:la+int}, \eqref{eq:la-int}, \eqref{eq:mainPDE} with $a(t)=\al^{2}(t)-\al^{2}(\I)$, $F$ defined by~\eqref{eq:systemparts},
and  $D(t)$ given by~\eqref{eq:Ddef}. 
For \eqref{eq:modeq} one assigns the terminal conditions $\alpha(\I)=\al_{\I}=1$, $\ga(\I)=\ga_{\I}$, for \eqref{eq:la-int} we impose the initial conditions $\lambda_{-}(0)=\la_{0}$,  
and \eqref{eq:mainPDE} is solved with scattering data $W_{\I}= 0$,
cf.~\eqref{eq:scat_rep}.   Note that $\la_{+}$ does not require any further data, see~\eqref{eq:la+int}. Similarly, $\Wroot$ is determined by~\eqref{eq:orthcond}. 
The point is that we can solve the aforementioned system for $(\al(t),\ga(t))$, and $W(t)$  satisfying \eqref{eq:dotpath}, \eqref{eq:v_disp} 
by contracting in the {\em strong} norm
\EQ{\label{eq:Ynorm*}
\| (W,\pi) \|_{Y}:=  \| e^{t\rho} W \|_{L^{\I}_{t}((0,\I);L^{2})} + \| e^{t\rho} \dot \pi \|_{L^{1}((0,\I))}
}
for suitably chosen and small $\rho>0$. Note the contrast to \eqref{eq:Ynorm}. In the case of~\eqref{eq:Ynorm} the exponentially decaying weights forced us to start from $t=0$ when
carrying out the contraction argument. In the case of~\eqref{eq:Ynorm*}, however, we can solve for $\Wdisp$ from $t=\I$ due to the exponentially growing weights.
It is essential,  though, that for $\la_{-}$ we can still start at $t=0$; this is due to the fact that  equation~\eqref{eq:la-int} contains exponentially decreasing functions (one
therefore needs $\rho<\mu$ but nothing else). 
In summary, $\pi(t)-\pi(\I)$ decreases exponentially, as do $a(t), D(t)$, $\la_{\pm}$, $\Wdisp$, $\Wroot$.  This proves the exponential approach to $\cS_{1}$. 
Since $\al^{2}(t)-\al^{2}(\I)\to0$ and $\ga(t)-\ga(\I)\to0$ at an exponential rate, $u(t)$ in fact converges to a soliton trajectory in $\cS_{1}$ exponentially fast. 
The case of an exact soliton is given by $W=0$, which the contraction argument characterizes as~$\la_{-}(0)=\la_0=0$. 
\end{proof}

\section{Proof of Theorems~\ref{thm:main*}, \ref{thm:threshold}}
\label{sec:proofmain*}

\begin{proof}[Proof of Theorem~\ref{thm:main*}] We may rescale any solution to mass one. If $u$ is trapped by $\cS_{1}$,
then provided $\e\ll\delta$, where the latter is from the previous section, one concludes from the uniqueness
part of Proposition~\ref{prop:mf} that $u\in\M_{\cS}$ for large times (see Remark~\ref{rem:extendM} for the definition of~$\M_{\cS}$). Conversely, every solution starting on $\M_{\cS}$ is trapped.
Therefore, the set $(5)\cup(7)\cup(9)$ is the maximal backward evolution of $\M_{\cS}$, see Remark~\ref{rem:extendM},
whereas $(6)\cup(8)\cup(9)$ is the maximal forward evolution of   $\overline{\M_{\cS}}$ (complex conjugate). 
If we reverse time and conjugate, then the stable and unstable modes $\la_{\pm}$ are interchanged. This means that the intersection
of the center-stable manifold as $t\to\I$ with the corresponding one for $t\to-\I$ intersect transversely in a smooth manifold of codimension two, i.e., 
the center manifold.
\end{proof}

\begin{proof}[Proof of Theorem~\ref{thm:threshold}]
Let $u$ be a solution with $M(u)=M(Q)$. By assumption, $E(u)\le E(Q)$. If $E(u)<E(Q)$, then~\cite{HolR} show that either $u$ scatters at $\pm\I$,
or blows up in finite time in both directions. Therefore, assume that $E(u)=E(Q)$. If $u$ blows up  in finite negative time, then $K(u(t))<0$ for some $t<0$.
If $u$ were to scatter at $t=+\I$, then $K(u(t))>0$ for some $t>0$. But then $K(u(t_{0}))=0$ for some $t_{0}$, which implies that $u(t)=e^{i(-t+\theta_0)} Q$ which is a contradiction.
Thus, the sets $(3)$ and $(4)$ are empty. Now suppose $u$ is trapped by $\cS$ as $t\to\I$. Then for large times $u$ needs to lie on~$\M_{1,\ga}$ for some $\ga$, 
see~\eqref{eq:MEal}; in particular, $\al_{\I}=1$.  Since $E(u)=E(Q)$, it follows from Corollary~\ref{cor:paraM} that $u$ is uniquely described by $(\ga_{\I},\la_0)$.  
 Fixing the symmetry parameter $\ga_{\I}$, we see that the solution is described by the single real-valued parameter~$\la_0$.
If $\la_0=0$, then necessarily $u(t)=e^{i(-t+\ga_{\I})}Q$. Therefore, $\la_{0}\ne0$ and the sign of this parameter uniquely determines the sign of $K(u(t))$
upon ejection as in Lemma~\ref{lem:eject}, deciding the fate of $u(t)$ for negative times. This shows that one obtains two one-dimensional manifolds which approach
soliton trajectories from $\cS_{1}$ in the $H^{1}$-norm as $t\to\I$  exponentially fast, but which either blow up in finite negative time, or scatter to zero as $t\to-\I$. Since time-translation leaves these manifolds invariant,
it follows that they have the form described in Theorem~\ref{thm:threshold}. 
\end{proof}

\appendix
\section{Some tools from scattering theory}\label{sec:AppendixA}
The NLS equation~\eqref{eq:Schr3} is subcritical relative to $H^{1}(\R^{3})$ and critical relative to $\dot H^{\frac12}(\R^{3})$.  There is a classical local well-posedness theory
for~\eqref{eq:Schr3} for data of these regularity classes, see~\cite{Caz} as well as~\cite{HolR}. We work on the level of~$H^{1}$. 
As usual, we say that $(q,r)$ is Strichartz admissible in $\R^{3}$ if 
$$
\frac2q+\frac3r=\frac32
$$
and we set, using \cite{KeelTao}, 
$$
\|u\|_{S} = \sup_{\substack{(q,r)\;\text{ admissible} \\
2\leq r \leq 6, \;  2\leq q \leq \infty}} \|u\|_{L_t^qL_x^r}
$$
The following small data scattering lemma is
standard, and we leave the proof the reader.

\begin{lem}\label{lem:simplescat}
Let $u$ be a solution  of \eqref{eq:Schr3} in $\R^{3}$ on a  time-interval $I$. If there exists $t_{0}\in I$ with $\| \nabla u(t_{0})\|_{2}\le\mu$
where $\mu$ is a constant satisfying $\mu M(u)^{1/2}\ll1$, then $u$ extends to a global solution satisfying the global Strichartz bound
\EQ{\label{eq:nablaumu}
\| \nabla u \|_{L^{\infty}_{t}L^{2}_{x}} + \|\nabla u\|_{L^{2}_{t}L^{6}_{x}} \lec \mu
}
as well as
\[
\| u \|_{L^{\infty}_{t}L^{2}_{x}} + \|u\|_{L^{2}_{t}L^{6}_{x}} \lec M(u)^{1/2} 
\]
In particular, $u$ scatters: there exists $u_{0}\in H^{1}$ with $\| u(t) - e^{-it\Delta}u_{0}\|_{H^{1}}\to0$ as $|t|\to\infty$. 
\end{lem}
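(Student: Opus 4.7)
The plan is a bootstrap argument on Strichartz norms driven by the smallness of $\|\na u(t_0)\|_2$. By time-reversal symmetry I reduce to $t\ge t_0$, and let $T^*$ denote the maximal $H^1_{\mathrm{rad}}$-existence time to the right of $t_0$. For $T\in(t_0,T^*)$ set
\[
F(T) := \|\na u\|_{L^\I((t_0,T);L^2)} + \|\na u\|_{L^2((t_0,T);L^6)},
\]
which is continuous in $T$, with $F(t_0^+) \le \|\na u(t_0)\|_2 \le \mu$ by standard local theory.

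The central step is to derive a closed bootstrap inequality for $F$. I apply the endpoint Strichartz estimate in $\R^3$ (the pair $(2,6)$ is admissible) to $\na u$, which solves an inhomogeneous linear Schr\"odinger equation with source $\na(|u|^2 u)$, so
\[
F(T) \le C\mu + C\|\na(|u|^2 u)\|_{L^2((t_0,T); L^{6/5})} \lec \mu + \|\,|u|^2 |\na u|\,\|_{L^2 L^{6/5}}.
\]
H\"older's inequality in space $\bigl(\tfrac{5}{6}=\tfrac{1}{3}+\tfrac{1}{3}+\tfrac{1}{6}\bigr)$ and in time $\bigl(\tfrac{1}{2}=0+0+\tfrac{1}{2}\bigr)$ yields
\[
\|\,|u|^2|\na u|\,\|_{L^2 L^{6/5}} \le \|u\|_{L^\I L^3}^2 \|\na u\|_{L^2 L^6},
\]
and the Gagliardo-Nirenberg estimate $\|u\|_{L^3}\lec \|u\|_2^{1/2}\|\na u\|_2^{1/2}$ combined with mass conservation $\|u(t)\|_2\equiv (2M(u))^{1/2}$ produces $\|u\|_{L^\I L^3}^2 \lec M(u)^{1/2} F(T)$, so that
\[
F(T) \le C\mu + C\, M(u)^{1/2} F(T)^2.
\]

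To close the bootstrap, the set $\{T\in(t_0, T^*) : F(T) \le 2C\mu\}$ is nonempty, relatively closed by continuity, and open because the ansatz yields $F(T)\le C\mu(1 + 4C^2 \mu M(u)^{1/2})< 2C\mu$ under the hypothesis $\mu M(u)^{1/2}\ll 1$. Thus $F(T)\le 2C\mu$ uniformly on $(t_0, T^*)$; combined with mass conservation this gives a uniform $H^1$ bound on $u$, so the blow-up criterion forces $T^* = \sup I$ and an elementary continuation extends $u$ globally. An identical Strichartz argument at the $L^2$ level---using the same H\"older decomposition but without the derivative on one factor---absorbs $C M(u)^{1/2}\mu \|u\|_{L^2 L^6}$ into the left-hand side to give $\|u\|_{L^\I L^2}+\|u\|_{L^2 L^6}\lec M(u)^{1/2}$. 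For scattering, the finiteness of $F$ together with the Duhamel formula shows that $e^{it\Delta}u(t)$ is Cauchy in $H^1$ as $t\to\I$, since the tail $\|\na(|u|^2 u)\|_{L^2((t,\I);L^{6/5})}$ vanishes via the same H\"older chain; the limit gives the desired $u_0\in H^1$ with $\|u(t)-e^{-it\Delta}u_0\|_{H^1}\to 0$. The only point requiring care is arranging the H\"older exponents so that the bootstrap inequality carries exactly one power of $M(u)^{1/2}$ and $F^2$---this is what makes the hypothesis $\mu M(u)^{1/2}\ll 1$ precisely sharp for the argument; everything else is routine subcritical small-data theory.
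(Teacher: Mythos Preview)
The paper does not actually prove this lemma: it states that the result ``is standard, and we leave the proof [to] the reader.'' Your bootstrap via the endpoint Strichartz estimate, the H\"older splitting $\tfrac{5}{6}=\tfrac{1}{3}+\tfrac{1}{3}+\tfrac{1}{6}$, and Gagliardo--Nirenberg $\|u\|_{L^3}\lec\|u\|_2^{1/2}\|\nabla u\|_2^{1/2}$ is exactly the routine argument the authors omit, and it is correct.
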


For the cubic equation, one has the following version of the linear Bahouri-G\'erard profile decomposition, see~\cite{BaG}, \cite{Keraani}, as well
as Lemma~5.2 of~\cite{HolR} and Proposition~6.1 of~\cite{NakS}.  All function spaces are radial. 

\begin{prop}\label{prop:BG}
Let $\{u_n\}_{n=1}^{\infty}$ be a bounded sequence in~$H^{1}$. Then 
there exist a sequence  $\{v^j\}_{j=0}^{\infty}$ bounded in $H^{1}$, and sequences of times $t_n^j\in\R$ such that for 
any $k\ge1$ one has the following property, after replacing $\{ u_{n}\}_{n=1}^{\infty}$ by a subsequence: let 
 $\ga_n^k$ defined by
\EQ{\label{eq:linprofile}
  e^{-it\Delta} u_n = \sum_{0\le j<k} e^{-i(t+t_{n}^{j})\Delta}  v^j + \ga_n^k(t) }
we have for any $0\le j<k$, $\ga_n^k(-t_n^j) \rightharpoonup 0$ weakly in $H^{1}$ as $n\to\I$,  as well as 
\EQ{ \label{orth}
 \pt \lim_{k\to \I} \limsup_{n\to\I} \|\ga_n^k\|_{L^{p}_{t} L^{q}_{x}(\R\times\R^3)}=0,   
 \pq\lim_{n\to\I} |t_n^j-t_n^k| = \I}
 where $(p,q)$ is any pair which can be obtained by interpolation\footnote{One could use here $L^{\I}_{t}L^{p}_{x}$ for  $4>p>2$.} of (some nonzero amount of) $L^\I_tL^3_x$ with $L^{2}_{t}W^{1,6}_{x}\cap L^{\I}_{t}H^{1}_{x}$. 
 In particular, $(p,q)= (\I,3)$ as well as $(\I,4), (4,4)$ are such choices. 
  Moreover, one has the following partition of the $H^{1}$-norm:
 \EQ{ \label{H1 orth}
 \limsup_{n\to\I} \Bigl|\| u_n\|_{H^{1}}^2-\sum_{j<k}\| v^j\|_{H^{1}}^2-\| \ga_n^k\|_{H^{1}}^2 \Bigr|=0,}
 and the same holds for $L^{2}$. 
\end{prop}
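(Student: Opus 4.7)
The plan is to prove this by the classical iterative Bahouri--G\'erard extraction scheme, adapted to the radial $H^1$ setting, where the compactness of the embedding $H^1_{\mathrm{rad}}(\R^3)\hookrightarrow L^p(\R^3)$ for $2<p<6$ (Strauss) eliminates the spatial translation and dilation parameters that appear in the general (or critical) case. Thus the only profile parameter we must track is the time shift $t_n^j$, and the extraction relies solely on weak compactness plus this compact Sobolev embedding. The role of the critical norm in the argument is played by $L^\I_tL^3_x$, since $L^3(\R^3)$ sits on the $\dot H^{1/2}$ scale associated with the cubic nonlinearity; the other norms in the statement are obtained from this one by interpolation.

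The extraction proceeds as follows. Set $u_n^0:=u_n$ and, for each $k\ge 0$, define $\mu_k:=\limsup_{n\to\I}\|e^{-it\De}u_n^k\|_{L^\I_tL^3_x}$ (passing to a subsequence to make it a limit). If $\mu_k=0$ we stop. Otherwise, choose $t_n^k\in\R$ such that $\|e^{-it_n^k\De}u_n^k\|_{L^3_x}\ge\mu_k/2$ for large $n$, and extract a weakly convergent subsequence $e^{-it_n^k\De}u_n^k\rightharpoonup v^k$ in $H^1$. Radial compactness upgrades this to strong convergence in $L^3_x$, so $\|v^k\|_{L^3_x}\ge\mu_k/2$ and therefore $\|v^k\|_{H^1}\gec\mu_k$. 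Define
\EQ{\nn
u_n^{k+1}:=u_n^k - e^{-it_n^k\De}v^k,\qquad \ga_n^{k+1}(t):=e^{-it\De}u_n^{k+1},
}
which by construction satisfies $\ga_n^{k+1}(-t_n^k)\rightharpoonup 0$ in $H^1$. This produces the decomposition \eqref{eq:linprofile}.

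For the orthogonality $|t_n^j-t_n^k|\to\I$ ($j<k$), argue by contradiction: if $t_n^k-t_n^j\to\tau\in\R$ along a subsequence, then $e^{-i\tau\De}$ is strongly continuous on $H^1$, and applying $e^{-i(t_n^k-t_n^j)\De}$ to $\ga_n^{j+1}(-t_n^j)\rightharpoonup 0$ gives $e^{-it_n^k\De}u_n^{j+1}\rightharpoonup 0$; iterating the subtraction of the intermediate profiles (whose weak limits vanish after the shift $e^{-i(t_n^k-t_n^\ell)\De}$ by the same mechanism) forces $v^k=0$, contradicting $\|v^k\|_{H^1}\ge\mu_k>0$. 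The $H^1$ partition \eqref{H1 orth} then follows from the Pythagorean expansion
\EQ{\nn
\|u_n\|_{H^1}^2 = \sum_{j<k}\|v^j\|_{H^1}^2 + \|\ga_n^k(0)\|_{H^1}^2 + 2\Re\sum_{j<\ell<k}\LR{e^{-it_n^j\De}v^j,e^{-it_n^\ell\De}v^\ell}_{H^1} + o(1),
}
together with $\LR{e^{-it_n^j\De}v^j,e^{-it_n^\ell\De}v^\ell}_{H^1}=\LR{e^{i(t_n^\ell-t_n^j)\De}v^j,v^\ell}_{H^1}\to 0$, which uses dispersive decay for Schwartz test functions (dense in $H^1$) to conclude that $e^{it\De}f\rightharpoonup 0$ in $H^1$ as $|t|\to\I$ for each fixed $f\in H^1$. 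The same identity for $L^2$ holds since $e^{-it\De}$ is an $L^2$-isometry.

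The main obstacle, and the most technical part, is to show $\lim_{k\to\I}\limsup_{n\to\I}\|\ga_n^k\|_{L^p_tL^q_x}=0$ for the stated pairs. For the endpoint $(\I,3)$ this follows from $\|v^k\|_{H^1}\gec\mu_k$ combined with the partition identity, which yields the summability $\sum_j\|v^j\|_{H^1}^2\le\limsup_n\|u_n\|_{H^1}^2<\I$ and therefore $\mu_k\to 0$. The Strichartz estimate applied to $\ga_n^k(0)$ (bounded in $H^1$) gives a uniform bound $\|\ga_n^k\|_{L^\I_tH^1_x\cap L^2_tW^{1,6}_x}\lec 1$, hence $\|\ga_n^k\|_{L^2_tL^\I_x\cap L^\I_tL^6_x}\lec 1$ by $W^{1,6}\hookrightarrow L^\I$ and $H^1\hookrightarrow L^6$ in $\R^3$. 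H\"older interpolation then yields, for example,
\EQ{\nn
\|\ga_n^k\|_{L^4_{t,x}}\le\|\ga_n^k\|_{L^\I_tL^3_x}^{1/2}\|\ga_n^k\|_{L^2_tL^6_x}^{1/2},\qquad
\|\ga_n^k\|_{L^\I_tL^4_x}\le\|\ga_n^k\|_{L^\I_tL^3_x}^{1/2}\|\ga_n^k\|_{L^\I_tL^6_x}^{1/2},
}
and a similar interpolation covers any pair obtained by interpolating a nonzero fraction of $L^\I_tL^3_x$ against $L^\I_tH^1_x\cap L^2_tW^{1,6}_x$. This gives the full conclusion.
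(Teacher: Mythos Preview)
Your proposal is correct and follows essentially the same approach as the paper's proof: extract profiles iteratively by maximizing the $L^\infty_t L^3_x$ norm, use the radial compact embedding $H^1_{\mathrm{rad}}\hookrightarrow L^3$ to guarantee nontrivial weak limits, derive time-shift orthogonality by contradiction, obtain the $H^1$ Pythagorean identity, and interpolate for the remaining norms. The paper's proof is a brief sketch (with details deferred to \cite{HolR}, \cite{NakS}) and begins by extracting $v^0$ as the weak limit of $u_n$ itself (i.e., fixing $t_n^0=0$) before switching to the $L^\infty_t L^3_x$ selection, but this is a cosmetic difference only.
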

\begin{proof}  
One has $u_{n}\rightharpoonup v^{0}$ in $H^{1}$ (henceforth, we pass to subsequences without further mention). 
By the compact radial imbedding $H^{1}\hookrightarrow L^{p}$ for $2<p<6$ one then has strong convergence in~$L^{4}$. 
Set $t_{n}^{0}=0$. Passing to $u_{n}-v^{0}$, we may assume that $u_{n}\rightharpoonup 0$. Clearly, 
$\gamma_{n}^{1}(t) =   e^{-it\Delta} u_{n}$
satisfies $\gamma_{n}^{1}(0)\rightharpoonup 0$ as claimed. 
If 
\EQ{\label{eq:tjn}
 \liminf_{n\to\I} \| e^{-it\Delta} u_{n} \|_{L^{\I}_{t} L^{3}_{x}} =0, }
then the process terminates. Otherwise, pick $t_{1,n}$ so that $L^{\infty}_{t}$ in~\eqref{eq:tjn} is attained at those times. 
Then $e^{-it_{1,n}\Delta} u_{n}\rightharpoonup v^{1}\ne0$ by the aforementioned compact imbedding. Since $u_{n}\rightharpoonup 0$, we must have
$|t_{1,n}|\to\infty$ as $n\to\I$. 
The process now repeats inductively in a standard way, see for example~\cite{HolR}, \cite{NakS}. 
\end{proof}

Next, one has the following perturbation lemma, cf.~Lemma~6.2 in~\cite{NakS}, and Proposition~2.3 in~\cite{HolR}. 

\begin{lem} \label{PerturLem} 
There are continuous functions $\nu_0,C_0:(0,\I)^2\to(0,\I)$ such that the following holds: 
Let $I\subset \R$ be an interval, $u,w\in C(I;H^{1})$ satisfying for some $A,B>0$ and $t_0\in I$
\EQ{ \label{asm ebd}
  \|u\|_{L^\I_t(I;H^{1})} + \| w\|_{L^\I_t(I;H^{1})}  \le A, \pq \|w\|_{L^4_t(I;L^4_x)} \le B,} 
\EQ{
 \pt\|eq(u)\|_{L^{\frac85}_t(I;L^{\frac43}_x)} 
   + \|eq(w)\|_{L^{\frac85}_t(I;L^{\frac43}_x)} + \|\ga_0\|_{L^{\frac83}_t(I;L^{4}_x)} \le \nu\le  \nu_0(A,B),}
where $eq(u):=i\p_{t }u-\De u - |u|^{2}u$ and similarly for~$w$, and $\ga_0:= e^{-i(t-t_0)\Delta}(u- w)(t_0)$.  Then we have 
\EQ{ \label{eq:gammaStrich}
  \|u-w- \ga_0\|_{L^\I_tL^2_x(I)} \le C_0(A,B)\nu,\pq \|u-w\|_{L^4_{t,x}(I)}\le C_0(A,B)\nu^{1/3}.} 
\end{lem}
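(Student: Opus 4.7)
The approach is the long-time perturbation scheme from Kenig--Merle~\cite{KM1}. Set $v := u - w$ and $r := v - \ga_0$, so that $r(t_0) = 0$ and
$$(i\p_t - \De)r = (|u|^2 u - |w|^2 w) + eq(u) - eq(w).$$
My plan is to apply Strichartz estimates to this forced linear Schr\"odinger equation on a partition of~$I$ chosen so that the cubic contribution becomes contractive. By symmetry I treat $I \cap [t_0,\I)$; the backward direction is analogous.

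By continuity of $t \mapsto \|w\|_{L^4_{t,x}([t_0,t])}$ one first splits $I \cap [t_0,\I)$ into $N$ consecutive subintervals $I_j = [\tau_{j-1},\tau_j]$ with $\tau_0 = t_0$ and $\|w\|_{L^4_{t,x}(I_j)} \le \eta$, for a small $\eta = \eta(A)$ to be chosen; the count $N$ depends only on $B$ and $\eta$, hence on $(A,B)$. On each $I_j$, invoke Strichartz with the dual admissible pair $(8/5,4/3)$ to get
$$\|r\|_{L^\I_t L^2_x \cap L^{8/3}_t L^4_x(I_j)} \lec \|r(\tau_{j-1})\|_2 + \||u|^2u - |w|^2w\|_{L^{8/5}_t L^{4/3}_x(I_j)} + \nu.$$

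Decomposing $u = w + r + \ga_0$, the cubic difference $|u|^2u - |w|^2w$ is a finite sum of monomials $w^{k_1} r^{k_2} \ga_0^{k_3}$ with $k_1+k_2+k_3 = 3$ and $k_2+k_3 \ge 1$. Using H\"older in space-time, the Sobolev embedding $H^1 \hookrightarrow L^6$, and the interpolation $\|f\|_{L^8_t L^4_x} \lec \|f\|_{L^\I_t L^4_x}^{1/2} \|f\|_{L^4_{t,x}}^{1/2}$, each such monomial is bounded in $L^{8/5}_t L^{4/3}_x(I_j)$ by $C(A)\bigl(\eta + \|r\|_{L^{8/3}_t L^4_x(I_j)} + \nu\bigr)^2 \bigl(\|r\|_{L^{8/3}_t L^4_x(I_j)} + \nu\bigr)$. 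Crucially, the linear-in-$r$ portion $\||w|^2 r\|_{L^{8/5}_t L^{4/3}_x(I_j)}$ carries the prefactor $C(A)\|w\|_{L^4_{t,x}(I_j)} \le C(A)\eta$; choosing $\eta=\eta(A)$ small, a continuity-in-time bootstrap on $\|r\|_{L^{8/3}_t L^4_x(I_j)}$ closes to yield
$$\|r\|_{L^\I_t L^2_x \cap L^{8/3}_t L^4_x(I_j)} \le C_1(A)\bigl(\|r(\tau_{j-1})\|_2 + \nu\bigr).$$

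Since $r(\tau_0) = 0$, induction then gives $\|r(\tau_j)\|_2 \le (2C_1)^j \nu$, and summing the subinterval bounds yields $\|r\|_{L^\I_t L^2_x(I)} \le C_0(A,B)\nu$ with $C_0 = (2C_1)^N$. For $L^4_{t,x}$, combine the H\"older interpolation $\|f\|_{L^4_{t,x}} \le \|f\|_{L^\I_t L^4_x}^{1/3} \|f\|_{L^{8/3}_t L^4_x}^{2/3}$ with the uniform bound $\|v\|_{L^\I_t L^4_x} \lec A$ from Sobolev to obtain $\|v\|_{L^4_{t,x}(I)} \lec C_0(A,B)\nu^{2/3}$, which is $\lec C_0 \nu^{1/3}$ for $\nu\le 1$. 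The main obstacle is arranging that the bootstrap closes uniformly across all $N$ subintervals: $\eta(A)$ must be chosen small enough to absorb the linear-in-$r$ cubic term, and $\nu_0(A,B)$ small enough that the higher-order nonlinear corrections remain controlled throughout the $N$-fold iteration, in which the per-step constant $C_1$ compounds as $C_1^N$.
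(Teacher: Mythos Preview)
Your proof is correct and follows essentially the same approach as the paper: partition the interval so that $\|w\|_{L^4_{t,x}(I_j)}$ is small, apply the Strichartz estimate with dual pair $(8/5,4/3)$ on each piece, bootstrap the cubic forcing using the $L^8_tL^4_x$ interpolation to make the linear-in-$r$ term contractive, and induct over the partition; the only cosmetic difference is that the paper re-linearizes at each endpoint via $\gamma_j:=e^{-i(t-t_j)\Delta}\gamma(t_j)$ and tracks $\|\gamma_{j}\|_{Z}$ rather than $\|r(\tau_{j-1})\|_2$. One small imprecision: your blanket monomial bound $C(A)(\eta+\|r\|_Z+\nu)^2(\|r\|_Z+\nu)$ is not literally achievable for the pure $(r+\gamma_0)^3$ term by H\"older alone---the correct superlinear piece is $C(A)(\|r\|_Z+\nu)^{5/3}$, obtained by placing one or two factors in $L^\infty_tL^4_x\hookleftarrow L^\infty_tH^1$---but this still closes the bootstrap and your isolation of the linear-in-$r$ term with prefactor $C(A)\eta$ is the crucial point.
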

\begin{proof}   
We fix a $L^2$-admissible Strichartz space $Z:=L^{\frac83}_t(I;L^4_x)$ and 
\EQ{\nn 
 \ga:=u-w, \pq e:=(i\p_t-\De)(u-w)-|u|^2u+|w|^2 w=eq(u)-eq(w).} 
There exists a partition  of $I\cap[t_{0},\infty)$ such that
\EQ{\nn 
 \pt t_0<t_1<\cdots<t_n\le\I,\pq I_j=(t_j,t_{j+1}),\pq I\cap(t_0,\I)=(t_0,t_n),
\pr \max_{0\le j<n}\|w\|_{L^{4}_{t}(I_j;L^{4}_{x})} \le \de, \pq n\le C(B,\de).}
We omit the estimate on $I\cap(-\I,t_0)$ since it is the same by symmetry. 
Let $\ga_j(t):=e^{-i(t-t_j)\Delta}\ga(t_j)$. 
Then the Strichartz estimate applied to the equations of $\ga$ and $\ga_{j+1}$ implies 
\EQ{ \label{est S'}
 \pn\|\ga-\ga_j\|_{Z(I_j)} + \|\ga_{j+1}-\ga_j\|_{Z(\R)}
 \pt\lec \| |w+\ga|^2(w+\ga)-|w|^2 w+e\|_{L^{\frac85}_t(I_j;L^{\frac43}_x)}
 \pr\lec A\de \| \ga \|_{Z(I_{j})} + A^{\frac13}\|\ga\|_{Z(I_j)}^{\frac53}+\nu,}where in the second step the H\"older inequality was used in $t$ and in $x$, together with the Sobolev $H^1_x\subset L^4_x$.  
Hence by induction on $j$ and continuity in $t$, one obtains provided $A\delta\ll1$, 
\EQ{ \label{S' iterate}
 \|\ga\|_{Z(I_j)} + \|\ga_{j+1}\|_{Z(I)}
\le (2C)^j\nu \le (2C)^n\nu\ll\de,}
provided that $\nu_0(A,B)$ is chosen small enough. Repeating the estimate~\eqref{est S'} once more, we can bound $\|\ga-\ga_0\|_{L^\I_tL^2_x}$ as well. The bound in $L^4_{t,x}$ is obtained by the interpolation 
\EQ{
 \|u\|_{L^4_{t,x}}\lec \|u\|_{L^4_t W^{1/3,3}} \lec \|u\|_{L^\I_tH^1_x}^{1/3}\|u\|_{Z}}
 and we are done. 
\end{proof}

\section{Spectral properties and linear dispersive estimates}
 \label{sec:AppendixB}
 
 We begin with a result on the spectral properties of $\calH$, see~\eqref{eq:H0} with $\alpha(0)=1$, $\gamma(0)=0$.  As usual, $Q=Q(\cdot,1)$ for simplicity. 
 We view all operators in this section as complex linear ones. 
 Then 
 \[
 \calH = \left[ \begin{matrix} 
 -\Delta + 1 - 2Q^2(\cdot,1) & -  Q^2(\cdot,1) \\
    Q^2(\cdot,1) &  \Delta - 1 + 2Q^2(\cdot,1)
 \end{matrix} \right]
 \]
 is conjugate to 
 \EQ{\label{eq:HL-L+}
\frac12 \Big[\begin{matrix} 1&1 \\ -i&i \end{matrix}\Big] \calH \Big[\begin{matrix} 1 & i \\ 1  & -i \end{matrix}\Big]
 = i\Big[\begin{matrix} 0 & L_{-} \\ -L_{+} & 0 \end{matrix}\Big] 
}
where 
 \[
L_+ = -\Delta + 1 -3Q(\cdot,1)^2,\qquad L_-=-\Delta+1 -Q(\cdot,1)^2
\]
The equality \eqref{eq:HL-L+} is to be considered as one between complex linear operators. However, it is also natural to view the left-hand side
as acting on vectors $\binom{u_{1}}{u_{2}}$ with $u_{1}, u_{2}$ real-valued. In that case the right-hand side needs to be rewritten as
\[
   \Big[\begin{matrix} L_{+} & 0  \\ 0 & L_{-}  \end{matrix}\Big] 
=\LL
\]
This is exactly the point of view taken in Section~\ref{sec:ground}, where $\LL$ is considered as a real-linear operator. 
The spectral properties of $L_+,L_-$ and especially $\calH$ are quite delicate. 
The following result summarizes
what can be obtained by a rigorous analysis, see~\cite{S}, \cite{ES},  \cite{HL},  supported
by numerics, such as \cite{DS} and \cite{MS}. 
In the work of Marzuola, Simpson~\cite{MS} numerics is used to  assist  index computations of certain quadratic forms in the spirit of the virial argument of Fibich, Merle, Raphael~\cite{FMR}.

For simplicity, we restrict ourselves to the Hilbert space\footnote{The only change to Proposition~\ref{prop:spectral} is that $\calH$ has a root-space of dimension eight rather than two.} 
 $L^2_{\mathrm{rad}}(\R^3)$ in Proposition~\ref{prop:spectral}. 

\begin{prop}\label{prop:spectral}
The essential spectrum of $\calH$  is $(-\infty,-1]\cup[1,\infty)$ and there are no imbedded eigenvalues or resonances 
in the essential spectrum, the discrete spectrum is of the form $\{0, i\mu ,-i\mu \}$
where $\mu >0$ with $\pm i\mu $ both simple eigenvalues, the root-space at $0$ is of dimension two, and the thresholds $\pm 1$ are
neither eigenvalues nor resonances. In explicit form, the root space is spanned by 
\begin{equation}\label{eq:xi0eta0}
\xi_0= \binom{Q }{-Q },\qquad \eta_0=\binom{\partial_\alpha Q }{\partial_\alpha Q }
\end{equation}
and one has $\calH\xi_0=0, \calH^2 \eta_0=0$. 
 Let  $\calH G_{\pm} = \mp i\mu  G_{\pm}$ with the normalization $\|G_{\pm}\|_{2}=1$. Then the eigenfunctions $G_{\pm}$ are
exponentially decaying and of the form $G_{\pm}=\binom{g_{\pm}}{\bar{g_{\pm}}}$.
\end{prop}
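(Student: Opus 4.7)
The plan is to reduce the spectral analysis of $\calH$ to that of the scalar Schr\"odinger operators $L_{\pm}$ via the block-conjugation~\eqref{eq:HL-L+}. Squaring the right-hand side of~\eqref{eq:HL-L+} yields
\[
\calH^{2}\sim \mathrm{diag}(L_{-}L_{+},\ L_{+}L_{-}),
\]
so the non-real spectrum, the generalized kernel at~$0$, and the threshold behaviour of $\calH$ are all governed by the scalar operators $L_{\pm}$. Writing $\calH = \sigma_{3}(-\Delta+1) + V$ with $V$ a bounded, exponentially decaying matrix potential built out of $Q^{2}$, the potential $V$ is relatively compact with respect to $\sigma_{3}(-\Delta+1)$, so Weyl's theorem gives $\sigma_{\mathrm{ess}}(\calH)=(-\I,-1]\cup[1,\I)$. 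The absence of embedded eigenvalues and resonances in the open essential spectrum is then a limiting-absorption / unique-continuation argument as in~\cite{ES,HL}.

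For the root space at $0$ the argument proceeds by direct computation. From $-\Delta Q + Q = Q^{3}$ one checks $\calH\xi_{0}=0$; differentiating $-\Delta Q(\cdot,\alpha)+\alpha^{2}Q(\cdot,\alpha)=Q^{3}(\cdot,\alpha)$ in $\alpha$ at $\alpha=1$ yields $L_{+}Q'=-2Q$, and hence $\calH\eta_{0}=-2\xi_{0}$. Thus $\{\xi_{0},\eta_{0}\}$ forms a Jordan chain of length two. To see it exhausts the generalized kernel on the radial sector, combine the display above with the classical facts $\ker L_{-}|_{\mathrm{rad}}=\mathrm{span}\{Q\}$ and $\ker L_{+}|_{\mathrm{rad}}=\{0\}$ (the latter because $\ker L_{+}=\mathrm{span}\{\p_{x_{j}}Q\}$ contains no radial function). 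A short computation then gives $\ker(L_{-}L_{+})|_{\mathrm{rad}}=\mathrm{span}\{Q'\}$ and $\ker(L_{+}L_{-})|_{\mathrm{rad}}=\mathrm{span}\{Q\}$, whence $\dim\ker\calH^{2}|_{\mathrm{rad}}=2$. The nondegeneracy $\LR{Q|Q'}=-M(Q)\ne 0$ then ensures that $Q\notin \mathrm{Range}(L_{+}L_{-})|_{\mathrm{rad}}$ (the only candidate preimage is $-Q'/2$, which is not orthogonal to~$Q$), ruling out any longer Jordan block, so $\ker\calH^{k}=\ker\calH^{2}$ for all $k\ge 2$.

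The imaginary pair $\pm i\mu$ is produced as follows. Denoting by $MG$ the image of a prospective eigenvector $G$ of $\calH$ at $-i\mu$ under the intertwiner in~\eqref{eq:HL-L+}, and writing $MG=\binom{u}{v}$, the eigenvalue equation becomes $L_{+}u=\mu v,\ L_{-}v=-\mu u$, so that $L_{-}L_{+}u=-\mu^{2}u$. Existence of such a negative eigenvalue of $L_{-}L_{+}$ on the radial subspace is the $L^{2}$-supercritical Vakhitov-Kolokolov / Grillakis-Shatah-Strauss condition: $\frac{d}{d\alpha}\|Q(\cdot,\alpha)\|_{2}^{2}=-\alpha^{-2}M(Q)<0$, which combined with Lemma~\ref{L pos} yields a radial $u$ with the required spectral property. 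Simplicity of $\pm i\mu$ and the absence of other non-real eigenvalues follow from a Pego-Weinstein-type index count built on $L_{+}|_{\mathrm{rad}}$ having exactly one negative eigenvalue and $L_{-}\ge 0$. Since $L_{\pm}$ are real, one may take $u,v$ real, and then
\[
G_{+} = M^{-1}\binom{u}{v}=\binom{u+iv}{u-iv}=\binom{g_{+}}{\bar g_{+}}
\]
is automatically admissible, while $G_{-}=\bar G_{+}$ by the real structure $\overline{\calH G}=\calH\bar G$ of $\calH$. Exponential decay of $G_{\pm}$ follows from standard Agmon estimates, since $\pm i\mu$ lies strictly inside the spectral gap~$(-1,1)$.

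The hardest part, and the sole place where the argument must be supplemented by non-rigorous input, is the exclusion of non-zero real eigenvalues in the gap $(-1,1)\setminus\{0\}$ and of threshold resonances at $\pm 1$. Via~\eqref{eq:HL-L+} these are equivalent to $L_{\pm}|_{\mathrm{rad}}$ having no eigenvalue in $(0,1]$ and no zero-energy resonance. For the cubic NLS in $\R^{3}$ both facts are supported by the numerically-assisted analysis of~\cite{MS} (building on~\cite{DS} and the Fibich-Merle-Rapha\"el framework~\cite{FMR}), which verifies the sign of certain scalar quadratic forms governing these thresholds.
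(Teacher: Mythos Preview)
Your outline is broadly correct and parallels the paper's own proof, which is essentially a guided tour of the literature. The computations for the root space, the Jordan-chain termination via $\LR{Q|Q'}\ne 0$, the existence of $\pm i\mu$ from the supercritical Vakhitov--Kolokolov sign, and the admissible form of $G_{\pm}$ are all fine.

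There is one genuine gap. You write that the absence of embedded eigenvalues and resonances in the open essential spectrum is ``a limiting-absorption / unique-continuation argument as in~\cite{ES,HL}'', and you later single out the gap eigenvalues and threshold resonances as ``the sole place where the argument must be supplemented by non-rigorous input''. This is not correct for the matrix operator~$\calH$. The references \cite{ES} and \cite{HL} do not establish the absence of embedded eigenvalues for~$\calH$: \cite{ES} proves dispersive estimates for scalar Schr\"odinger operators \emph{assuming} suitable spectral hypotheses, and \cite{HL} proves exponential decay of eigenfunctions once they are known to exist. For non-selfadjoint matrix operators of this type, the standard scalar Kato/Agmon machinery does not apply directly, and the paper explicitly attributes the exclusion of embedded eigenvalues to the numerically-assisted work of Marzuola--Simpson~\cite{MS}. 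So the numerical input enters in \emph{two} places, not one: both the embedded eigenvalues and the gap/threshold statements rely on~\cite{MS} (or, for the latter, alternatively on~\cite{S} combined with~\cite{DS}).

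A minor point: your simplicity argument (``Pego--Weinstein-type index count'') is a correct pointer but is doing real work here and deserves a reference; the paper cites~\cite{S} and Grillakis~\cite{Gril1,Gril2} for the structure of the discrete spectrum.
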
 
\begin{proof}
The description of the root space of~$\calH$ goes back to~\cite{Wein1}. The imaginary spectrum was identified in~\cite{S}, and for the exponential decay of the corresponding 
eigenfunctions see~\cite{HL}. See Grillakis~\cite{Gril1}, \cite{Gril2} for more on the discrete spectrum. All these results are based on purely analytical arguments.  The fact that $\calH$ does not have embedded eigenvalues in
the essential spectrum was shown in~\cite{MS}, assisted by some numerical computations. 
Their proof also implies that there are no non--zero eigenvalues in the gap $[-1,1]$,  and that the thresholds are
not resonances. Alternatively, the latter two facts  also follow by the analytical arguments in~\cite{S} combined with the numerics in~\cite{DS}. 
\end{proof}

Next, we present a result for non-selfadjoint Schr\"odinger evolutions which originates in~\cite{Bec1} (in fact, Beceanu proves a stronger result in Lorentz spaces).  
Let $S=\bigcap_{p,q} L^p_t(\R^+,L^{q}_x)$ be the Strichartz space with $2\le p \le\infty$, $2\le q\le 6$, and $\frac{2}{p}+\frac{3}{q}=\frac32$,
 and  let $S^*$ be its dual. 

 \begin{lem}\label{lem:Beclem}
Let $a\in L^\infty(\R)$ satisfy $\|a\|_\infty<c_0$ for some small absolute constant $c_0$. 
The solution $\Psi\in C(\R; L^2(\R^3) )\cap C^1(\R;H^{-2}(\R^3) )$
of the problem
 \EQ{\label{eq:Psiateq}
 i\partial_t\Psi + \calH \Psi + a(t) \sigma_3 P_c\, \Psi=F \in S^*,\qquad \Psi(0)=\Psi_0 \in L^2(\R^3)
 }
where $P_c$ is the projection corresponding to the essential spectrum of~$\calH$, 
obeys the Strichartz estimates 
\EQ{\label{eq:Stricha}
\|P_c \Psi \|_{S} \lec \|\Psi_0\|_{L^2(\R^3)} + \|F\|_{S^*}
}
Furthermore, if $\Psi_0\in H^1$, then 
\EQ{\label{eq:Stricha1}
\|\nabla P_c \Psi \|_{S} \lec \|\Psi_0\|_{H^1(\R^3)} + \| \nabla F\|_{S^*}
}
Finally, one has scattering: there exists $\Psi_{\I}\in H^{1}$ such that 
\EQ{\label{eq:Sysscat}
 P_{c} \Psi(t) = e^{i\sigma_{3}\int_{0}^{t} (-\Delta+1+a(s))\, ds\, } \Psi_{\I} + o(1)
}
in $H^{1}$ as  $t\to\I$. 
 \end{lem}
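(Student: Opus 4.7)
The plan is to establish Strichartz estimates for the unperturbed propagator $e^{it\calH}P_c$, then to treat $a(t)\sigma_3$ as a source via Duhamel, and finally to absorb this source by a Neumann iteration using the smallness of $\|a\|_\infty$. For the unperturbed semigroup, Proposition~\ref{prop:spectral} gives essential spectrum $(-\infty,-1]\cup[1,\infty)$ with no embedded eigenvalues or resonances and regular thresholds $\pm 1$. Under these hypotheses, the distorted Fourier calculus of Erd\"ogan, Schlag~\cite{ES} yields the dispersive bound $\|e^{it\calH}P_c f\|_{L^\infty}\lec \langle t\rangle^{-3/2}\|f\|_{L^1}$, and a Keel--Tao argument then delivers the full homogeneous and inhomogeneous Strichartz estimates $\|e^{it\calH}P_c f\|_S \lec \|f\|_{L^2}$ and $\|\int_0^t e^{i(t-s)\calH}P_c G(s)\,ds\|_S \lec \|G\|_{S^*}$.

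Since $P_c$ commutes with $\calH$ and is time independent, $\psi := P_c\Psi$ satisfies $i\partial_t\psi + \calH\psi + a(t)P_c\sigma_3\psi = P_c F$; combining Duhamel's formula with the Strichartz bounds above gives
\begin{equation*}
 \|\psi\|_S \le C_0\bigl(\|\psi(0)\|_{L^2} + \|F\|_{S^*}\bigr) + \|{\mathcal K}\psi\|_S,\qquad {\mathcal K}\phi(t) := -i\int_0^t e^{i(t-s)\calH}P_c\sigma_3\, a(s)\phi(s)\,ds.
\end{equation*}
The hard part is to prove $\|{\mathcal K}\|_{S\to S}\le C\|a\|_\infty$ with $C$ an absolute constant: once this is in hand, choosing $c_0$ so that $Cc_0<1$ yields~\eqref{eq:Stricha} by Neumann iteration. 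This bound is delicate because $a(t)$ is merely bounded in time with no decay, so the usual Kato-smoothing arguments tailored to spatially localized potentials do not apply. The crucial observation, due to Beceanu~\cite{Bec1}, is that the perturbation factors into the scalar-in-time multiplier $a(t)$ and the constant-in-time bounded operator $P_c\sigma_3$; this product structure permits one to represent ${\mathcal K}$ inside a Wiener-type operator algebra built on Strichartz spaces, from which the sought bound follows. I will rederive just the portion of~\cite{Bec1} needed in our setting.

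For the $H^1$ bound~\eqref{eq:Stricha1}, I will differentiate the equation: $\nabla\psi$ obeys an analogous system, modulo commutator terms of the form $[\nabla,\calH]\psi$ whose coefficients are derivatives of the $Q^2$-matrix potential in $\calH$ and are therefore bounded with exponential decay; such terms are absorbed by the $L^2$-level Strichartz bound already obtained. Finally, the scattering statement~\eqref{eq:Sysscat} will follow from a Cook-type argument: the Duhamel integrand lies in $S^*$, so the integral converges in $L^2$; standard wave-operator theory for $e^{it\calH}P_c$, available because the thresholds are regular and $\calH-\sigma_3(-\Delta+1)$ is exponentially localized, produces a limiting free profile $\Psi_\infty\in H^1$; and since $\sigma_3$ commutes with $\sigma_3(-\Delta+1)$, the $a(t)$-perturbation only contributes the explicit scalar phase factor $e^{i\sigma_3\int_0^t a(s)\,ds}$ appearing in~\eqref{eq:Sysscat}.
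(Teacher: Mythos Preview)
Your plan has a genuine gap at the central step: the operator bound $\|\mathcal K\|_{S\to S}\le C\|a\|_\infty$ is false, so no Neumann iteration can close. To see this already in the scalar model $\calH=-\Delta+1$, $P_c=\Id$, $\sigma_3=1$, take $a$ constant and $\phi(s)=e^{is(-\Delta+1)}\phi_0$ with $\phi_0\in L^2$; then $\phi\in S$ by Strichartz, but
\[
\mathcal K\phi(t)=-ia\int_0^t e^{i(t-s)(-\Delta+1)}e^{is(-\Delta+1)}\phi_0\,ds=-iat\,e^{it(-\Delta+1)}\phi_0,
\]
which grows linearly in $t$ and is not even in $L^\infty_tL^2_x$. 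The same resonance occurs in the matrix case. Your invocation of Beceanu's Wiener algebra is a misreading: that algebra is built on kernels $V_2 e^{i(t-s)\calH_0}V_1$ with $V_1,V_2$ \emph{spatially localized}, so that pointwise dispersive decay gives $L^1_t$ integrability of the operator-valued convolution kernel. The perturbation $P_c\sigma_3$ carries no spatial decay, so there is no such kernel to put into a Wiener algebra.

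The paper's proof (following \cite{Bec1}) does not attempt to bound $\mathcal K$. Instead it exploits the one structural fact you have not used: $\sigma_3$ commutes with $\calH_0=\sigma_3(-\Delta+1)$. One gauges $a(t)\sigma_3$ into the free propagator via $U(t)=e^{iA(t)\sigma_3}$, $A(t)=\int_0^t a$, so that after conjugation the reference operator is $\calH_0$ and the perturbation becomes the genuinely spatially localized $U^{-1}(V-\calH P_d+i\delta P_d)U$. Factoring this as $V_1V_2$ with both factors smoothing $L^p\to L^q$, one lands on the operator $\tilde T_0 F(t)=V_2\int_0^t e^{i(t-s)\calH_0}U(t)U(s)^{-1}V_1F(s)\,ds$ on $L^2_{t,x}$. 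Pointwise dispersion shows $\|\tilde T_0-T_0\|\lec \|a\|_\infty^{1/4}$ where $T_0$ drops the $U$'s; and $(\Id-iT_0)^{-1}=\Id+iT_1$ holds \emph{exactly} by a Duhamel algebraic identity, with $T_1$ built from the full propagator $e^{it\calH P_c-t\delta P_d}$. This inversion is not perturbative in any small parameter; it is an algebraic resolvent identity. That is the step your outline is missing.
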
 
 \begin{proof} We follow~\cite{Bec1}. 
Clearly, the proof should be perturbative in~$a$ by nature, with $a=0$ being the nontrivial statement
 that Strichartz estimates (including the endpoint) hold for the equation
 \[
  i\partial_t\Psi + \calH \Psi =F
 \]
 However, the latter has been established by several authors, see for example \cite[Theorem~1.3]{Bec1} and~\cite{CucMiz}.
Due to the lack of any physical localization of the $a(t)$ term, the perturbative analysis is nontrivial. On the other hand,
note that any perturbation of the form $a(t) \chi(x)$ where the multiplier $\chi$ is bounded $L^6(\R^3)\to L^{\frac65}(\R^3)$ can be
taken to the right-hand side by virtue of the endpoint Strichartz estimate.

To commence with the actual argument, consider the following auxiliary equation, with arbitrary but fixed $\delta>0$, and $P_{d}=\Id-P_{c}$: 
 \EQ{\label{eq:aux}
  i\partial_t Z+ \calH P_{c} Z + i\delta P_{d} Z + a(t) \sigma_3\, P_{c}Z=F 
 }
 with data $Z(0)=\Psi_{0}$.  We claim the Strichartz estimates for general data $Z(0)$, 
 \EQ{\label{eq:Strichaux}
 \| Z \|_{S} \lec \| Z(0) \|_{L^2(\R^3)} + \|F\|_{S^*}
 }
 If so, then $\tilde Z:=P_{c}Z$ satisfies  $\tilde Z(0)=P_{c}\Psi_{0}$ and 
 \EQ{
  i\partial_t \tilde Z+ \calH \tilde  Z  + a(t) \sigma_3\, \tilde Z=  P_{c}F  + a(t)[\sigma_{3}, P_{c}] \tilde Z
 }
 which is the same as the $P_{c}$ projection of \eqref{eq:Psiateq}. Thus, $P_{c}\Psi = \tilde Z$ and~\eqref{eq:Strichaux}
 implies~\eqref{eq:Stricha}.   Let $A(t)=\int_{0}^{t} a(s)\, ds$ and write $U(t)=e^{iA(t)\sigma_{3}}$, $Z(t)=U(t)\Phi$. 
 Then~\eqref{eq:aux}
 becomes 
 \EQ{ \label{eq:PhiF1}
  i\partial_t \Phi+  U^{-1}(\calH P_{c}   + i\delta P_{d}) U \Phi = U^{-1}F + a(t) U^{-1}\sigma_3\, P_{d} U\Phi =: F_{1}
 }
 or, with $\Phi(0)=Z(0)$, 
 \EQ{\label{eq:aux_gauge}
 i\partial_t \Phi+  \calH_{0}\Phi = - U^{-1}(V  - \calH P_{d}   + i\delta P_{d}) U \Phi +  F_{1}
 }
 The matrix operators $\calH_0, V$ are defined via:
\[
 \calH_0 = \left[ \begin{matrix} 
 -\Delta + 1  &0 \\
    0 &  \Delta - 1  
 \end{matrix} \right],\qquad \calH=\calH_0+V
\]
 Choose a smooth, exponentially decaying matrix potential $V_{2}$ which is invertible and such that the operator
 \[
 V_{1}:= (V  - \calH P_{d}   + i\delta P_{d}) V_{2}^{-1}
 \]
 is bounded from $L^{p}\to L^{q}$ for any $1\le p,q \le \I$.   In other words, 
 \EQ{\label{eq:V1V2}
V_{1}V_{2} =  V  - \calH P_{d}   + i\delta P_{d}
} 
 with $V_{1}, V_{2}$ being bounded from $L^{p}\to L^{q}$ for any $1\le p,q \le \I$. By Duhamel the solution to~\eqref{eq:aux_gauge}
 is 
 \EQ{\label{eq:PhiInt}
 \Phi(t) = e^{it\calH_{0}} \Phi(0) -i \int_{0}^{t}  e^{i(t-s)\calH_{0}} [ - U^{-1}V_{1}V_{2} U \Phi +  F_{1} ](s)\, ds
 }
Applying $U(t)$ to both sides yields, since $U$ commutes with the propagator of $\calH_{0}$, 
\EQ{\label{eq:Zint1}
Z(t) = U(t)e^{it\calH_{0}} Z(0) + i \int_{0}^{t}  e^{i(t-s)\calH_{0}} [ U(t) U^{-1}(s)V_{1}V_{2} Z(s) -  U(t)F_{1}(s) ]\, ds
}
 We introduce the operators 
 \EQ{
 T_{0} F(t) &:= V_{2}\int_{0}^{t} e^{i(t-s)\calH_{0}} V_{1} F(s)\, ds \\
 \tilde T_{0} F(t) &:=    V_{2}\int_{0}^{t} e^{i(t-s)\calH_{0}} U(t)U(s)^{-1}V_{1} F(s)\, ds
 }
 By the Strichartz estimates for the free equation, $T_{0}, \tilde T_{0}$ are bounded on $L^{2}_{t,x}$. 
 By \eqref{eq:Zint1}, 
 \EQ{ \label{eq:V2Z}
 V_{2}Z= i\tilde T_{0} V_{2}Z + V_{2} U(t)e^{it\calH_{0}} Z(0) - i V_{2} U(t) \int_{0}^{t}  e^{i(t-s)\calH_{0}} F_{1}(s)\, ds
 }
 Suppose 
 \EQ{\label{eq:T0inverse}
 (\Id - i\tilde T_{0})^{-1} \::\: L^{2}_{t,x} \to  L^{2}_{t,x}
 }
 as a bounded operator. Then \eqref{eq:V2Z} implies via the endpoint Strichartz estimate, see \eqref{eq:PhiF1},  
 \EQ{
 \| V_{2} Z\|_{L^{2}_{t,x}} &\lec    \|Z(0)\|_{2} + \|F_{1}\|_{S^{*}} \lec \|Z(0)\|_{2}+ \|F\|_{S^{*}}+ c_{0}\|V_{2}Z\|_{L^{2}_{t,x}}
 }
 To pass to the final estimate we wrote $P_{d}Z=P_{d}V_{2}^{-1}V_{2}Z$ and used that $P_{d}V_{2}^{-1}$ is bounded by construction. 
 Inserting the resulting bound on $\| V_{2} Z\|_{L^{2}_{t,x}}$  back into \eqref{eq:Zint1} yields the  desired estimate~\eqref{eq:Strichaux}. 
 It therefore remains to prove~\eqref{eq:T0inverse} which will follow from 
 \EQ{\label{eq:T0inverse2}
 (\Id - iT_{0})^{-1} \::\: L^{2}_{t,x} \to  L^{2}_{t,x}
 }
 provided we can show that $\|T_{0}-\tilde T_{0}\|\ll 1$ in the operator norm on $L^{2}_{t,x}$. This, however, follows from the pointwise dispersive
 estimate on $e^{it\calH_{0}} $ which yields 
 \EQ{
  \| V_{2}e^{i(t-s)\calH_{0}} ( U(t)U(s)^{-1}-1) V_{1} F(s) \|_{2} \lec \|a\|_{\infty}^{\frac{1}{4}}\langle t-s\rangle^{-\frac54} \|F(s)\|_{2}
 }
 
Thus, we have reduced ourselves to proving \eqref{eq:T0inverse2}.  We introduce 
 \EQ{
  T_{1} F(t) &:= V_{2} \int_{0}^{t} e^{i(t-s)\calH P_{c} - (t-s)\delta P_{d}} V_{1} F(s)\, ds
  }
 As for the meaning of $T_{1}$, first note that due to commutativity, 
 \EQ{
 e^{it\calH P_{c} - t\delta P_{d}} &= e^{it\calH P_{c}} e^{ - t\delta P_{d}} \\
 &=  \big(e^{it\calH }  P_{c} + P_{d} \big) e^{ - t\delta P_{d}} =  e^{it\calH }  P_{c}  +  e^{ - t\delta P_{d}} P_{d} 
 }
 satisfies Strichartz estimates as in \eqref{eq:Stricha}, see ~\cite{Bec0}, \cite{Bec1}, as well as~\cite{CucMiz}. 
 Second,   the solution to 
 \EQ{
 i\p_{t} Z + \calH P_{c} Z + i\delta P_{d}Z=0
 }
 can be written in two ways:
 \EQ{
 Z(t) &= e^{it\calH P_{c} - t\delta P_{d}} Z(0)\\
 Z(t) &= e^{it\calH_{0}} Z(0) + i\int_{0}^{t} e^{i(t-s)\calH_{0}}  (V-\calH P_{d} + i\delta P_{d}) Z(s)\, ds
 }
  Thus, one further has
 \EQ{
 e^{it\calH P_{c} - t\delta P_{d}} Z(0) = e^{it\calH_{0}} Z(0) + i \int_{0}^{t} e^{i(t-s)\calH_{0}}  (V-\calH P_{d} + i\delta P_{d}) e^{is\calH P_{c} - s\delta P_{d}} Z(0)\, ds
 }
 Therefore, we conclude that
 \EQ{
 T_{0}T_{1} F(t) &= V_{2} \int_{0}^{t} \int_{0}^{s} e^{i(t-s)\calH_{0}}  (V-\calH P_{d} + i\delta P_{d}) e^{i(s-s_1)\calH P_{c} - (s-s_{1})\delta P_{d}} V_{1}F(s_{1})\, ds_{1} ds\\
 &= - iV_{2} \int_{0}^{t} ( e^{i(t-s_1)\calH P_{c} - (t-s_{1})\delta P_{d}} - e^{i(t-s_{1})\calH_{0}} ) V_{1} F(s_{1})\, ds_{1}
 }
 or $T_{0}T_{1}+ i(T_{1}-T_{0})=0$ which implies that
 \EQ{
 (\Id-iT_{0})(\Id+iT_{1})=\Id
 }
 On the other hand, 
 \EQ{
 T_{1}T_{0} F(t) &= V_{2} \int_{0}^{t} \int_{0}^{s}  e^{i(t-s)\calH P_{c} - (t-s)\delta P_{d}} (V-\calH P_{d} + i\delta P_{d}) e^{i(s-s_{1})\calH_{0}} V_{1}F(s_{1})\, ds_{1} ds\\
 &= iV_{2} \int_{0}^{t} \int_{s_{1}}^{t} \p_{s} \big[ e^{i(t-s)\calH P_{c} - (t-s)\delta P_{d}} \; e^{i(s-s_{1})\calH_{0}}\big]\, ds\: V_{1}F(s_{1})\, ds_{1} \\
  &= - iV_{2} \int_{0}^{t} ( e^{i(t-s_1)\calH P_{c} - (t-s_{1})\delta P_{d}} - e^{i(t-s_{1})\calH_{0}} ) V_{1} F(s_{1})\, ds_{1}
 }
  whence $T_{1}T_{0}+ i(T_{1}-T_{0})=0$ which implies that
 \EQ{
 (\Id + iT_{1})(\Id - iT_{0})=\Id
 }
 These identities hold in the algebra of bounded operators on $L^2_{t,x}$, as justified by the endpoint Strichartz estimates. 
Thus \eqref{eq:T0inverse2} holds and \eqref{eq:Stricha} follows.  For~\eqref{eq:Stricha1} one applies a gradient to~\eqref{eq:Psiateq}.

 From \eqref{eq:PhiInt}, we obtain the scattering of $\Phi$ in the following sense:
 \EQ{
 \Phi(t) = e^{it\calH_{0}} \Phi_{\I} + o(1) \quad t\to\I
 }
 in $H^{1}$ for some $\Phi_{\I}\in H^{1}$. Thus,
  \EQ{
 P_{c}\Psi(t) = P_{c}U(t)\big[e^{it\calH_{0}} \Phi_{\I} + o(1)] =  e^{iA(t)\sigma_{3}} e^{it\calH_{0}} \Phi_{\I} + o(1)  \quad t\to\I
 }
 in $H^{1}$,  as claimed.     
  \end{proof}

\section{Some radial Sobolev inequalities} \label{appendix radSob}
For the reader's convenience, we prove some elementary Sobolev-type inequalities for radial functions used in this paper. For any radial smooth function $u(x)=u(r)$ with  compact support on $\R^3$, we have for any $R>0$, 
\EQ{
 \sup_{r>R}|u(r)|^2 \le \int_R^\I|2uu_r|dr \le 2\|u_r\|_{L^2_r(R,\I)}\|u\|_{L^2_r(R,\I)},}
by Cauchy-Schwarz, where $L^2_r$ denotes the $L^2$ space for $r\in\R$ without any weight. Also by partial integration, 
\EQ{ \label{L^2>R bd}
 \int_R^\I|u(r)|^2dr \le \int_R^\I 2|uu_r(r-R)|dr \le 2\|u_r\|_{L^2(R,\I)}\|ru\|_{L^2_r(R,\I)}.}
Plugging the latter estimate into the former one obtains 
\EQ{ \label{L^I>R bd}
 \|u\|_{L^\I_r(R,\I)} \le 2\|u_r\|_{L^2_r(R,\I)}^{3/4}\|ru\|_{L^2_r(R,\I)}^{1/4}.}
Combining the above two estimates yields
\EQ{
 \int_R^\I |u(r)|^4dr \le \|u\|_{L^\I_r(R,\I)}^2\|u\|_{L^2_r(R,\I)}^2
  \le 8\|u_r\|_{L^2_r(R,\I)}^{5/2}\|ru\|_{L^2_r(R,\I)}^{3/2},}
and 
\EQ{
 \int_R^\I |u(r)|^4r^2dr \le \|u\|_{L^\I_r(R,\I)}^2\|ru\|_{L^2_r(R,\I)}^2
  \le 4\|u_r\|_{L^2_r(R,\I)}^{3/2}\|ru\|_{L^2_r(R,\I)}^{5/2}.}
Interpolating the above two, we obtain 
\EQ{ \label{rad Sob sg+}
 \int_R^\I |u(r)|^4rdr \le 4\sqrt{2}\int_R^\I|u_r(r)|^2dr \int_R^\I|u(r)|^2r^2dr.}
By a density argument this estimate extends to any radial $u\in H^1(\R^3)$. Furthermore, 
\EQ{
 \sup_{r>R}|u(r)|^2 \le \int_R^\I|2uu_r|dr \le 2R^{-2}\|ru_r\|_{L^2_r(R,\I)}\|ru\|_{L^2_r(R,\I)},}
and so 
\EQ{ \label{rad Sob sg-}
 \int_R^\I|u(r)|^4r^2dr \le \|u\|_{L^\I_r(R,\I)}^2\|ru\|_{L^2_r(R,\I)}^2
  \le 2R^{-2}\|ru_r\|_{L^2_r(R,\I)}\|ru\|_{L^2_r(R,\I)}^3.}
  
\section{Table of Notation}
{\small
\begin{longtable}{l|l|l}
 \hline 
 symbols & description & defined in \\
 \hline 
 $M(u),E(u)$ & mass and energy & \eqref{eq:MundE} \\
 $\HH$, $\HH^\e$, $\HH^{\e}_{\al}$, $\HH_{(\de)}$  & energy space and subsets &   \eqref{eq:HHe}, \eqref{eq:HHea}, \eqref{def HS} \\
 $Q$, $Q_{\al}$,  $\cS$, $\cS_{\al}$ & solitons, soliton manifolds &   Section~\ref{sec:intro}  \\
 $Q_{\al}'$ & derivative of $Q_{\al}$ in $\al$ & \eqref{eq:Q'def} \\
 \hline 
 $J(u)$, 
 $K(u)$, $G(u)$, $I(u)$ & action and derived functionals  &  \eqref{eq:EJK}, \eqref{def G}  \\
 $\LL$, $L_{+}$, $L_{-}$ & $\R$-linear  linearized Hamiltonian &    \eqref{eq:RlinLdef} \\
 $\GZ_\pm$, $\la_{\pm}$  & unstable/stable modes of $i\LL$ &  \eqref{eq:GZpm} \\
 $C(w)$ & super-quadratic part of $J(u)-J(Q)$ & \eqref{def C}\\
 $\al$, $\theta$ & modulation parameters & \eqref{eq:para} \\
 $\|\cdot\|_{E}^{2}$ & linearized energy norm & \eqref{eq:linEnorm} \\
 $d_{Q}(u)$ & nonlinear distance to $\cS_{1}$ & \eqref{eq:dQdef} \\
 \hline  
 $\delta_{E}$, $\delta_{X}$ & smallness scales for ejection & Lemma~\ref{lem:eject} \\
 $\Sg(u)$ & continuation of $\sign(K(u))$ to $\HH_{(\de)} $ & Lemma~\ref{lem:sign}\\
 $\delta_{S}$ & smallness scale needed for $\Sg(u)$ & Lemma~\ref{lem:sign}   \\
 $\e_*$,  $R_*$ & smallness scales for  $1$-pass theorem & Theorem~\ref{thm:onepass}, \eqref{cond0 Rede*} \\
  \hline
  $v_{n}^{j}$, $u_{n}^{j}$, $\ga_{n}^{k}$ & Bahouri-Gerard decomposition & Section~\ref{sec:scat}\\
  \hline
  $\calH(\al,\ga)$ & matrix Hamiltonian & \eqref{eq:H0} \\
  $P_{c}$, $P_{0}$, $P_{\pm}$ & Riesz projections for $\calH$ &  \eqref{eq:PcP0} \\
  $\M$, $\M_{\al,\ga}$, $\M_{\cS}$ & center stable manifolds & Section~\ref{sec:mf} \\ 
  \hline
  $\xi_{0}$, $\eta_{0}$ & root modes of matrix Hamiltonian & \eqref{eq:xi0eta0} \\
  $G_{\pm}$ & discrete imaginary modes of $\calH$ & Proposition~\ref{prop:spectral} \\
\end{longtable}
}

\section*{Acknowledgments}
The authors thank the referees for useful comments, which in particular simplified the virial argument drastically in the scattering region. They also thank Guixiang Xu for pointing out several misprints. 
The second author was partially supported by a Guggenheim fellowship and the National Science Foundation, DMS--0653841.

\end{document}